\newtheorem{theorem}{Theorem}[section]
\newtheorem{lemma}[theorem]{Lemma}
\newtheorem{corollary}[theorem]{Corollary}
\newtheorem{proposition}[theorem]{Proposition}
        \newtheorem*{claim}{Claim}
\theoremstyle{definition}
\newtheorem{definition}[theorem]{Definition}
\newtheorem{notation}[theorem]{Notation}
\newtheorem{problem}[theorem]{Problem}
\newtheorem{example}[theorem]{Example}
\theoremstyle{remark}
\newtheorem{remark}[theorem]{Remark}
        \newcommand{\func}{\operatorname}
        \newcommand{\bs}{\boldsymbol}   
        \newcommand{\bfx}{\bs{x}}
        \newcommand{\bfxs}{\bs{x}^\ast}
        \newcommand{\bfy}{\bs{y}}
        \newcommand{\bff}{\bs{f}}
        \newcommand{\bfg}{\bs{g}}
        \newcommand{\bfmu}{\bs{\mu}}
        \newcommand{\bftheta}{\bs{\theta}}      
        \newcommand{\whbftheta}{\hat{\bftheta}}    
        \newcommand{\whu}{\hat{u}}                 
        \newcommand{\whtheta}{\hat{\theta}}
        \newcommand{\cinfty}{\CC^{\infty}(0)}
        \newcommand{\dd}{,\ldots,}
        \newcommand{\CC}{\mathbb{C}}
        \newcommand{\ZZ}{\mathbb{Z}}
        \DeclareMathOperator{\ord}{ord}
                \DeclareMathOperator{\Quot}{\func{Quot}}
        \DeclareMathOperator{\R}{\mathcal R}
        \DeclareMathOperator{\J}{\mathcal J}
        \DeclareMathOperator{\cS}{\mathcal S}
        \DeclareMathOperator{\cT}{\mathcal T}
        \DeclareMathOperator{\cF}{\mathcal F}
        \DeclareMathOperator{\cE}{\mathcal E}
        \DeclareMathOperator{\Out}{out}
        \DeclareMathOperator{\In}{in}
        \DeclareMathOperator{\State}{st}
               \newcommand{\linkTo}[2]{\hyperlink{#1}{{\color{RoyalBlue}#2}}}
               \newcommand{\linkFro}[2]{\hypertarget{#1}{{\color{PineGreen}#2}}}
                \newcommand{\glssymbolX}[2]{\linkFro{#1}{#2}}
                \newcommand{\glssymbolY}[2]{\linkTo{#1}{#2}}
\newcounter{desccount}
\newcommand{\descitem}[1]{%
  \item[#1] \refstepcounter{desccount}\label{#1}
}
\newcommand{\descref}[1]{\hyperref[#1]{#1}}
\begin{document}                        


\title{Global Identifiability of Differential Models\footnote{The names of the authors are ordered alphabetically by their last names.}}

\author{Hoon Hong}{North Carolina State University}
\author{Alexey Ovchinnikov}{CUNY Queens College and Graduate Center}
\author{Gleb Pogudin}{Courant Institute of Mathematical Sciences\footnote{Current address of Gleb Pogudin: National Research University Higher School of Economics, Moscow, Russia}}
\author{Chee Yap}{Courant Institute of Mathematical Sciences}





\begin{abstract}
Many real-world processes and phenomena are modeled using systems of
ordinary differential equations with parameters. Given such a system, we say
that a parameter is globally identifiable if it can be uniquely recovered
from input and output data. The main contribution of this paper is to
provide theory, an algorithm, and software for deciding global
identifiability. First, we rigorously derive an algebraic criterion for
global identifiability (this is an analytic property), which yields a
deterministic algorithm. Second, we improve the efficiency by randomizing
the algorithm while guaranteeing  the probability of correctness. 
With our new algorithm, we can tackle problems that could not be tackled
before.
 A software based on the algorithm (called SIAN) is available at \url{https://github.com/pogudingleb/SIAN}.
\end{abstract}

\maketitle   



 \tableofcontents



\section{Introduction}


Many real-world processes and phenomena are modeled using systems of
parametric ordinary differential equations (ODEs). 
There are multiple challenges in designing such a model. In this paper, we
address one of them: structural global identifiability. One might be
interested in knowing the values of some select parameters due to their
importance. Usually, one tries to determine (uniquely identify) them by
collecting input and output data. However, due to the structure of the
model, it can be impossible to determine the parameters from input and
output data. When this happens, the parameters are said to be ``not globally
identifiable''. In the process of model design, it is crucial to know
whether the parameters of interest in a potential model are globally
identifiable, which is the topic of this paper.

Checking global identifiability is challenging. 
Thus, a weaker notion, called ``local identifiability'', has been introduced,
which means that a parameter can be identified up to finitely many options.
There has been remarkable progress in checking local identifiability,
resulting in efficient algorithms, including rigorously justified
probabilistic algorithms (see \cite{HK1977,Sed2002,comparison,KAJ2012} and
the references given there).

If a parameter is not locally identifiable, then it is not globally
identifiable. If it is locally identifiable, then it still remains to check
whether it is globally identifiable. Thus, it is desirable and remains a 
\emph{challenge} to have an algorithm for checking whether a locally
identifiable parameter is also globally identifiable.

Due to the importance of the challenge, there have been intensive effort on
it. Roughly stated, there have been three approaches.

\begin{itemize}
\item Taylor series approaches \cite{P1978} with termination bounds in
several  particular cases \cite{V1984,V1987,MRCW2001}.  Such bounds lead to
algorithms in these particular cases, and these algorithms can be practical
if applied to systems of small sizes.

\item Generating series approaches based on composing the vector fields
associated to  the model equations as well as a recursive approach based on 
integrals \cite{WL1982}.  See \cite{WP1996} for a comparison of the  Taylor
and generating series approaches  as well as recent version of
software GenSSI based on the generating series approach \cite%
{BCAB,CBBC,LFCBBCH}.  Having correct termination criteria for
algorithms based on this approach 
has been
an open problem (see also Example~%
\ref{ex:counterexampleGenSSI}).  

\item Differential algebra-based approaches  
 can be divided in two
groups. One group is to treat the parameters as functions with zero
derivatives and use differential elimination (see, for example, \cite{LG1994}%
). This approach can be practical if applied to systems of small sizes.

The other group is to treat the parameters as elements of the field of
coefficients and produce so-called input-output equations. This approach is
followed in COMBOS and DAISY~\cite{DAISY,DAISY_IFAC,DAISY_MED,COMBOS,MRS2016}
under an additional assumption on ``solvability'' (see \cite[%
Remark~3]{SADA2003} and Example~\ref{ex:DAISYerror}). Having an
efficient algorithm that could verify, for a system, whether this assumption
holds is an open problem.  
One of the
software packages based on this approach, DAISY, uses randomization to
increase efficiency. This can become a randomized algorithm once a
probability analysis is carried out.
\end{itemize}

\noindent Summarizing, there has been significant progress (both in
theory and algorithms) toward the challenge of checking global
identifiability.

The main contribution of this paper is to make further progress by
providing a complete and rigorous theory (Theorems~\ref{thm:alg_criterion},%
\ref{thm:probabilictic},\ref{thm:correctness}) and a symbolic-numeric-randomized algorithm (Algorithm~%
\ref{alg:identifiability}) that is\ general, reliable, and
can 
tackle problems that could not be tackled before using the existing
algorithms and software. The algorithm is ``complete'' in the sense that, for any input that satisfies our syntactically-checkable requirements, it halts with an answer that is correct with any user-chosen probability of correctness (the standard correctness notion in the theory of randomized algorithms). The algorithm is the first one for global identifiability with such guarantees.

We briefly sketch our approach, which could be viewed as a combination of the differential algebra and Taylor series approaches with a correct termination criterion. Informally, identifiability problem can be
formulated as a question about fibers of the map that sends the parameter
values and the initial conditions of the system of ODEs to the output data,
which are functions of the corresponding solution. This observation is
formalized using differential algebra in Proposition~\ref{prop:equiv_fields}%
. One way to analyze this map is to reduce it to a map between
finite-dimensional spaces. We do this by replacing the output functions by
truncations of their Taylor series. Theorem~\ref{thm:alg_criterion} provides
a criterion to find the order of truncation that contains enough information
for the identifiability checking. This criterion can be applied efficiently
using rank computation due to Proposition~\ref{prop:dominant}. After that,
the identifiability question is reduced to the question about the generic
fiber of a map between finite-dimensional varieties.

To significantly increase the efficiency at this step, instead of
considering the generic fiber, we consider a fiber over a randomly chosen
point. We estimate the probability of correctness of such an algorithm in
Theorem~\ref{thm:probabilictic}. We do it by first carefully analyzing the
set of special points of this map and then applying  the
Demillo-Lipton-Schwartz-Zippel lemma  (we believe that our
analysis could be extended to improve other software for global
identifiability such as DAISY so that the output of DAISY would be correct
with user-specified probability $p$). 

After considering the fiber over a
random point, the problem turns into checking the consistency of a system of
polynomial equations and inequations
($\neq$).
This can be performed using symbolic,
symbolic-numeric, or numeric methods. It turns out that, in practice,
Gr\"obner bases computations are efficient enough for moderate-size problems
that we encountered and significantly outperformed (unexpectedly!) numerical
algebraic geometry software such as Bertini~\cite{Bertini} in several
examples that we considered (see \ref{method:4} in Remark~\ref{rem:proj_comp}%
). Thus, in our implementation, we used Gr\"obner bases. Any new method  for 
checking the consistency of a system of polynomial
equations and inequations can be potentially used to make our algorithm even
more efficient.


We have implemented the resulting algorithm into a software called SIAN~\cite{SIAN}, which stands for ``Structural Identifiability ANalyser''.
The software is available at \url{https://github.com/pogudingleb/SIAN}.
The article~\cite{SIAN} focuses on the functionality of the software.
The present paper  focuses on a rigorous theoretical and algorithmic foundation
underlying the software.

The paper is structured as follows. 
In Section~\ref{sec:problem},  we state several notational conventions, 
give a
precise statement of the global identifiability problem 
 and illustrate it by
several examples. In Section~\ref{sec:algebraic}, we give an algebraic
criterion for global identifiability. In Section~\ref{sec:probabilistic}, we
give a probabilistic criterion for global identifiability. In Section~\ref%
{sec:algorithm}, we give an algorithm based on the criteria developed in the
previous two sections. In Section~\ref{sec:examples},
 we discuss
the performance of 
our algorithm using challenging  examples taken from the
literature and  discuss how  other existing software packages perform at those examples.

\section{Identifiability Problem}
\label{sec:problem}

In this section, we give a precise statement of the global identifiability
problem of algebraic differential models. For this, we introduce several
notions. 

\subsection{Conventions}
\label{sec:convention}
Throughout the paper, we will use the following notational conventions.
\begin{enumerate}[(a)]
\item We will use plain face for scalar variables and bold face for vector
variables. We will decorate variables (for instance by $\symbol{94}$ or
\thinspace$\symbol{126}$) to indicate particular values of the variables. For
example:%
\[%
\begin{array}
[c]{lcc}
& \text{scalar} & \text{vector}\\
\text{variable name} & \theta & \bftheta\\
\text{variable value} & \whtheta & \whbftheta%
\end{array}
\]
\item For two sets $A$ and $B$, the notation $A\subset B$ will be used to
denote that $A$ is a subset of $B$ (not necessarily a proper subset). The
notation $A\subsetneq B$ will be used to denote that $A \subset B\ \&\ A\ne B$. 
\item For the convenience of the reader, some of the notation is supplied with hyperlinks, highlighting the {\color{PineGreen}first occurrence} and  {\color{RoyalBlue}subsequent uses} in statements of results and in Definitions, Examples, etc. 
\end{enumerate}
\subsection{Definition of Identifiability}

We will start with stating what type of differential models we will be working with.

\begin{definition}
[Algebraic Differential Model]\label{def:sigma}An
   \glssymbolX{model}{algebraic differential model} is a system
\begin{equation}
\Sigma:=
\begin{cases}
\bs{x}^{\prime} & = \;\bff(\bs{x},\bfmu, u),\\
\bs{y} & =\;\bs{g}(\bs{x},\bfmu,u),\\
\bs{x}(0) & =\;\bs{x}^{\ast},
\end{cases}
\label{eq:main}%
\end{equation}
    where $\glssymbolX{statefunc}{\bff} =(f_{1},\ldots, f_{n})$ and
    $\glssymbolX{outputfunc}{\bfg} =(g_{1},\ldots, g_{m})$ with $f_{i}=f_{i}%
(\bs{x},\bfmu,u)$'s and $g_{j}=g_{j}(\bs{x},\bfmu,u)$'s
being rational functions over the field of complex numbers $\CC$.
\end{definition}

The components of the $n$-vector
        $\glssymbolX{states}{\ensuremath{\bfx}}=(x_{1},\ldots,x_{n})$
        are the state variables; the derivative of $\bfx$
with respect to time is denoted
        by~$\bs{x}^{\prime}$. The scalar function $\glssymbolX{input}{u}$
        is the input variable. The components of the $m$-vector
        $\glssymbolX{outputs}{\ensuremath{\bfy}}=(y_{1} \dd y_{m})$ 
are the output variables. The components of the $\lambda
$-vector $\glssymbolX{systemparams}{\bfmu}=(\mu_{1},\ldots,\mu_{\lambda})$ are the
system parameters. Finally, the components of the $n$-vector
$\glssymbolX{initialstates}{\bfxs}=(x_{1}^{\ast},\ldots,x_{n}^{\ast})$ are the
        initial states.

\begin{remark}
We assume that there is only one input function
 \glssymbolY{input}{\ensuremath{u}}
for a simpler presentation. However, all of our results and proofs can be
generalized straightforwardly to the case of several input functions. If the
input
 \glssymbolY{input}{$u$}
is fixed (known) or
 \glssymbolY{input}{$u$}
just does not appear, then
 \glssymbolY{input}{$u$}
can be simply omitted when our algorithms/theoretical results are used.
\end{remark}

\begin{remark}
In our approach, the initial conditions in~\eqref{def:sigma} form a part of
the parameters, as
in~\cite{CD1980,WL1982,V1983,V1984,LG1994,WP1996,ABDSC2001,MRCW2001,DAISY,comparison,CBBC,COMBOS,villaverde:16}%
, among other works. Other approaches to introducing~\eqref{def:sigma} include
considering inequalities, specified initial conditions, etc., see
e.g.~\cite{CD1980,ABDSC2001,SABDA2001,SADA2003}, among others.
\end{remark}

\begin{notation}
\label{not:1}In stating the notion of identifiability precisely, the following
notions and notation are useful. 
\begin{enumerate}[(a)]
\item We will study the identifiability of both 
\glssymbolY{systemparams}{$\bfmu$}
and
\glssymbolY{initialstates}{$\bfxs$}. Hence we collectively denote them by
\[
        \glssymbolX{parameters}{\bftheta}:= (\glssymbolY{systemparams}{\bfmu},
                \glssymbolY{initialstates}{\bfxs}) = (\theta_{1},\ldots,\theta_{s}),
\]
where $s=\lambda+n$.

\item Let \glssymbolX{analytic}{$\cinfty$} denote the set of all functions that
    are complex analytic in some neighborhood of~$t=0$.

\item We will need to avoid division by zero while evaluating
\glssymbolY{statefunc}{$\bff$}
and
\glssymbolY{outputfunc}{$\bfg$}.
        For this, let
\glssymbolX{regular}{$\Omega$}
        denote the set of all
$(\hat{\bftheta},\hat{u})\in\CC^{s}\times
\glssymbolY{analytic}{\ensuremath{\cinfty}}$
such that none of the denominators of \glssymbolY{statefunc}{$\bff$}
and \glssymbolY{outputfunc}{$\bfg$} in
        \glssymbolY{model}{$\Sigma$}
vanishes at $t=0$ after the substitution of $(\hat{\bftheta},\hat{u})$ into
$( \glssymbolY{parameters}{\bftheta},\glssymbolY{input}{u})$.

\item For $(\hat{\bftheta},\hat{u})\in \glssymbolY{regular}{\ensuremath{\Omega}},\ $let $X(\hat
{\bftheta},\hat{u}),Y(\hat{\bftheta},\hat{u})$ denote
the unique solution of the instance $\Sigma(\hat{\bftheta},\hat{u})$ with entries in \glssymbolY{analytic}{\ensuremath{\cinfty}}, which are functions of $t$ (see \cite[Theorem~2.2.2]{Hille}).

\item We will need to capture precisely the informal notions such as
``almost always''\ and ``generically''.
We will do so by using the notion of Zariski open subset. 
Define
\begin{itemize}
\item A subset $\Theta\subset\CC^s$ is called \emph{Zariski open} if
there is a non-zero polynomial
        $P\in\CC[
\glssymbolY{parameters}{\bftheta}
        ]$ such that
\[
\Theta=\{\whbftheta\in\CC^s\mid P(\whbftheta)\neq0\}.
\]

\item A subset $U\subset \glssymbolY{analytic}{\ensuremath{\cinfty}}$ is called \emph{Zariski open} if there
exist $h \in \ZZ_{\geqslant 0}$ and a non-zero 
polynomial~$P(u_0, u_1, \ldots, u_h)\in\CC[u_0, \ldots, u_h]$ such that
\[
U=\big\{\whu \in \glssymbolY{analytic}{\ensuremath{\cinfty}}\mid P\big(\whu, \whu^{(1)}, \ldots, \whu^{(h)}\big)|_{t=0}\neq0\big\}.
\]
\item Let %
    \glssymbolX{zariskiOpenAlgebraic}{$\tau(\CC^{s})$}
        denote the set of all Zariski open non-empty
    subsets of $\CC^{s}$.
\item Let
    \glssymbolX{zariskiOpenAnalytic}{$\tau(\cinfty)$}
denote the set of all Zariski open non-empty subsets
of \glssymbolY{analytic}{\ensuremath{\cinfty}}.
\end{itemize}

Note that the complement to every nonempty Zariski open set is of Lebesque measure zero~\cite[p.~83]{Neeman}.
Thus, if some property is true on a nonempty Zariski open subset, it is true ``almost always''
in the sense of measure theory.

\item For a parameter $\theta \in
\glssymbolY{parameters}{\bftheta}
$ and a subset $S \subset \CC^s$,
let  $\operatorname{proj}_\theta S$ denote the projection of $S$ 
onto the $\theta$-coordinate, that is,
\[
\operatorname{proj}_\theta S := \big\{\tilde{\theta}_1\:\big|\:\exists\, \tilde{\theta}_2,\ldots,\tilde{\theta}_s  \ \text{such that } (\tilde{\theta}_1,\ldots,\tilde{\theta}_s) \in S\big\},
\]
where we assumed that $\theta = \theta_1$ for notational simplicity.
\end{enumerate}

\end{notation}

\noindent Now we are ready to give a precise definition of identifiability.

\begin{definition}
[Identifiability]\label{def:identifiability}  Let $\Sigma$ be an algebraic differential model. A parameter $\theta\in
\glssymbolY{parameters}{\bftheta}
$ is \emph{globally (resp., locally) identifiable} if
\begin{gather*}
\exists\,\Theta\in
\glssymbolY{zariskiOpenAlgebraic}{\tau(\CC^s)}
\ \ \exists\, U\in 
\glssymbolY{zariskiOpenAnalytic}{\tau(\cinfty)}
\ \ \forall\,(\hat{\bftheta},\hat{u})\in  
\glssymbolY{regular}{\Omega}
    \cap \left(  \Theta\times U\right)  \\  \text{the size
of\ } S_{\theta}\big(  \hat{\bftheta},\hat{u}\big)  \ \text{is one
(resp., finite), }
\end{gather*}
where
\[
S_\theta\big(\hat{\bftheta},\hat{u}\big):=\operatorname{proj}_{\theta} \big\{
\tilde{\bftheta}\:\big|\:({\tilde{\bftheta}},\hat{u})\in \glssymbolY{regular}{\ensuremath{\Omega}}\ \text{
and }\  Y\big(  \hat{\bftheta},\hat{u}\big)  =Y\big(
\tilde{\bftheta},\hat{u}\big)\big\}.
\]
\end{definition}

\begin{remark}
One may notice that the above definition looks a bit different  from (although
    mathematically equivalent to, as we show in
    Proposition~\ref{prop:equiv_fields}) the ones
used in the previous literature~\cite{CD1980,WL1981,WL1982,WP1997,MRCW2001,SADA2003}. Below, we provide more details and explain the
differences and our motivation:
\begin{enumerate}[(a)]
\item ``$\theta\in
\glssymbolY{parameters}{\bftheta}$'':\
Some authors~\cite{V1983,V1984,LG1994,WP1996,MRCW2001,DAISY,MXPW2011} defined the identifiability of all the parameters.
We simplify the presentation by defining the
identifiability of a single parameter. Of course, we can  naturally
extend  it to a
subset of parameters by saying that $\bftheta^{\#}\subset
\glssymbolY{parameters}{\bftheta}$
is globally (resp., locally) identifiable if every
parameter in $\bftheta^{\#}$ is globally (resp., locally) identifiable.
See also Example~\ref{ex:predator_prey}.

\item ``$({\hat{\bftheta}},\hat{u}),\ ({\tilde{\bftheta}},\hat{u})\in
\glssymbolY{regular}{\Omega}$'':
In the previous literature, the values of 
$\glssymbolY{parameters}{\bftheta}, \glssymbolY{input}{u}$
are usually assumed to be chosen
so that the denominators of
    \glssymbolY{statefunc}{$\bff$}
and
    \glssymbolY{outputfunc}{$\bfg$}
are non-zero.
We make the assumption precise by using the ``$\in \glssymbolY{regular}{\ensuremath{\Omega}} $''.

\item ``$\exists\,\Theta\in
\glssymbolY{zariskiOpenAlgebraic}{\tau(\CC^s)}
\ \ \exists\, U\in
\glssymbolY{zariskiOpenAnalytic}{\tau(\cinfty)}$'':
Most of the papers about identifiability we have seen
use the notion ``almost  all/generic''%
\ parameter values or inputs while defining identifiability. In
Examples~\ref{ex:genericity}  and~\ref{ex:genericity2}, 
we remind the reader why it is desirable to allow
such a notion. We make the notion precise by using the phrase
``$\exists\,\Theta\in\glssymbolY{zariskiOpenAlgebraic}{\tau(\CC^s)}\ \ \exists\, U\in\glssymbolY{zariskiOpenAnalytic}{\tau(\cinfty)}$'', that is, to restrict the parameter values and inputs 
 to Zariski open sets.

\item ``$\forall\, \hat{ u}$'': 
In most of the papers about identifiability we have seen 
(for example, \cite{CD1980,WL1982,V1983,V1984,MRCW2001,DAISY,WP1997}), 
the quantification ``$\forall\,\hat{u}$'' is put inside the definition of the set 
$S$ or its analogue 
(see Proposition~\ref{prop:equiv_fields}\ref{item:old_def} for a precise formulation). 
We decided to put ``$\forall\,\hat{u}$'' outside,  due to the following reasons:
\begin{enumerate}[(i)]
\item The two resulting definitions of identifiability, though looking
different, turn out to be equivalent (see Proposition~\ref{prop:equiv_fields}).

\item The new definition putting ``$\forall\, \hat{u}$'' outside could be 
more appealing to the model users because of the following reasons:

\begin{itemize}
\item The definition putting ``$\forall\,\hat{u}$''%
\ inside  makes the model users reason that more than one (in the worst case, infinitely
many) inputs might be needed in order to identify a parameter.

\item The definition using ``$\forall\,\hat{u}$''%
\ outside makes the model users reason that only one generically chosen input will be
sufficient 
to identify a parameter.
\end{itemize}
\end{enumerate}
\end{enumerate}
Summarizing, Definition~\ref{def:identifiability} is mathematically equivalent to
the previous definition in the literature, is precisely stated, and could 
be  more appealing to the
model users.
\end{remark}

\subsection{Problem statement and illustrating examples}

Now we are ready to state the problem of identifiability.
 \begin{problem}[Global Identifiability]
\label{prob:id}\ 

\begin{description}[style=nextline] 

\item[In] 

\begin{enumerate} 

\item[$\Sigma$] :  an
    \glssymbolY{model}{algebraic differential model}
given by rational
functions $\bs{f}(\glssymbolY{states}{\ensuremath{\bfx}},\glssymbolY{systemparams}{\bfmu},\glssymbolY{input}{\ensuremath{u}}),$ and $%
\bs{g}(\glssymbolY{states}{\ensuremath{\bfx}},\glssymbolY{systemparams}{\bfmu},\glssymbolY{input}{\ensuremath{u}})$ 

\item[$\bs{\protect\theta}^\ell$] :  a subset of 
$\glssymbolY{parameters}{\bftheta}
=\glssymbolY{systemparams}{\bfmu}\cup \glssymbolY{initialstates}{\ensuremath{\bfxs}}$ 
 such that every parameter in $\bs{\protect\theta}^\ell$ is  locally identifiable 
\end{enumerate}

\item[Out] 

\begin{enumerate}

\item[$\bs{\protect\theta}^g$] : the set of all globally
identifiable parameters in $\bftheta^\ell$
\end{enumerate}
\end{description}
\end{problem}

\begin{remark}
Note that we require  every parameter in $\bs{\protect\theta}^\ell$ to be locally identifiable. The requirement is not essential for the theory developed in this paper. It is only for the sake of simple  presentation of the theory.
  Furthermore, the resulting algorithm  is not practically restrictive because there are already very efficient algorithms for assessing local identifiability (see, for example,~\cite{Sed2002,KAJ2012,AKJ2012,RKSJT2014}).
\end{remark}

In the following, we will illustrate Problem~\ref{prob:id} on
several simple examples.

\begin{example}
Consider the system

\begin{description}[style=nextline]

\item[In] 

\begin{enumerate}

\item[$\Sigma$] $=  
\begin{cases}
x_1^{\prime }=\theta_1, \\ 
y_1=x_1, \\ 
x_1(0)=\theta_2%
\end{cases}%
$ 
\item[$\bs{\protect\theta}^{\ell}$] $=  \{\theta_1,\theta_2\}$ 
\end{enumerate}

\item[Out] 

\begin{enumerate}

\item[$\bs{\protect\theta}^g$] $=\{\theta_1,\theta_2\}$ 
\end{enumerate}
\end{description}

\medskip\noindent {\sc Reason}: 
We will explicitly show that $\theta_1$ is globally identifiable,
thus also showing that it is locally identifiable (a proof that $\theta_2$
is globally identifiable can be done mutatis mutandi). For this, first note
that $Y(\glssymbolY{parameters}{\bftheta})=\theta_1 t + \theta_2$ and choose $\Theta = 
\CC^2$. For all $(\hat{\theta}_1, \hat{\theta}_2) \in \Theta$, we
have 
\begin{align*}
S_{\theta_1}(\hat{\theta}_1, \hat{\theta}_2) &=\big\{ \tilde{\theta}_1
\in \CC \: \big|\: \exists\, \tilde{\theta}_2 \in \CC \text{
such that } \hat{\theta}_1 t + \hat{\theta}_2 = \tilde{\theta}_1 t + 
\tilde{\theta}_2 \big\} \\
&= \big\{\tilde{\theta}_1 \in \CC\: \big|\: \exists\, \tilde{%
\theta}_2 \in \CC \text{ such that }\hat{\theta}_1 = \tilde{\theta%
}_1, \hat{\theta}_2 = \tilde{\theta}_2 \big\}= \big\{\hat{\theta}_1 %
\big\}.
\end{align*}
Therefore, $|S_{\theta_1}(\hat{\theta}_1, \hat{\theta}_2)|=1$, and thus $\theta_1$ is globally identifiable.
\end{example}

In Examples~\ref{ex:genericity} and~\ref{ex:genericity2}, we will see the importance of genericity (choosing open subsets) that appears in Definition~\ref{def:identifiability}.
\begin{example}\label{ex:genericity}
Consider the system

\begin{description}[style=nextline]

\item[In] 

\begin{enumerate}

\item[$\Sigma$] $=  
\begin{cases}
x_1^{\prime }=\theta_1x_1, \\ 
y_1=x_1, \\ 
x_1(0)=\theta_2%
\end{cases}%
$ 

\item[$\bs{\protect\theta}^{\ell}$] $=  \{\theta_1,\theta_2\}$ 
\end{enumerate}

\item[Out] 

\begin{enumerate}

\item[$\bs{\protect\theta}^g$] $=\{\theta_1,\theta_2\}$ 
\end{enumerate}
\end{description}

\medskip\noindent {\sc Reason}: 
We will explicitly show that $\theta_1$ is globally identifiable,
thus also showing that it is locally identifiable (a proof that $\theta_2$
is globally identifiable can be done mutatis mutandi). For this, first note
that $Y(\glssymbolY{parameters}{\bftheta})=\theta_2e^{\theta_1 t} $ and choose $\Theta = 
\big\{(z_1,z_2)\in\CC^2\:\big|\: z_2\ne 0\big\}$. For all $(\hat{\theta}_1, \hat{\theta}_2) \in \Theta$, we
have 
\begin{align*}
S_{\theta_1}(\hat{\theta}_1, \hat{\theta}_2) &=\big\{ \tilde{\theta}_1
\in \CC \: \big|\: \exists\, \tilde{\theta}_2 \in \CC \text{
such that }  \hat{\theta}_2e^{\hat{\theta}_1 t}  = \tilde{\theta}_2e^{\tilde{\theta}_1 t} 
 \big\} \\
&= \big\{\tilde{\theta}_1 \in \CC\: \big|\: \exists\, \tilde{%
\theta}_2 \in \CC \text{ such that }\hat{\theta}_1 = \tilde{\theta%
}_1, \hat{\theta}_2 = \tilde{\theta}_2 \big\}= \big\{\hat{\theta}_1 %
\big\}.
\end{align*}
Therefore, $|S_{\theta_1}(\hat{\theta}_1, \hat{\theta}_2)|=1$, and so $\theta_1$ is globally identifiable according to Definition~\ref{def:identifiability}.

However, note that choosing $\Theta = 
                \CC^2$ would result in requiring us to consider $S_{\theta_1}(\hat{\theta}_1, 0)=\CC$, which is infinite. Therefore, if the genericity (restriction to open sets) for $\hat{\bftheta}$ in Definition~\ref{def:identifiability} were not used, then this system would not be globally/locally identifiable.
\end{example}


\begin{example}\label{ex:genericity2}

Consider the system

\begin{description}[style=nextline]

\item[In] 

\begin{enumerate}

\item[$\Sigma$] $=  
\begin{cases}
  x_1^{\prime }=\theta_1\glssymbolY{input}{\ensuremath{u}}, \\ 
  y_1=x_1, \\ 
  x_1(0)=\theta_2%
\end{cases}%
$ 

\item[$\bs{\protect\theta}^{\ell}$] $=  \{\theta_1\}$ 
\end{enumerate}

\item[Out] 

\begin{enumerate}

\item[$\bs{\protect\theta}^g$] $=\{\theta_1\}$ 
\end{enumerate}
\end{description}

\medskip\noindent {\sc Reason}: Note
that $Y(\glssymbolY{parameters}{\bftheta}, \glssymbolY{input}{\ensuremath{u}})' = \theta_1 \glssymbolY{input}{\ensuremath{u}}$ and choose 
$\Theta = \CC^2$ and $U = \{ \hat{u}\in \glssymbolY{analytic}{\ensuremath{\cinfty}} \: |\: \hat{u}(0) \neq 0\}$. 
For all $(\hat{\theta}_1, \hat{\theta}_2) \in \Theta$ and $\hat{u} \in U$, we have 
\begin{align*}
S_{\theta_1}(\hat{\theta}_1, \hat{\theta}_2, \hat{u}) &= 
\big\{ \tilde{\theta}_1 \in \CC \: \big|\: \exists\, \tilde{\theta}_2 \in \CC \text{
such that }  Y(\hat{\bftheta}, \hat{u}) = Y(\tilde{\bftheta}, \hat{u})
 \big\} \\
&\subset \big\{ \tilde{\theta}_1 \in \CC \: \big|\: \exists\, \tilde{\theta}_2 \in \CC \text{
such that }  Y(\hat{\bftheta}, \hat{u})' = Y(\tilde{\bftheta}, \hat{u})' \big\}\\
&\subset \big\{ \tilde{\theta}_1 \in \CC \: \big|\: \hat{\theta}_1\hat{u}(0) = \tilde{\theta}_1\hat{u}(0) \big\} = \big\{\hat{\theta}_1\big\}.
\end{align*}
Therefore, $|S_{\theta_1}(\hat{\theta}_1, \hat{\theta}_2, \hat{u})|=1$, 
and so $\theta_1$ is globally identifiable according to Definition~\ref{def:identifiability}.

                However, note that choosing $U = \glssymbolY{analytic}{\ensuremath{\cinfty}}$ would result in requiring us to consider 
$S_{\theta_1}(\hat{\bftheta}, 0) = \CC$, which is infinite. 
Therefore, if the genericity (restriction to open sets) for $\hat{u}$ in Definition~\ref{def:identifiability} were not used, then this system would not be globally/locally identifiable.
\end{example}


\begin{example}
Consider the system

\begin{description}[style=nextline]

\item[In] 

\begin{enumerate}

\item[$\Sigma$] $=  
\begin{cases}
x_1^{\prime }=\theta_1^2, \\ 
y_1=x_1, \\ 
x_1(0)=\theta_2%
\end{cases}%
$ 

\item[$\bs{\protect\theta}^{\ell}$] $=  \{\theta_1,\theta_2\}$ 
\end{enumerate}

\item[Out] 

\begin{enumerate}

\item[$\bs{\protect\theta}^g$] $=\{\theta_2\}$ 
\end{enumerate}
\end{description}

\medskip\noindent {\sc Reason}: To see that $\theta_1$ is locally identifiable, note that $Y(\glssymbolY{parameters}{\bftheta})=\theta_1^2 t + \theta_2$ and choose $\Theta = \CC%
^2$. For all $(\hat{\theta}_1, \hat{\theta}_2) \in \Theta$, 
\begin{align*}
&S_{\theta_1}(\hat{\theta}_1, \hat{\theta}_2) \\
=& \big\{ \tilde{\theta}_1
\in \CC \:\big|\:\exists\, \tilde{\theta}_2 \in \CC \text{
such that } \hat{\theta}_1^2 t + \hat{\theta}_2 = \tilde{\theta}_1^2 t + 
\tilde{\theta}_2 \big\} \\
=& \big\{\tilde{\theta}_1 \in \CC \: \big|\:\exists\, \tilde{%
\theta}_2 \in \CC \text{ such that } \hat{\theta}_1^2 = \tilde{%
\theta}_1^2, \hat{\theta}_2 = \tilde{\theta}_2\big\} \\
=& \big\{\hat{\theta}%
_1, -\hat{\theta}_1\big\}.
\end{align*}
Therefore, $|S_{\theta_1}(\hat{\theta}_1, \hat{\theta}_2)|\leqslant 2$.
Similarly, we conclude that $|S_{\theta_2}(\hat{\theta}_1, \hat{\theta}_2)|=
1$, thus showing that $\bftheta^{\ell} = \{\theta_1,\theta_2\}$ and $%
\theta_2\in \bftheta^{g}$.

To see that $\theta_1 \notin \bftheta^{g}$, let $\Theta\subset 
\CC^2$ be non-empty open such that, for all $(\hat{\theta}_1, \hat{%
\theta}_2) \in \Theta$, $|S_{\theta_1}(\hat{\theta}_1, \hat{\theta}_2)|=1$.
Since, for all $(\whtheta_1,\whtheta_2)\in\CC^2$, $\whtheta_1 \neq 0$
implies $|S_{\whtheta_1}(\whtheta_1, \whtheta_2)| = 2$, 
\[
\Theta\subset \big\{ (\whtheta_1, \whtheta_2)\in\CC^2 \mid
\whtheta_1=0\big\},
\]
so $\Theta$ cannot be a non-empty open set.
\end{example}

In the above elementary examples (see also Examples~\ref%
{ex:DAISYerror} and~\ref{ex:counterexampleGenSSI}), we were able to solve
the 
 ODEs
explicitly and decide the identifiability directly. We now give
an example with a system to which explicit solutions are not known to us but
we are still able to check identifiability using the algorithm from this
paper. The example also illustrates  that our approach  allows working with {\em\ proper subset} of
the parameters.

\begin{example}\label{ex:predator_prey}
Consider the system (a predator-prey model)

\begin{description}[style=nextline]

\item[In] 

\begin{enumerate}

\item[$\Sigma$] $=  
\begin{cases}
x^{\prime }_1 = \theta_1 x_1 - \theta_2 x_1x_2, \\ 
x^{\prime }_2 = -\theta_3x_2 + \theta_4x_1x_2, \\ 
y_1=x_1, \\ 
x_1(0)=\theta_5, \\ 
x_2(0)=\theta_6.%
\end{cases}%
$ 

\item[$\bs{\protect\theta}^{\ell}$] $= 
\{\theta_1,\theta_3,\theta_4,\theta_5\}$ 
\end{enumerate}

\item[Out] 
\begin{enumerate}

\item[$\bs{\protect\theta}^g$] $=
\{\theta_1,\theta_3,\theta_4,\theta_5\}$ 
\end{enumerate}

\end{description}

\medskip\noindent {\sc Reason}: 
This has been determined by Algorithm~\ref{alg:identifiability} (see Section~\ref{sec:algorithm}).
Additionally, a calculation based on differential elimination terminates
with representations for $\theta_1, \theta_3, \theta_4$ that are
consequences of
        $\Sigma$
of the form 
\[
\theta_i = \frac{P_i(y, \ldots, y^{(5)})}{Q_i(y, \ldots, y^{(5)})}, \quad 
\text{for } i = 1, 3, 4.
\]
Note that, if the values of $\theta_1$ and $\theta_3$ are known, we can also
write in a simpler form: 
\[
\theta_4 = \frac{\theta_1\theta_3y^2 - \theta_3yy^{\prime }- y^{\prime
\prime }y + y^{\prime 2}}{\theta_1 y^3 - y^{\prime 2}}.
\]
One can also show directly by definition that neither $\theta_2$ nor $%
\theta_6$ is locally identifiable.
        This result tells us that, if we can only
        observe the prey population ($x_1$), then it is impossible
        to identify the rate of decrease ($\theta_2$) of the prey population
        due to the prey-predator interactions ($x_1x_2$).
        But we can identify the rate of increase ($\theta_4$) of the
        predator population.
\end{example}

\subsection{Subtleties with other approaches}

In this section, we will consider a few subtleties in applying DAISY and
GenSSI, two existing software packages to test for global identifiability (for more details on them, see the beginning of Section~\ref{sec:examples}).

\begin{example}
\label{ex:DAISYerror}Consider the system

\begin{description}[style=nextline]

\item[In] 

\begin{enumerate}

\item[$\Sigma$] $=  
\begin{cases}
x_1^{\prime }=0, \\ 
y_1=x_1, \\ 
y_2=\theta_1x_1+\theta_1^2, \\ 
x_1(0)=\theta_2.%
\end{cases}%
$ 

\item[$\bs{\protect\theta}^{\ell}$] $=  \{\theta_1,\theta_2\}$ 
\end{enumerate}

\item[Out] 
\begin{enumerate}

\item[$\bs{\protect\theta}^g$] $=\{\theta_2\}$ 
\end{enumerate}
\end{description}

\medskip\noindent {\sc Reason}: 
To see that $\theta_1$ is locally identifiable, note that 
\begin{equation*}
Y(\glssymbolY{parameters}{\bftheta}) =  
\begin{pmatrix}
\theta_2 \\ 
\theta_1 \theta_2 + \theta_1^2%
\end{pmatrix}
\end{equation*}
and $\Omega = \CC^2$ and choose $\Theta=\CC^2$. For all $(\hat{%
\theta}_1, \hat{\theta}_2) \in \Theta$, we have 
\begin{align}  \label{eq:Sab3}
S_{\theta_1}(\hat{\theta}_1, \hat{\theta}_2) &= \big\{ \tilde{\theta}_1
\in\CC\:\big|\: \exists\, \tilde{\theta}_2\in\CC \text{ s.t.
 } \tilde{\theta}_2 = \hat{\theta}_2, \tilde{\theta}_1 
\tilde{\theta}_2 + \tilde{\theta}_1^2 = \hat{\theta}_1 \hat{\theta}%
_2 + \hat{\theta}_1^2 \big\} \\
&=\big\{ \tilde{\theta}_1 \in\CC\:\big|\: \tilde{\theta}_1 
\hat{\theta}_2 + \tilde{\theta}_1^2 = \hat{\theta}_1 \hat{\theta}_2 + 
\hat{\theta}_1^2 \big\} = \big\{\hat{\theta}_1,-\hat{\theta}_1 - \hat{\theta}%
_2\big\},  \notag
\end{align}
and so $|S_{\theta_1}(\hat{\theta}_1, \hat{\theta}_2)|\leqslant 2$.
Similarly, we conclude that $|S_{\theta_2}(\hat{\theta}_1, \hat{\theta}_2)|=
1$, thus showing that $\bftheta^{\ell} = \{\theta_1,\theta_2\}$ and $%
\theta_2\in \bftheta^{g}$.

To see that $\theta_1$ is not globally identifiable, let $\Theta \subset 
\CC^2$ be non-empty open such that, for all $(\hat{\theta}_1, \hat{%
\theta}_2) \in \Theta$, $|S_{\theta_1}(\hat{\theta}_1, \hat{\theta}_2)|=1$.
Formula \eqref{eq:Sab3} implies that, for all $(\whtheta_1,\whtheta_2)\in\mathbb{%
C}^2$,  
\begin{equation*}
|S_{\theta_1}(\whtheta_1, \whtheta_2)| = 2\iff\whtheta_1 \neq - \whtheta_1 -
\whtheta_2.
\end{equation*}
Therefore, 
\begin{equation*}
\Theta\subset\big\{ (\whtheta_1, \whtheta_2) \in\CC^2\mid \whtheta_1 =
-\whtheta_1 - \whtheta_2 \big\},
\end{equation*}
so $\Theta$ cannot be non-empty open.

However, the software tool DAISY (see~\cite{DAISY,DAISY_MED,DAISY_IFAC}),
which is based on theory that uses 
input-output equations, returned that $\theta_1$ is \emph{globally identifiable}. A
possible explanation for this is as follows. 
The equation 
\begin{equation}  \label{eq:io}
y_2 - \theta_1y_1 - \theta_1^2 = 0,
\end{equation}
which is a consequence of $\Sigma$, is an input-output equation (see \cite[%
formula~(11)]{SADA2003}) in this case. The approach based on input-output
equations assumes that the coefficients of input-output equations as
polynomials with respect to the $y$'s and $u$'s and their derivatives are globally
identifiable (such an additional condition is called ``solvability'' in \cite%
[Remark~3]{SADA2003}). In this example, however, this condition does not hold.
Indeed, if the coefficients of~\eqref{eq:io}, $1$, $-\theta_1$, and $%
-\theta_1^2$, 
were globally identifiable, 
$\theta_1$ 
would be identifiable. 
However, we showed above that the values of $y_1$ and $y_2$ are not
sufficient for a unique recovery of the value of $\theta_1$ in the generic
case.
\end{example}

\begin{remark}
One can modify Example~\ref{ex:DAISYerror} so that so that the solutions of the state variables are non-constant but with
the same conclusion about ``solvability'' and the output of DAISY, e.g., by
considering 

\begin{enumerate}

\item[$\Sigma$] $=  
\begin{cases}
x_1^{\prime }=1, \\ 
x_2^{\prime }=-1, \\ 
y_1=x_1+x_2, \\ 
y_2=\theta_1(x_1+x_2)+\theta_1^2, \\ 
x_1(0)=\theta_2, \\ 
x_2(0)=\theta_3.%
\end{cases}%
$ 
\end{enumerate}
The same argument as in Example~\ref{ex:DAISYerror} shows that $\theta_1$ is locally but not globally identifiable, while DAISY would output that $\theta_1$ is globally identifiable.
\end{remark}

\begin{example}
\label{ex:counterexampleGenSSI} Consider the system

\begin{description}[style=nextline]

\item[In] 

\begin{enumerate}

\item[$\Sigma$] $=  
\begin{cases}
x_1^{\prime }= x_1, \\ 
y_1 = x_1, \\ 
y_2 = \theta_1 + \theta_1^2 x_1, \\ 
x_1(0)=\theta_2%
\end{cases}%
$ 

\item[$\bs{\protect\theta}^{\ell}$] $=  \{\theta_1,\theta_2\}$ 

\end{enumerate}

\item[Out] 
\begin{enumerate}

\item[$\bs{\protect\theta}^g$] $=\{\theta_1,\theta_2\}$ 
\end{enumerate}
\end{description}

\medskip\noindent {\sc Reason}: 
First note that  
\begin{equation*}
Y(\glssymbolY{parameters}{\bftheta})=%
\begin{pmatrix}
\theta_2\cdot e^t \\ 
\theta_1+\theta_1^2\theta_2\cdot e^t%
\end{pmatrix}
.
\end{equation*}
To show that $\theta_2$ is \emph{globally identifiable}, choose $\Theta = 
\CC^2$. For all $(\hat{\theta}_1, \hat{\theta}_2) \in \Theta$, we
have  
\[
S_{\theta_2}(\hat{\theta}_1,\hat{\theta}_2)\subset\big\{\tilde{\theta}%
_2\in\CC\:|\: \exists\,\tilde{\theta}_1\in\CC\text{ such
that }\hat{\theta}_2\cdot e^t=\tilde{\theta}_2\cdot e^t\big\} =\big\{ 
\hat{\theta}_2\big\}.
\]
Therefore, $|S_{\theta_2}(\hat{\theta}_1,\hat{\theta}_2)|\leqslant 1$. To
show that $\theta_1$ is \emph{globally identifiable}, choose $\Theta = 
\CC^2$. For all $(\hat{\theta}_1, \hat{\theta}_2) \in \Theta$, we
have  
\begin{align*}
& S_{\theta_1}(\hat{\theta}_1,\hat{\theta}_2)\\
=&\left\{\tilde{\theta}_1\in%
\CC\:\left|\: \exists\,\tilde{\theta}_2\in\CC\text{ such that }%
\begin{aligned}
&\hat{\theta}_2\cdot e^t=\tilde{\theta}_2\cdot e^t,\\ 
&\hat{\theta}_1+%
\hat{\theta}_1^2\cdot\hat{\theta}_2\cdot e^t= \tilde{\theta}_1+%
\tilde{\theta}_1^2\cdot\tilde{\theta}_2\cdot e^t 
\end{aligned}
\right.\right\}
 \\
=&\big\{\tilde{\theta}_1\in\CC\:|\:\hat{\theta}_1+\hat{\theta}%
_1^2\cdot\hat{\theta}_2\cdot e^t= \tilde{\theta}_1+\tilde{\theta}%
_1^2\cdot\hat{\theta}_2\cdot e^t \big\} \\
=&\big\{\tilde{\theta}_1\in\CC\:|\:\hat{\theta}_1= \tilde{%
\theta}_1\ \&\ \hat{\theta}_1^2\cdot\hat{\theta}_2= \tilde{\theta}%
_1^2\cdot\hat{\theta}_2 \big\}\\
=&\big\{\hat{\theta}_1\big\}.
\end{align*}
Therefore, $|S_{\theta_2}(\hat{\theta}_1,\hat{\theta}_2)|= 1$.

However, the software GenSSI~2.0 (see \cite{BCAB,CBBC,LFCBBCH}) has returned
that $\theta_1$ is \emph{locally identifiable} and is not able to conclude
the global identifiability because of limitations of the method that is used
in the software. More precisely, GenSSI~2.0 checks that the Jacobian of 
\begin{equation}  \label{eq:y0sys}
\begin{cases}
y_1(0) = \theta_2, \\ 
y_2(0) = \theta_1 + \theta_1^2 \theta_2%
\end{cases}%
\end{equation}
w.r.t. $\theta_1$ and $\theta_2$, which is 
\begin{equation*}
\begin{pmatrix}
0 & 1 \\ 
1+2\theta_1\theta_2 & \theta_1^2%
\end{pmatrix}%
, 
\end{equation*}
has rank generically equal to the number of parameters. Having positively
checked this equality, following the steps outlined in ``Electronic
supplementary material'' of \cite{BCAB} as well as by running the code,
GenSSI~2.0 does not differentiate $y_1 = x_1, y_2 = \theta_1 + \theta_1^2 x_1
$ and checks whether $\theta_1$ and $\theta_2$ are uniquely determined by $%
y_1(0)$ and $y_2(0)$ in~\eqref{eq:y0sys}. GenSSI~2.0 observes
that $\theta_1$ is not uniquely determined by $y_1(0)$ and $y_2(0)$ from~%
\eqref{eq:y0sys} and thus outputs that $\theta_1$ is only locally
identifiable, while $\theta_2$ is uniquely determined by $y_1(0)$ and so is
globally identifiable.

We will show how to remedy this limitation in checking global
identifiability for this example based on the above particular stopping
criterion used in GenSSI. It turns out that one additional derivative $%
y_2^{\prime }= \theta_1^2 x_1^{\prime }= \theta_1^2 x_1$ of $y_2 = \theta_1
+ \theta_1^2 x_1$ is sufficient to conclude the global identifiability of $%
\theta_1$ in addition to using~\eqref{eq:y0sys}: 
\begin{equation*}
\theta_1= y_2(0)-\theta_1^2\cdot y_1(0)=y_2(0)-\theta_1^2\cdot
x_1(0)=y_2(0)-y_2^{\prime }(0).
\end{equation*}
\end{example}


\section{Algebraic Criteria}

\label{sec:algebraic} 

In this section, the analytic definition of global identifiability from the
previous section will be characterized algebraically. First, we provide an
equivalence in terms of field extensions in Proposition~\ref%
{prop:equiv_fields} of Section~\ref{sec:equiv}. This section culminates in
Theorem~\ref{thm:alg_criterion} of Section~\ref{subsec:alg_crit}, which gives
a constructive algebraic criterion for global identifiability.


\subsection{Basic Terminology}
\label{sec:notation} 
A derivation $\delta$ on a commutative ring $R$ is a map $\delta: R\to R$
such that, for all $a,b \in R$, 
\begin{equation}  \label{eq:derprop}
\delta(a+b) = \delta(a)+\delta(b),\quad\delta(ab)=\delta (a)b+a\delta(b).
\end{equation}
Call $(R,\delta)$ a differential ring.
For a domain $R$, $\glssymbolX{quot}{\Quot}(R)$ denotes the field of fractions
of $R$. The
derivation $\delta$ can be extended uniquely to a derivation on
$\glssymbolY{quot}{\Quot}(R)$
using the quotient rule. An ideal $I \subset R$ is said to be 
 a differential ideal
if $a^{\prime }\in I$ for every $a \in I$. 

The 
ring of differential polynomials
in $z_1\dd z_s$ with coefficients in $\CC$ is denoted by $\CC\{z_1\dd z_s\}$.
As a ring, it is the ring of polynomials in the algebraic indeterminates 
        \begin{equation*}
        z_{1},\ldots,z_{s},z_{1}^{\prime},\ldots,z_{s}^{\prime},
            z_{1}^{\prime\prime}
        ,\ldots,z_{s}^{\prime\prime},\ldots,z_{1}^{(q)},\ldots,
            z_{s}^{(q)},\ldots. 
        \end{equation*}
A differential ring structure is defined by, for all $i$, $1 \leqslant
i\leqslant s$ and $q \geqslant0$, 
        \begin{equation*}
        \big( z_{i}^{(q)}\big) ^{\prime}:= z_{i}^{(q+1)},\quad z_{i}^{(0)}
            := z_{i}
        \end{equation*}
and extended to $\CC\{z_{1},\ldots,z_{s}\}$ by the Leibniz rule,
additivity (see~\eqref{eq:derprop}) and $c^{\prime}= 0$ for all $c \in%
\CC$. For example, 
        \begin{equation*}
        \left( 2z_{1}^{\prime2}z_{3} + 3z_{5}^{\prime\prime}\right) ^{\prime}=
        4z_{1}^{\prime}z_{1}^{\prime\prime}z_{3}+2z_{1}^{\prime2}z_{3}^{%
        \prime}+3z_{5}^{\prime\prime\prime}. 
        \end{equation*}
For all $i$, $1\leqslant i \leqslant s$, and $P \in
 \CC\{z_1\dd z_s\}$,
we define
$\glssymbolX{difforder}{\ord}_{z_i}P$
to be the largest integer $q$
such that $z_i^{(q)}$ appears in $P$ if such a $q$ exists and $-1$ if such
a $q$ does not exist. For non-empty $S, T \subset R$, we define 
        \begin{equation*}
           {T:S^\infty}
            = \{r\in R\:|\:\text{there exist } s\in S \text{ and } n\in%
                \ZZ_{\geqslant 0}\text{ such that } s^nr \in T\}. 
        \end{equation*}
If $T$ is an ideal of $R$, then $T:S^\infty$ is an ideal of $R$. For subsets 
$X \subset \CC^n$ and $J \subset \CC[x_1,\ldots,x_n]$, we
denote 
        \begin{align*}
            \glssymbolX{ideal}{I}(X)
                &:= \{f\in \CC[x_1,\ldots,x_n]\:|\: f(\bs{p})=0\text{
                for all } \bs{p}\in X\}, \\
            \glssymbolX{zero}{Z}(J)
                &:= \{\bs{p} \in \CC^n\:|\: f(\bs{p})=0\text{
                for all } f \in J\}.
        \end{align*}


\subsection{Algebraic Preparation}
\label{subsec:alg_prep}


\label{subsec:fields} We will start with introducing technical notation and
proving several auxiliary statements in this subsection.

Fix 
\glssymbolY{model}{$\Sigma$}
from Definition~\ref{def:sigma} (see Notation~\ref{not:1} as
well). Let
\glssymbolX{Q}{$Q$}
be the lcm of all denominators in
    \glssymbolY{statefunc}{$\bff$}
and
    \glssymbolY{outputfunc}{$\bfg$}.
Then all these rational functions can be written as 
        \begin{equation*}
        f_{i}=\frac{ F_{i} }{\glssymbolY{Q}{\ensuremath{Q}}}\text{ for }1\leqslant i\leqslant n,\text{ and
            }g_{j}=
        \frac{G_{j}}{\glssymbolY{Q}{\ensuremath{Q}}}\text{ for }1\leqslant j\leqslant m. 
        \end{equation*}
Denote
\glssymbolX{F}{$\bs{F}$}
$:= (F_1,\ldots, F_n)$             
and
\glssymbolX{G}{$\bs{G}$}
$:= (G_1,\ldots, G_m)$.           
Let 
        \begin{equation}  \label{eq:defR}
            \glssymbolX{R}{\R}
            := \CC[\glssymbolY{systemparams}{\bfmu}] \{ \glssymbolY{states}{\ensuremath{\bfx}}, \glssymbolY{outputs}{\ensuremath{\bfy}}, \glssymbolY{input}{\ensuremath{u}}\}
        \end{equation}
considered as a differential ring with $\glssymbolY{systemparams}{\bfmu}^{\prime }=0$ (which is set up
this way due to the time-independence of the
parameters). 
For all $i$, $1\leqslant i\leqslant n$ and $j$, $1\leqslant
j\leqslant m$, we consider $F_i$, $G_j$, and \glssymbolY{Q}{\ensuremath{Q}} as elements of $\CC[%
\glssymbolY{systemparams}{\bfmu},\glssymbolY{states}{\ensuremath{\bfx}}, \glssymbolY{input}{\ensuremath{u}}]\subset \glssymbolY{R}{\R}$. Let 
 \begin{equation}  \label{eq:defJ}
     \glssymbolX{J}{\J}
                = \left\{r \in \glssymbolY{R}{\R}\:|\:\forall ((\hat{\bfmu},
            \hat{\bs{x}}^\ast),\hat{u})\in\glssymbolY{regular}{\ensuremath{\Omega}}\ \  
            r\big( \hat{\bfmu},
            X\big((\hat{\bfmu},\hat{\bs{x}}^\ast),
            \hat u\big),Y\big((\hat{\bfmu},\hat{\bs{x}}^\ast),
            \hat u\big), \hat{u}\big) =0 \right\},
        \end{equation}
which is a (differential) ideal in
            \glssymbolY{R}{$\R$}.
For all $((\hat{\bfmu},%
\hat{\bs{x}}^\ast),\hat{u})\in\glssymbolY{regular}{\ensuremath{\Omega}}$, we have 
\begin{equation*}  \label{eq:Qne0}
\glssymbolY{Q}{\ensuremath{Q}}(\hat{\bfmu},\hat{\bs{x}}^\ast,\hat{u})\ne 0.
\end{equation*}
Therefore, for every $P\in \glssymbolY{R}{\R}$, 
\begin{equation}  \label{eq:dividebyQ}
\glssymbolY{Q}{\ensuremath{Q}} \cdot P \in \glssymbolY{J}{\J} \implies P \in \glssymbolY{J}{\J}.
\end{equation}

\begin{lemma}
\label{lem:empty_intersection} 
        $\glssymbolY{J}{\J}\cap
                \CC [\glssymbolY{systemparams}{\bfmu},
                \glssymbolY{states}{\ensuremath{\bfx}}]
                \{ \glssymbolY{input}{\ensuremath{u}}\} = \{0\}$.
\end{lemma}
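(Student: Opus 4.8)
The plan is to show that if a nonzero differential polynomial $P \in \CC[\bfmu, \bfx]\{u\}$ lay in $\J$, it would have to vanish identically on a certain Zariski-dense set of instances, forcing $P = 0$. Concretely, fix such a $P$ and write $h = \ord_u P$, so $P$ involves only $u, u', \ldots, u^{(h)}$ together with the $\mu_i$'s and the $x_j$'s and their derivatives up to some order. The key observation is that, for a generic choice of $(\hat{\bfmu}, \hat{\bfxs}) \in \CC^s$ and a generic analytic input $\hat u$, the values at $t=0$ of the quantities
\[
\hat{\bfmu}, \quad X(\hat{\bftheta}, \hat u)(0), \quad X'(\hat{\bftheta},\hat u)(0), \ldots, \quad \hat u(0), \hat u'(0), \ldots, \hat u^{(h)}(0)
\]
can be prescribed essentially arbitrarily. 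Indeed $X(\hat{\bftheta},\hat u)(0) = \hat{\bfxs}$ is already free, and the higher derivatives of $X$ at $0$ are obtained by repeatedly differentiating $\bfx' = \bff(\bfx,\bfmu,u)$, so they are rational functions of $\hat{\bfmu}$, $\hat{\bfxs}$, and the jet $(\hat u(0), \hat u'(0), \ldots)$ of the input; since $\hat u$ ranges over all of $\cinfty$, its jet at $0$ is an arbitrary sequence in $\CC$. Hence, as $((\hat{\bfmu},\hat{\bfxs}),\hat u)$ runs over $\Omega$, the induced evaluation point of $P$ ranges over a Zariski-dense subset of the affine space in the variables $\bfmu, \bfx, \bfx', \ldots, u, u', \ldots, u^{(h)}$.

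The second step turns this density statement into the conclusion. Since $P \in \J$, by definition $P$ evaluates to $0$ at every point arising this way, so $P$ vanishes on a Zariski-dense set; being a polynomial, $P = 0$, contradiction. To make the density rigorous I would proceed as follows: choose $\hat u$ to be a polynomial in $t$ of degree $h$ with arbitrary coefficients $c_0, \ldots, c_h$ (so $\hat u^{(k)}(0) = k!\,c_k$), and treat $\hat{\bfmu}, \hat{\bfxs}, c_0, \ldots, c_h$ as free coordinates on $\CC^{s+h+1}$. The map sending these to $\big(\hat{\bfmu}, \hat{\bfxs}, X'(0), X''(0), \ldots, X^{(N)}(0), \hat u(0), \ldots, \hat u^{(h)}(0)\big)$ — where $N$ is the largest order of an $x$-derivative in $P$ — is a rational map whose image, I claim, is Zariski dense in the target: the components $\hat{\bfmu}$, $\hat{\bfxs} = X(0)$, and $\hat u^{(k)}(0)$ are literally coordinate projections, and one checks inductively (using $X^{(k+1)}(0) = $ the $k$-th formal derivative of $\bff$ evaluated at the lower-order data) that the remaining components are polynomial/rational functions that do not introduce any algebraic constraint among the free coordinates — equivalently, the Jacobian of this map has full rank at a generic point. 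On the complement of the zero locus of $Q$ and of the finitely many denominators appearing in these iterated derivatives, such points lie in $\Omega$, so $P$ must vanish there.

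The main obstacle is precisely this density/Jacobian argument: one has to verify carefully that prescribing the jet of $u$ at $0$, together with $\bfmu$ and $\bfxs$, really does let the full jet $(\bfx, \bfx', \bfx'', \ldots)$ of the state at $0$ range over a Zariski-dense set, i.e. that the differential equation $\bfx' = \bff(\bfx,\bfmu,u)$ imposes no hidden polynomial relation among the finitely many jet coordinates that appear in $P$. The clean way to see this is that $X^{(k)}(0)$ equals $x_k := $ the image of $x$ under $k$ applications of the derivation in $\R$, evaluated after substituting, and $x_k$ modulo the lower-order jet is just $x^{(k)}$ appearing with coefficient a power of $Q$ (from clearing denominators in $\bff$), which is nonzero on $\Omega$; so the evaluation map is, up to an invertible triangular change of coordinates, the identity on the relevant jet space. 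Everything else is a routine genericity/measure-zero bookkeeping once that is in place.
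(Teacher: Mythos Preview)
You have misread the ring in the statement. In $\CC[\bfmu,\bfx]\{u\}$ the symbols $x_1,\ldots,x_n$ are ordinary (algebraic) indeterminates, and only $u$ is differential; so an element $P$ of this ring involves $\bfmu$, the undifferentiated $x_j$'s, and $u,u',\ldots,u^{(h)}$ --- no $x_j^{(k)}$ with $k\geqslant 1$. With this correction the paper's argument is a one--liner: if $P\in\J$, then for every $((\hat\bfmu,\hat\bfxs),\hat u)\in\Omega$ the analytic function $P(\hat\bfmu,X(\hat\bftheta,\hat u),\hat u,\ldots,\hat u^{(h)})$ vanishes identically, and in particular at $t=0$, giving $P(\hat\bfmu,\hat\bfxs,\hat u(0),\ldots,\hat u^{(h)}(0))=0$. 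Since $(\hat\bfmu,\hat\bfxs,\hat u(0),\ldots,\hat u^{(h)}(0))$ can be chosen to be any point of $\CC^{s+h+1}$ with $Q(\hat\bfmu,\hat\bfxs,\hat u(0))\neq 0$ (take $\hat u$ a polynomial in $t$ with prescribed Taylor coefficients), the nonzero polynomial $P$ must miss zero somewhere, a contradiction. Your ``density'' machinery is unnecessary.

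More importantly, the density claim you set up for the $\bfx$--jet is \emph{false}, and so is the statement you are effectively trying to prove. The map
\[
(\hat\bfmu,\hat\bfxs,c_0,\ldots,c_h)\ \longmapsto\ \big(\hat\bfmu,\hat\bfxs,X'(0),\ldots,X^{(N)}(0),\hat u(0),\ldots,\hat u^{(h)}(0)\big)
\]
has source of dimension $\lambda+n+h+1$ and target of dimension $\lambda+n(N{+}1)+h+1$; for $N\geqslant 1$ it cannot be dominant, and it certainly cannot be ``an invertible triangular change of coordinates''. Once $\hat\bfmu,\hat\bfxs$ and the $u$--jet are fixed, the entire $X$--jet is determined by the ODE, so the image is a graph, not a dense set. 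Concretely, $\J$ contains the nonzero element $Qx_1'-F_1\in\CC[\bfmu]\{\bfx,u\}$, so $\J\cap\CC[\bfmu]\{\bfx,u\}\neq\{0\}$: the version of the lemma you were proving is simply not true. The whole point of the actual lemma is that nothing survives once you drop all positive--order $x$--derivatives.
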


\begin{proof}
Assume that there is nonzero $P\big(\bfmu, \bs{x},u,
u^\prime,\ldots,u^{(h)}\big) \in \J \cap \CC [\bfmu, \bs{x}%
]\{ u\}$.  
Since $P$ and $Q$ are nonzero polynomials, there exist $\hat{\bftheta} := (\hat{\bs{%
\mu}}, \hat{\bs{x}}^\ast) \in \CC^s$ and $\hat{u} \in
        \CC[t] \subset \CC^{\infty}(0)$ such that  
\begin{equation*}
Q\big(\hat{\bfmu}, \hat{\bs{x}}^\ast, \hat{u}(0) \big) %
\neq 0\ \text{ and }\ P\big( \hat{\bfmu}, \hat{\bs{x}}^\ast,%
\hat{u}(0), \hat{u}^{\prime }(0), \ldots, \hat{u}^{(h)}(0) \big) \neq
0,  
\end{equation*}
which contradicts~\eqref{eq:defJ}.  
\end{proof}

\begin{lemma}
\label{lem:gens_and_prime}  
We have
\begin{equation}  \label{eq:J}
\glssymbolY{J}{\J}= \big( \left(\glssymbolY{Q}{\ensuremath{Q}} x_i^{\prime }- F_i \right)^{(j)}, \left( \glssymbolY{Q}{\ensuremath{Q}} y_k - G_k
\right)^{(j)} \:\big|\: 1\leqslant i \leqslant n, 1 \leqslant k \leqslant m,
j \geqslant0\big) \colon \glssymbolY{Q}{\ensuremath{Q}}^{\infty}
\end{equation}
and 
        \glssymbolY{J}{$\J$}
is a prime ideal.  
\end{lemma}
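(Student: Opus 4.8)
We want to show two things about $\J$: first, that it equals the ideal
\[
\J_0 := \big( (Q x_i' - F_i)^{(j)},\ (Q y_k - G_k)^{(j)} \:\big|\: 1\leqslant i\leqslant n,\ 1\leqslant k\leqslant m,\ j\geqslant 0\big)\colon Q^\infty,
\]
and second, that $\J$ is prime. The plan is to prove the inclusion $\J_0 \subseteq \J$ by a direct evaluation argument, then prove $\J \subseteq \J_0$ by passing to a convenient quotient ring in which the equations $Qx_i' = F_i$, $Qy_k = G_k$ can be solved for the highest derivatives, and finally deduce primality from the structure of that quotient.

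**The easy inclusion and the structure of the quotient.**
For $\J_0 \subseteq \J$: each generator $(Qx_i' - F_i)^{(j)}$ and $(Qy_k - G_k)^{(j)}$ vanishes after substituting $\bfmu = \hat{\bfmu}$, $\bfx = X(\hat{\bftheta},\hat u)$, $\bfy = Y(\hat{\bftheta},\hat u)$, $u = \hat u$ for any $(\hat{\bftheta},\hat u)\in\Omega$, simply because $X, Y$ solve $\Sigma(\hat{\bftheta},\hat u)$ and differentiation commutes with this substitution of analytic functions; since $\J$ is closed under the colon operation by~\eqref{eq:dividebyQ}, we get $\J_0\subseteq\J$. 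For the reverse inclusion, I would localize at $Q$ and work in $\R_Q := \R[Q^{-1}]$. Modulo the generators $Qx_i' - F_i$ and their derivatives, every derivative $x_i^{(j)}$ with $j\geqslant 1$ can be rewritten (inside $\R_Q$) as a polynomial in $\bfmu$, $\bfx$, $u$ and derivatives of $u$; likewise every $y_k^{(j)}$ with $j\geqslant 0$ is expressed through $\bfmu,\bfx,u,u',\ldots$. Hence the quotient $\R_Q / \J_0\R_Q$ is isomorphic, as a ring, to a localization of $\CC[\bfmu,\bfx]\{u\}$ — a polynomial ring in the $u$-tower over the domain $\CC[\bfmu,\bfx]$, localized at (the image of) $Q$. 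This ring is a domain, so $\J_0\R_Q$ is prime, hence $\J_0\R_Q\cap\R = \J_0$ (the colon by $Q^\infty$ already saturates) is prime.

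**Closing the gap and getting primality.**
It remains to show $\J\subseteq\J_0$. Take $P\in\J$. Reduce $P$ modulo the generators of $\J_0$ inside $\R_Q$ to obtain $\bar P\in\CC[\bfmu,\bfx]\{u\}$ (after clearing a power of $Q$, so $Q^N P - \bar P \in \J_0$ for some $N$, with $\bar P$ free of $\bfx$-derivatives and of all $\bfy$-variables). Since $\J_0\subseteq\J$, we get $\bar P\in\J$, so $\bar P\in\J\cap\CC[\bfmu,\bfx]\{u\}$, which is $\{0\}$ by Lemma~\ref{lem:empty_intersection}. Therefore $Q^N P\in\J_0$, and applying the colon-by-$Q^\infty$ in the definition of $\J_0$ gives $P\in\J_0$. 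Combined with $\J_0\subseteq\J$, this yields $\J = \J_0$; and since we already identified $\J_0$ (equivalently $\J$) as the contraction of the prime ideal $\J_0\R_Q$, $\J$ is prime.

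**Main obstacle.**
The routine-looking but genuinely delicate point is the rewriting step: showing precisely that in $\R_Q$ every $x_i^{(j)}$ ($j\geqslant1$) and every $y_k^{(j)}$ ($j\geqslant 0$) is congruent modulo the listed generators to an element of $\CC[\bfmu,\bfx]\{u\}$, and that this gives a ring isomorphism $\R_Q/\J_0\R_Q \cong \big(\CC[\bfmu,\bfx]\{u\}\big)_Q$ rather than merely a surjection. One must check that no new relations among $\bfmu,\bfx,u,u',\ldots$ are introduced — i.e. that the natural map $\CC[\bfmu,\bfx]\{u\} \to \R_Q/\J_0\R_Q$ is injective — which is where the saturation by $Q^\infty$ and the fact that $Q\in\CC[\bfmu,\bfx,u]$ is needed. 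Everything else (the evaluation argument for $\J_0\subseteq\J$, the invocation of Lemma~\ref{lem:empty_intersection}) is bookkeeping.
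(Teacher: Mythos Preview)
Your proof is correct and follows the same overall strategy as the paper: both establish $\J_0 \subseteq \J$ by evaluating the generators at solutions and invoking~\eqref{eq:dividebyQ}, and both establish $\J \subseteq \J_0$ by reducing an arbitrary $P\in\J$ modulo the generators to an element of $\CC[\bfmu,\bfx]\{u\}$ and then applying Lemma~\ref{lem:empty_intersection}. The packaging differs in two respects. First, the paper phrases the reduction step via the triangular-set framework (citing~\cite[Section~4.2]{Hubert1} for the pseudo-division that produces your $\bar P$), whereas you localize at $Q$ and argue directly; these are equivalent, and what you flag as the ``main obstacle'' (injectivity of $\CC[\bfmu,\bfx]\{u\}_Q \to \R_Q/\J_0\R_Q$) is precisely what the triangular-set formalism handles automatically. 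A clean way to dispatch it in your language is to build a retraction: make $\CC[\bfmu,\bfx]\{u\}_Q$ a differential ring via $x_i' := F_i/Q$, and send $y_k \mapsto G_k/Q$. Second, for primality the paper argues after establishing $\J=\J_0$, by reducing $Q^N P_1$ and $Q^N P_2$ to elements of the domain $\CC[\bfmu,\bfx]\{u\}$ and again using Lemma~\ref{lem:empty_intersection}; your route---identifying $\R_Q/\J_0\R_Q$ with $(\CC[\bfmu,\bfx]\{u\})_Q$ and reading primality off from the quotient being a domain---is slightly more structural and yields primality of $\J_0$ before the equality $\J=\J_0$ rather than after.
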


\begin{proof}
Consider any ordering $>$ of the variables in
            $\R$
such that 
    \begin{enumerate}[(a)]

\item $y_{i}^{(k + 1)} > y_j^{(k)}$ for every $1 \leqslant i, j \leqslant m$
and $k \geqslant 0$; 

\item $y_{i}^{(k)}$ is larger that any variable in $\CC[\bs{%
\mu}]\{\bs{x}, u\}$  for every $1 \leqslant i \leqslant m$ and $k
\geqslant 0$; 

\item $x_{i}^{(k + 1)} > x_j^{(k)}$ for every $1 \leqslant i, j \leqslant n$
and $k \geqslant 0$; 

\item $x_{i}^{(k)}$ is larger that any variable in $\CC[\bs{%
\mu}]\{u\}$  for every $1 \leqslant i \leqslant n$ and $k \geqslant 0$; 
\end{enumerate}

Then the set of polynomials  
\begin{equation}  \label{eq:triangular}
\cS := \big\{\big(Q x_i^{\prime }- F_i \big)^{(j)},
\left( Q y_k - G_k \right)^{(j)} \mid 1\leqslant i \leqslant n, 1 \leqslant
k \leqslant m, j\geqslant0 \big\}
\end{equation}
is a triangular set (see~\cite[Definition~4.1 and page~10]{Hubert1}).

In order to prove~\eqref{eq:J}, we consider $\widetilde{\J} := (\cS) \colon
Q^{\infty}$.  For all $((\hat{\bfmu},\hat{\bs{x}}^\ast),%
\hat{u})\in\Omega$, it follows from the definition of $X((\hat{\bs{%
\mu}},\hat{\bs{x}}^\ast), \hat u),Y((\hat{\bfmu},\hat{%
\bs{x}}^\ast),\hat u)$ that  
\begin{align*}
Q(\hat{\bfmu},\hat{\bs{x}}^\ast, \hat u)\cdot X((\hat{%
    \bfmu},\hat{\bs{x}}^\ast), \hat u)^{\prime }-\bs{F}
\big(\hat{\bfmu},X((\hat{\bfmu},\hat{\bs{x}}%
        ^\ast), \hat u),\hat u\big)&=0, \\
Q(\hat{\bfmu},\hat{\bs{x}}^\ast, \hat u)\cdot Y((\hat{%
        \bfmu},\hat{\bs{x}}^\ast), \hat u)-
    \bs{G}
    \big(%
\hat{\bfmu},X((\hat{\bfmu},\hat{\bs{x}}^\ast),
\hat u),\hat u\big)&=0.
\end{align*}
Since
        $\J$                 
is a differential ideal, we therefore have $\cS \subset \J$.  By~%
\eqref{eq:dividebyQ},  we moreover obtain $\widetilde{\J} \subset \J$. 
For the reverse containment, consider $P\in \J$. 
Since $\mathcal{S}$ is a triangular set, let $N$ be a positive integer and  $P_0 \in \CC[%
\bfmu, \bs{x}]\{ u\}$ (see~\cite[Section~4.2]{Hubert1})
such that  
    \[
            Q^N P - P_0 \in (\mathcal{S}) \subset \mathcal{J}.
\]
Hence, $P_0 \in \J$, so $P_0 = 0$ by Lemma~\ref{lem:empty_intersection}. 
Then $Q^NP \in \widetilde{\J}$, so $P \in \widetilde{\J}$.

For the primality of
        $\J$,                 
consider $P_1$ and $P_2$ such that $P_1 \cdot P_2
\in \J$.  Let $N$ be such that $Q^N P_1$ and $Q^N P_2$ are equivalent to
elements $\widetilde{P}_1$ and $\widetilde{P}_2$ of  $\CC[\bs{%
\mu}, \bs{x}] \{ u \}$ modulo $\J$.  If $\widetilde{P}_1\widetilde{P}%
_2 = 0$, then $P_1\in \J$ or $P_2 \in \J$.  If $\widetilde{P}_1\widetilde{P}%
_2 \ne 0$, then, by Lemma~\ref{lem:empty_intersection}  and a straightforward argument, 
    \[
        Q^N P_1 \cdot Q^N P_2\notin \J,
\]
        so $P_1P_2\notin \J$, which is a contradiction.
\end{proof}


\subsection{Algebraic Criterion: Non-constructive Version}

\label{sec:equiv} 

\begin{notation}
    \label{not:SFE}  We will use the following notation.
    \begin{enumerate}[(i)]
\item    Let
    \glssymbolX{cT}{$\cT:=$} 
            $\glssymbolY{R}{\R}/\glssymbolY{J}{\J}$            
    and
            \glssymbolX{cF}{$\cF$} $:=\glssymbolY{quot}{\Quot}(\glssymbolY{cT}{\cT})$ (Recall~\eqref{eq:defR} and~\eqref{eq:defJ}).
 The latter is well-defined because $\J$ is prime, so $\cT$ is a domain.
    
\item Note that 
    \glssymbolY{cF}{$\cF$}
    is generated by the images of 
$\glssymbolY{systemparams}{\bfmu},
\glssymbolY{states}{\bfx},
\glssymbolY{input}{u}, \glssymbolY{input}{u}^\prime,\ldots$.
            
    \item
        Let   \glssymbolX{cE}{\ensuremath{\cE}}
         denote the subfield of \glssymbolY{cF}{$\cF$} generated by
        the image of $\CC\{\glssymbolY{outputs}{\ensuremath{\bfy}},\glssymbolY{input}{\ensuremath{u}}\}$.     
\item
We will denote elements of
            \glssymbolY{R}{$\R$}
and their images in
    \glssymbolY{cT}{$\cT$}
            by the same symbols.
\item  
\glssymbolY{parameters}{\ensuremath{\bftheta}}
will be understood as a tuple $(
\glssymbolY{systemparams}{\ensuremath{\bfmu}},
\glssymbolY{initialstates}{\ensuremath{\bfxs}}
)$ if it is considered as the tuple of parameters of
\glssymbolY{model}{$\Sigma$}
and  as a tuple of variables $(
\glssymbolY{systemparams}{\ensuremath{\bfmu}},
x_1, \ldots, x_n)$
if it is considered as a tuple of elements of \glssymbolY{R}{$\R$} or its subalgebras.

\item
For every $\hat{\bftheta} \in \CC^s$, let
\glssymbolX{omegahatbftheta}{
                $\Omega_{\hat{\bftheta}}$}
            $:= \{ \hat{u} \in \glssymbolY{analytic}{\ensuremath{\cinfty}} \:|\: \glssymbolY{Q}{\ensuremath{Q}}(\hat{\bftheta}, \hat{u})
            \neq 0\} \subset \glssymbolY{analytic}{\ensuremath{\cinfty}}$.
\end{enumerate}
\end{notation}

\begin{proposition}
\label{prop:equiv_fields}  For every parameter $\theta$ of system $\Sigma$, the
following statements are equivalent 
\begin{enumerate}[(a)]
\item \label{item:new_def} $\theta$ is globally (resp., locally)
identifiable according to  Definition~\ref{def:identifiability}, that is,
\begin{gather*}
\exists\,\Theta\in \glssymbolY{zariskiOpenAlgebraic}{\tau(\CC^{s})} \ \ \exists\, U\in \glssymbolY{zariskiOpenAlgebraic}{\tau(\CC^{\infty}(0))}\ \ 
\forall\,(\hat{\bftheta},\hat{u})\in  \glssymbolY{regular}{\ensuremath{\Omega}} \cap \left(  \Theta\times U\right) \\ \text{the size
of\ } S_{\theta}\big(  \hat{\bftheta},\hat{u}\big)  \ \text{is one
(resp., finite), }%
\end{gather*}
where
\[
S_\theta(\hat{\bftheta},\hat{u}):=\operatorname{proj}_{\theta} \big\{
\tilde{\bftheta}\:\big|\:({\tilde{\bftheta}},\hat{u})\in \glssymbolY{regular}{\ensuremath{\Omega}}\ \text{
and }\ Y\big(  \hat{\bftheta},\hat{u}\big)  =Y\big(
\tilde{\bftheta},\hat{u}\big)\big\}.
\]
\item \label{item:old_def} $\theta$ is globally (resp., locally)
identifiable according to much of the previous literature 
 \cite{CD1980,WL1982,V1983,V1984,MRCW2001,DAISY,WP1997}, that is,  
\[\exists\,\Theta\in\glssymbolY{zariskiOpenAlgebraic}{\tau(\CC^{s})} \ \
\forall\,\hat{\bftheta}\in   \Theta  \ \ \text{the size
of\ } S_{\theta}'\big(  \hat{\bftheta}\big)  \ \text{is one
(resp., finite),}\]
where
\[
  S'_{\theta}(\hat{\bftheta}):= 
  \operatorname{proj}_{\theta}
  \big\{ \tilde{\bftheta}\: \big|\:
\glssymbolY{omegahatbftheta}{\ensuremath{\Omega_{\tilde{\bftheta}}}}
        \neq \varnothing \ \ \text{and}\ \ \forall\, \hat{u} \in \glssymbolY{omegahatbftheta}{\ensuremath{\Omega_{\tilde{\bftheta}}}} \cap \glssymbolY{omegahatbftheta}{\ensuremath{\Omega_{\hat{\bftheta}}}} \;\; Y(\tilde{\bftheta}, \hat{u}) = Y(\hat{\bftheta}, \hat{u}) \big\}.
\]
Note that the main difference with  Definition~\ref{def:identifiability}
is that ``$\forall\, \hat{u}$'' has been put inside of $S$. 

\item \label{item:fields} the fields
    \glssymbolY{cE}{\ensuremath{\cE}}
and $\glssymbolY{cE}{\ensuremath{\cE}}(\theta)$ coincide (resp.,
the extension $\glssymbolY{cE}{\ensuremath{\cE}} \subset \glssymbolY{cE}{\ensuremath{\cE}}(\theta)$ is algebraic). 
\end{enumerate}
\end{proposition}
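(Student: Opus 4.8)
The plan is to prove the cycle of implications $\ref{item:new_def} \Rightarrow \ref{item:old_def} \Rightarrow \ref{item:fields} \Rightarrow \ref{item:new_def}$, with the passage through the field-theoretic condition \ref{item:fields} serving as the conceptual bridge. Throughout, I will work over the prime differential ideal $\J$ and the domain $\cT = \R/\J$ supplied by Lemma~\ref{lem:gens_and_prime}, so that $\cF = \Quot(\cT)$ is well-defined and the ``generic'' behavior of the output map $Y$ is faithfully recorded by the inclusion of subfields $\cE \subset \cF$. The key translation principle, used in both directions, is that a polynomial relation holds identically along the solution curves $(X(\hat{\bftheta},\hat u), Y(\hat{\bftheta},\hat u))$ for all $(\hat{\bftheta},\hat u)\in\Omega$ if and only if it lies in $\J$; and by Lemma~\ref{lem:empty_intersection} together with the primality argument, a parameter $\theta$ (viewed in $\cT$ via Notation~\ref{not:SFE}) is algebraic over the subfield $\cE$ generated by the image of $\CC\{\bfy, u\}$ precisely when there is a nonzero polynomial in $\theta$ with coefficients that are differential-polynomial expressions in $\bfy, u$ that vanishes on all solution curves.

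For $\ref{item:fields}\Rightarrow\ref{item:old_def}$: assume $\theta$ is algebraic over $\cE$ (the global case is $\theta\in\cE$). Clearing denominators, there is a nonzero $A(\theta) = \sum_{i=0}^d c_i\,\theta^i \in \J$ with $c_i \in \CC\{\bfy,u\}$, $c_d\neq 0$ in $\cT$. The nonvanishing of $c_d$ on solution curves means the polynomial $c_d$ evaluated at $(Y(\hat{\bftheta},\hat u),\hat u)$ is not the zero analytic function; a Zariski-open choice of $\hat{\bftheta}$ (pulling back the locus where this holds) and a Zariski-open choice of $\hat u$ make it nonzero at $t=0$. On that open set, any $\tilde{\bftheta}$ producing the same output $Y(\tilde{\bftheta},\hat u) = Y(\hat{\bftheta},\hat u)$ satisfies $\sum_i c_i(Y(\hat{\bftheta},\hat u),\hat u)\,\tilde\theta_1^{\,i} = 0$ as an identity in $t$, hence $\tilde\theta_1$ lies among the $\leqslant d$ roots of a fixed nonzero polynomial, so $S_\theta'(\hat{\bftheta})$ is finite; and if $\theta\in\cE$ then $d=1$ and it is a singleton. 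The reverse direction $\ref{item:old_def}\Rightarrow\ref{item:fields}$ is the heart of the matter: from finiteness of $S_\theta'$ on a Zariski-open set I must \emph{produce} an algebraic relation. The natural route is a dimension/transcendence-degree argument: $\operatorname{tr.deg}_{\cE}\cF$ measures the dimension of the generic fiber of the output map (this is where genericity in Definition~\ref{def:identifiability} becomes essential), and if $\theta$ were transcendental over $\cE$ then the fiber over a generic output would be positive-dimensional in the $\theta$-direction, contradicting finiteness of $S_\theta'$. Making this rigorous requires relating the algebraic fiber over the generic point of $\cE$ to the analytic fibers $S_\theta'(\hat{\bftheta})$ — a specialization argument invoking that a nonempty Zariski-open set has full measure (cited in Notation~\ref{not:1}) and that analytic solutions specialize compatibly.

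The equivalence $\ref{item:new_def}\Leftrightarrow\ref{item:old_def}$ is then a matter of commuting the quantifier ``$\forall\,\hat u$'' past the projection defining $S$: one shows that for $(\hat{\bftheta},\hat u)$ in a suitable Zariski-open set, $S_\theta(\hat{\bftheta},\hat u)$ and $S_\theta'(\hat{\bftheta})$ have the same cardinality behavior. The containment $S_\theta'(\hat{\bftheta}) \subset S_\theta(\hat{\bftheta},\hat u)$ is essentially immediate once one checks that $\tilde{\bftheta}\in\Omega_{\tilde{\bftheta}}$-type conditions are compatible with $\hat u\in\Omega_{\hat{\bftheta}}$. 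For the other direction, a single generic $\hat u$ already detects any algebraic dependence among finitely many differential-polynomial coefficients (by the definition of Zariski-open subsets of $\cinfty$), so finiteness of $S_\theta(\hat{\bftheta},\hat u)$ for one generic $\hat u$ forces the algebraic relation in $\J$, which by the $\ref{item:fields}\Rightarrow\ref{item:old_def}$ argument gives finiteness of $S_\theta'$. I expect the main obstacle to be the implication $\ref{item:old_def}\Rightarrow\ref{item:fields}$, specifically the transfer between the algebraic generic fiber (over the function field $\cE$) and the analytic/numeric fibers $S_\theta'(\hat{\bftheta})$ for Zariski-generic complex $\hat{\bftheta}$: one needs that generic analytic solutions realize the generic point of $\cT$, so that a generic specialization neither creates spurious solutions nor destroys the fiber dimension. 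This is where the construction of $\J$ as the ideal of \emph{all} solution-curve relations, and its primality, does the essential work.
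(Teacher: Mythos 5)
Your outline agrees with the paper's strategy for the easier implications: \ref{item:fields}$\Rightarrow$\ref{item:new_def} via a polynomial relation over $\cE$ whose leading coefficient is made nonvanishing on a Zariski-open set (the paper does this with Lemma~\ref{lem:nonzero_poly}, after first reducing the leading coefficient modulo $\J$ to an element of $\CC[\bftheta]\{u\}$ --- a reduction step you gloss over but which is needed, since a coefficient in $\CC\{\bfy,u\}$ does not by itself cut out a Zariski-open set of parameter values), and \ref{item:new_def}$\Leftrightarrow$\ref{item:old_def} by commuting the quantifier on $\hat u$. Even there, one technical point you skip is that passing from finiteness of $S_\theta(\hat{\bftheta},\hat u)$ to statements about $S'_\theta(\hat{\bftheta})$ with possibly infinitely many witnesses $\tilde{\bftheta}_1,\tilde{\bftheta}_2,\ldots$ requires a single $\hat u$ lying in the \emph{countable} intersection $U\cap\Omega_{\tilde{\bftheta}_1}\cap\Omega_{\tilde{\bftheta}_2}\cap\cdots$; this is not automatic for Zariski-open subsets of $\cinfty$ and is the content of the paper's Lemma~\ref{lem:countable_intersection}.

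The genuine gap is in \ref{item:old_def}$\Rightarrow$\ref{item:fields}, which you correctly identify as the heart of the matter but for which your proposed route does not work as stated. The set $S'_\theta(\hat{\bftheta})$ is cut out by requiring $Y(\tilde{\bftheta},\hat u)=Y(\hat{\bftheta},\hat u)$ for \emph{all} admissible inputs $\hat u$ simultaneously; it is not the fiber of a single map between finite-dimensional varieties, so a transcendence-degree count of ``the generic fiber of the output map'' does not directly manufacture a second point of $S'_\theta(\hat{\bftheta})$ --- a point in the fiber of a truncated output map need not agree with $\hat{\bftheta}$ to all orders and for all inputs. The paper's actual mechanism is to produce, from the assumed transcendence (resp.\ properness of $\cE\subset\cE(\theta)$), a differential automorphism of an extension of $\cF$ over $\cE$ moving $\theta$ --- via \cite[Theorem~2.6]{Kaplansky} in the global case, and via differential closures and model theory (Lemma~\ref{lem:models}, relying on atomicity of the differential closure) to get \emph{infinitely many} conjugates in the local case --- and then to specialize: one needs a $\CC$-point $\varepsilon$ of a countably generated subalgebra avoiding the loci $Q_1=0$, $Q_2=0$, and $\alpha_i(\theta)=\alpha_j(\theta)$, and then Lemmas~\ref{lem:fields_intersection}, \ref{lem:two_algebras}, and~\ref{lem:solution} to show that $u,u',\ldots$ remain algebraically independent over the specialized parameters (so $\varepsilon$ extends to arbitrary input jets) and that the resulting algebraic points are realized by genuine analytic solutions with identical outputs. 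None of these three ingredients --- the automorphism over $\cE$, the specialization to $\CC$-points, and the compatibility of that specialization with analytic solutions for every input --- is present in your sketch, and the phrase ``analytic solutions specialize compatibly'' is precisely where the real work lives.
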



\begin{proof}
\underline{\ref{item:fields} $\implies$~\ref{item:new_def}.}
Assume~\ref{item:fields}. 
Considering the preimage of a minimal polynomial of $\theta$ over $\cT$ in $\R$, we obtain a nonzero polynomial (resp.,
nonzero polynomial linear in $\theta$) in $\CC\{ \bs{y}, u \}
[\theta] \cap \J$ such that its leading coefficient $\ell$ does not lie in
        $\J$.
From Lemma~\ref{lem:gens_and_prime} and the triangular set constructed in its proof, we obtain that there exists $M$ such that $Q^M \ell$ is equal to some $\ell_0
\in \CC[\bftheta]\{ u \}$ modulo $\J$. We apply Lemma~\ref%
{lem:nonzero_poly} (see next) to the polynomial $\ell$ and obtain
nonempty open $\Theta \subset \CC^s$ and $U \subset \CC^{\infty}(0)$.
We claim that Definition~\ref{def:identifiability} holds for this choice of
open sets. Consider any  $(\hat{\bftheta}, \hat{u}) \in (\Theta
\times U)\cap \Omega$. The choice of $\Theta$ and $U$ implies that $\ell_0$ does not vanish at $(\hat{\bftheta}, \hat{u})$. 
Since $Q$ does not vanish at $(\hat{\bftheta}, \hat{u})$, then $%
\ell$ does not vanish at this point, so there are only finitely many (resp.,
only one) possible values for every $\theta$ provided that
    $\hat{\bs{y}} := Y(\hat{\bftheta}, \hat{u})$ and $\hat{u}$ are fixed. Thus,
    we have~\ref{item:new_def}.

\underline{\ref{item:new_def} $\implies$~\ref{item:old_def}.}
Assume~\ref{item:new_def} and let $\Theta_0 \subset \CC^s$ and 
$U \subset \CC^{\infty}(0)$ be the open subsets from the definition.
We claim that (b) holds with  
        \[
        \Theta_1 := \Theta_0 \cap \big\{\hat{\bftheta} \:\big|\:
                Q(\hat{\bftheta}, u) \neq 0\big\}.
        \]
Assume the contrary. Then there exists $\hat{\bftheta} \in\Theta_1$ such that
$|S'_{\theta}(\hat{\bftheta})| > 1$ (resp., $S'_{\theta}(\hat{\bftheta})$ is infinite).
The inequation $Q(\hat{\bftheta}, u) \neq 0$ implies  
$\Omega_{\hat{\bftheta}} \neq \varnothing$.
Now consider local and global identifiability separately
\begin{enumerate}[(i)]
  \item \underline{Global identifiability.}
  Let $\tilde{\theta} \in S'_{\theta}(\hat{\bftheta})$ such that $\tilde{\theta} \neq \hat{\theta}$.
  Consider the corresponding $\tilde{\bftheta}$ from the definition of $S'_\theta$.
  Since $U, \Omega_{\hat{\bftheta}}, \Omega_{\tilde{\bftheta}}$ are nonempty Zariski open sets,
  there exists $\hat{u}$ in their intersection \ by Lemma~\ref{lem:countable_intersection}.
  Then $\hat{\theta}, \tilde{\theta} \in S_\theta(\hat{\bftheta}, \hat{u})$, but
  this contradicts~\ref{item:new_def}. 
  
  \item \underline{Local identifiability.}
  Let $\tilde{\theta}_1, \tilde{\theta}_2, \ldots \in S'_{\theta}(\hat{\bftheta})$ be distinct elements.
  Consider the corresponding $\tilde{\bftheta}_1, \tilde{\bftheta}_2, \ldots$ 
  from the definition of $S'_\theta$.
  Since $U, \Omega_{\tilde{\bftheta}_1}, \Omega_{\tilde{\bftheta}_2}, \ldots$ are nonempty 
  Zariski open sets,
 Lemma~\ref{lem:countable_intersection} implies that there exists $\hat{u}$ in their intersection.
  Then $\tilde{\theta}_1, \tilde{\theta}_2, \ldots \in S_\theta(\hat{\bftheta}, \hat{u})$, but
  this contradicts~\ref{item:new_def}.
  \end{enumerate}

\underline{\ref{item:old_def} $\implies$~\ref{item:fields}
for global identifiability.} Assume~\ref{item:old_def} but the containment 
    $\cE
        \subset \cE(\theta)$ is proper.
Let $Q_1 \in \CC[\bftheta]$ be a polynomial
defining the complement to $\Theta$ from~\ref{item:old_def} and
$Q_2 \in \CC[\bftheta]$ be any non-zero coefficient of $Q$ viewed as
    a polynomial in
$u$.
Due to~\cite[Theorem~2.6]{Kaplansky} applied with $K$ being $\cE$, $L$ being $\cF$, and
$s$ being $\theta$, there is a differential field $\widehat{\cF} \supset \cF$ and 
a differential automorphism $\alpha\colon \widehat{\cF} \to \widehat{\cF}$ over
   $\cE$
such that
$\alpha(\theta) \neq \theta$.
For a finitely generated subalgebra 
\[A := \CC[1/Q_1(\bftheta), \bftheta, \alpha(\bftheta), 1/(\theta - \alpha(\theta)), 1/Q_2(\bftheta), 1/Q_2(\alpha(\bftheta)) ]\] of $\widehat{\cF}$,
there exists a $\CC$-algebra homomorphism
  $\varepsilon \colon  A \to \CC$.
Let $\hat{\bftheta} := \varepsilon(\bftheta)$ and $\tilde{\bftheta} := \varepsilon(\alpha(\bftheta))$.
Then $\hat{\bftheta} \in \Theta$, $\hat{\theta} \neq \tilde{\theta}$, and
$\Omega_{\hat{\bftheta}}$ and $\Omega_{\tilde{\bftheta}}$ are nonempty.
Applying Lemma~\ref{lem:two_algebras} to the natural embedding $\cT \to \widehat{\cF}$
and the restriction of $\alpha$ to $\cT \to \widehat{\cF}$ as $\beta_2$ and $\beta_1$
and to the appropriate restriction of $\varepsilon$ as $\gamma$,
we show that  $\tilde{\theta} \in S'_{\theta}(\hat{\bftheta})$.
Thus, $|S'_{\theta}(\hat{\bftheta})|\geqslant 2$, and this contradicts~\ref{item:old_def}.

\underline{\ref{item:old_def} $\implies$~\ref{item:fields}
for local identifiability.} Assume~\ref{item:old_def} but $\theta$ is
transcendental over $\cE$.
Let $Q_1 \in \CC[\bftheta]$ be a polynomial
defining the complement to $\Theta$ from~\ref{item:old_def} and 
$Q_2 \in \CC[\bftheta]$ be any non-zero coefficient of $Q$ viewed as a polynomial in $u$.
Lemma~\ref{lem:models} implies that there exists a differential field $\widetilde{\cF} \supset \cF$ 
and automorphisms $\alpha_1 = \mathrm{id}, \alpha_2, \alpha_3, \ldots$ of $\widetilde{\cF}$
over its differential subfield $\cE$ such that $\alpha_1(\theta), \alpha_2(\theta), \ldots$
are all distinct.
Since the subalgebra $\mathcal{A}:= \CC[S_1,S_2]$, where
\begin{align*}
&S_1:= \left\{1/Q_1(\bftheta), \alpha_1(\bftheta), \alpha_2(\bftheta), \ldots , 1/Q_2(\alpha_1(\bftheta)), 1/Q_2(\alpha_2(\bftheta)), \ldots\right\},\\
&S_2 := \left\{1/(\alpha_i(\theta) - \alpha_j(\theta)) \:|\: 1 \leqslant i < j \right\},
\end{align*}
of $\widetilde{\cF}$
is countably generated, there exists a $\CC$-algebra homomorphism
$\varepsilon\colon \mathcal{A} \to \CC$~\cite[Theorem~10.34.11]{stacks-project}.
Let $\hat{\bftheta} := \varepsilon(\bftheta)$, then $\hat{\bftheta} \in \Theta$ and all $\varepsilon(\alpha_i(\theta))$ are distinct.
Moreover, for every $i \geqslant 0$, $\Omega_{\varepsilon(\alpha_i(\theta))} \neq \varnothing$.
Consider $i \geqslant 1$ and apply Lemma~\ref{lem:two_algebras} to the restrictions
of $\alpha_1$ and $\alpha_i$ to $\cT \to \widetilde{\cF}$ as $\beta_2$ and $\beta_1$
and to the appropriate restriction of $\varepsilon$ as $\gamma$.
We obtain that, for every $i \geqslant 1$, $\varepsilon(\alpha_i(\theta)) \in S'_\theta(\hat{\bftheta})$, which is a contradiction.
\end{proof}


\begin{lemma}
\label{lem:nonzero_poly} Let $P(\glssymbolY{parameters}{\bftheta},\glssymbolY{input}{u},\ldots,\glssymbolY{input}{u}^{(N)})\in%
\CC [\glssymbolY{parameters}{\bftheta}]
    \{\glssymbolY{input}{u} \}$
be nonzero. Then there exist nonempty
Zariski open subsets $\Theta\in\CC^{s}$ and $U\subset \glssymbolY{analytic}{\ensuremath{\cinfty}}$
such that, for every $\hat{\bftheta}\in\Theta$ and $\hat{u}\in U
$, the function $P(\hat{\bftheta}, \hat{u},\ldots,(\hat{u})^{(N)})$ 
is a nonzero element of $\glssymbolY{analytic}{\ensuremath{\cinfty}}$.
\end{lemma}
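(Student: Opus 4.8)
The plan is to separate the roles of $\bftheta$ and $u$. First I would write $P$ as a polynomial in the jet variables of $u$ with coefficients in $\CC[\bftheta]$, say
\[
P(\bftheta, u, \ldots, u^{(N)}) = \sum_{\alpha} c_\alpha(\bftheta)\, (u')^{\alpha_1}\cdots (u^{(N)})^{\alpha_N} \cdot u^{\alpha_0},
\]
and fix one multi-index $\alpha$ with $c_\alpha \not\equiv 0$. Since $c_\alpha$ is a nonzero polynomial, the set $\Theta := \{\whbftheta \in \CC^s \mid c_\alpha(\whbftheta) \neq 0\}$ is a nonempty Zariski open subset of $\CC^s$ (in the sense of Notation~\ref{not:1}(e)). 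The point of this choice is that after substituting any $\whbftheta \in \Theta$, the polynomial $P(\whbftheta, u, \ldots, u^{(N)})$ is \emph{still a nonzero polynomial} in the algebraic indeterminates $u, u', \ldots, u^{(N)}$, because the coefficient of the monomial indexed by $\alpha$ survives.

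Next I would handle the analytic side. Given that $P(\whbftheta, u, \ldots, u^{(N)})$ is a nonzero polynomial in $u, \ldots, u^{(N)}$, I want a single Zariski open $U \subset \cinfty$ (not depending on $\whbftheta$) on which $P(\whbftheta, \hat u, \ldots, \hat u^{(N)})$ is a nonzero element of $\cinfty$. The natural move is to evaluate at $t = 0$: a function in $\cinfty$ is nonzero iff it does not vanish identically near $0$, and it certainly does not vanish identically if its value at $t=0$ is nonzero. So I would like to take
\[
U := \big\{ \hat u \in \cinfty \;\big|\; P\big(\whbftheta, \hat u(0), \hat u'(0), \ldots, \hat u^{(N)}(0)\big) \neq 0 \big\},
\]
but this $U$ as written depends on $\whbftheta$, which is not allowed. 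The fix is to observe that $P$, viewed in $\CC[\bftheta][u, u', \ldots, u^{(N)}]$, has \emph{some} coefficient $d(\bftheta)$ of a monomial $u^{\beta_0}(u')^{\beta_1}\cdots$ that is not identically zero; shrink $\Theta$ to ensure $d(\whbftheta)\neq 0$ as well (take the product of $c_\alpha$ with this $d$ as the defining polynomial of $\Theta$). Then $P(\whbftheta, w_0, \ldots, w_N) \in \CC[w_0, \ldots, w_N]$ is a nonzero polynomial with a fixed nonvanishing monomial, so the \emph{non}-vanishing of $P(\whbftheta, \cdot)$ at a generic point of $\CC^{N+1}$ holds — but to get a $U$ independent of $\whbftheta$, I instead just fix one monomial-exponent $\beta = (\beta_0, \ldots, \beta_N)$ whose coefficient $d$ is not identically zero in $\bftheta$, and set
\[
U := \big\{ \hat u \in \cinfty \;\big|\; \hat u(0)^{\beta_0}\,\hat u'(0)^{\beta_1}\cdots \hat u^{(N)}(0)^{\beta_N} \neq 0\big\},
\]
which is nonempty Zariski open in $\cinfty$ (e.g.\ $\hat u = e^t$ lies in it). This $U$ does not force $P$ itself to be nonzero, though — a single surviving monomial can be cancelled by others.

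So the clean approach is: since $P(\whbftheta, w_0, \ldots, w_N)$ is a nonzero polynomial for each $\whbftheta \in \Theta$, its set of zeros in $\CC^{N+1}$ is a proper Zariski closed set; but because the \emph{support} of this polynomial can vary with $\whbftheta$, I would instead pick \emph{one} point $(a_0, \ldots, a_N) \in \CC^{N+1}$ — namely a point where $P(\whbftheta_0, \cdot)$ does not vanish for a single chosen $\whbftheta_0 \in \Theta$ — and then set $\tilde P(\bftheta) := P(\bftheta, a_0, \ldots, a_N) \in \CC[\bftheta]$, which is nonzero since $\tilde P(\whbftheta_0)\neq 0$. Redefine $\Theta := \{\whbftheta \mid \tilde P(\whbftheta) \neq 0\}$, which is again nonempty Zariski open. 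Finally, pick $\hat u_0 \in \cinfty$ with $(\hat u_0(0), \ldots, \hat u_0^{(N)}(0)) = (a_0, \ldots, a_N)$ (a polynomial $\hat u_0 \in \CC[t]$ of degree $\le N$ does this), and take $U$ to be any nonempty Zariski open subset of $\cinfty$ — in fact the argument shows we may take $U = \cinfty$ if we are willing to guarantee nonvanishing only at $t=0$ for the specific $\hat u_0$; but to produce a genuine open $U$ as claimed, set
\[
U := \big\{ \hat u \in \cinfty \;\big|\; P\big(\bftheta, \hat u(0), \ldots, \hat u^{(N)}(0)\big)\big|_{\bftheta = \whbftheta_0} \neq 0\big\},
\]
recognizing $P(\whbftheta_0, u, \ldots, u^{(N)})|_{t=0}$ as a nonzero polynomial in $u(0), \ldots, u^{(N)}$ and hence $U$ nonempty Zariski open by Notation~\ref{not:1}(e). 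For $\whbftheta \in \Theta$ and $\hat u \in U$, the value $P(\whbftheta, \hat u, \ldots, \hat u^{(N)})(0)$ need not be nonzero for \emph{that} pair, so the honest final step is: take $\Theta$ and $U$ from the two constructions above but jointly, i.e.\ regard $H(\bftheta, w_0, \ldots, w_N) := P$ as one nonzero polynomial in $s + N + 1$ variables, and apply Notation~\ref{not:1}(e)-type reasoning to get nonempty Zariski open $\Theta \times U$-type sets on which $H \neq 0$ at $t = 0$; the main (and essentially only) obstacle is the bookkeeping of making $U$ independent of the chosen $\whbftheta$, which is resolved precisely because $H$ is a \emph{single} polynomial, so its nonvanishing locus is a single nonempty Zariski open subset of $\CC^{s} \times \CC^{N+1}$ containing a product $\Theta \times (\text{evaluation-open set in } \cinfty)$. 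Once a point of this product is found, nonvanishing at $t = 0$ implies nonvanishing as an element of $\cinfty$, which is the conclusion.
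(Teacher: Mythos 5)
Your proposal has a genuine gap at its final step. You reduce everything to the claim that the nonvanishing locus of the single polynomial $H(\bftheta, w_0,\ldots,w_N) := P$ in $\CC^{s}\times\CC^{N+1}$ contains a product $\Theta\times V$ of nonempty Zariski open sets. This is false in general: for $H = \theta_1 - w_0$, any two nonempty Zariski open subsets $\Theta, V\subset\CC$ are cofinite and hence intersect, so $\Theta\times V$ always meets the diagonal $\{\theta_1 = w_0\}$. The same example shows that the stronger statement you are implicitly aiming for --- that $P(\whbftheta,\hat u,\ldots,\hat u^{(N)})$ can be made nonzero \emph{at $t=0$} uniformly over a product $\Theta\times U$ --- is simply not achievable: for $P=\theta_1-u$ one cannot arrange $\hat u(0)\neq\hat\theta_1$ for all $\hat\theta_1\in\Theta$ and all $\hat u\in U$, since the set of values $\hat u(0)$ realized on a nonempty Zariski open $U\subset\cinfty$ is cofinite in $\CC$ (or all of $\CC$) and must meet $\Theta$. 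The lemma only asserts that the resulting \emph{function of $t$} is not identically zero near $0$; evaluating at $t=0$ discards exactly the information needed to prove that weaker conclusion.

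The paper's proof avoids this by a different decomposition: write $P=\sum_i c_i(\bftheta)\,m_i(u)$ with $m_1,\ldots,m_\ell$ distinct differential monomials in $u$ and $c_i\in\CC[\bftheta]$, take $\Theta=\{c_1\neq 0\}$, and take $U$ to be the set of $\hat u$ for which the Wronskian determinant of $m_1(\hat u),\ldots,m_\ell(\hat u)$ does not vanish at $t=0$ (a nonempty Zariski open condition on the jet of $\hat u$ at $0$, witnessed by a suitable polynomial $\hat u\in\CC[t]$). Nonvanishing of the Wronskian forces the functions $m_1(\hat u),\ldots,m_\ell(\hat u)$ to be linearly independent over $\CC$, so the combination $\sum_i c_i(\whbftheta)\,m_i(\hat u)$, which is nontrivial because $c_1(\whbftheta)\neq 0$, cannot be the zero function. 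In the example $P=\theta_1-u$ this is exactly the condition $\hat u'(0)\neq 0$, which makes $\hat\theta_1-\hat u(t)$ nonconstant and hence a nonzero function even though it may vanish at $t=0$. This Wronskian step is the idea missing from your argument.
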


\begin{proof}[Proof (incorrect, see the erratum at the end of the paper)]
We write $zP(\bftheta, u, \ldots, u^{(N)}) = \sum\limits_{i =
1}^\ell c_i(\bftheta) m_i(u)$, where $m_1, \ldots, m_\ell$ are
distinct monomials from $\CC\{ u \}$ and $c_1(\bftheta),
\ldots, c_\ell(\bftheta) \in \CC[\bftheta]$.
Let $W(u) \in \CC\{ u \}$ be the determinant of the Wronskian matrix
of $m_1, \ldots, m_{\ell}$. We set 
\begin{equation*}
\Theta := \left\{ \bftheta\in \CC^s \:|\: c_1(\bs{%
\theta}) \neq 0 \right\} \ \ \text{and}\ \ U = \left\{ u\in \CC^\infty(0)
\:|\: W(u)|_{t = 0} \neq 0 \right\}. 
\end{equation*}
Since $c_1(\bftheta)$ is a nonzero polynomial, $%
\Theta\ne\varnothing$. The differential polynomial $W$ can be considered as
an algebraic polynomial 
$W\big(u, u^{\prime}, \ldots, u^{(M)}\big) \in \CC\big[u,u^{\prime}, \ldots, u^{(M)}\big]$ 
for some $M$. Let $(a_0, \ldots, a_M) \in \CC^{M
+ 1}$ be such that $W(a_0, \ldots, a_M) \neq 0$. We set $\hat{u}(t) :=
\sum\limits_{i = 0}^M a_i \frac{t^i}{i!}$. A direct computation shows that 
\begin{equation*}
W\big(\hat{u}, \ldots, \hat{u}^{(M)}\big)|_{t = 0} = W(a_0, \ldots, a_M)
\neq 0. 
\end{equation*}
Thus, $\hat{u} \in U$, so $U\ne \varnothing$. Let $\hat{\bftheta}
\in \Theta$ and $\hat{u} \in U$. If the function $P\big(\hat{\bftheta}, \hat{u}, \ldots, \hat{u}^{(N)} \big)$ is zero, it provides a
nontrivial (because of $c_1(\hat{\bftheta}) \neq 0$) linear
dependence of $m_1(\hat{u}), \ldots, m_\ell(\hat{u})$. Since $W(\hat{u}) \neq 0$%
, such a dependence does not exist due to \cite[Proposition 2.8]{Magid}.
\end{proof}


\begin{lemma}
\label{lem:solution} Let $\varphi\colon
            \glssymbolY{R}{\R}
    \rightarrow\CC$
 be a $\CC$-algebra homomorphism such that $\varphi(\glssymbolY{Q}{\ensuremath{Q}}) \neq 0$ and
 $\glssymbolY{J}{\J}\subset \func{Ker}\varphi$.
 We define power series 
  \begin{equation}  \label{eq:uphik}
    u_{\varphi}(t)=\sum\limits_{i=0}^{\infty}\frac{\varphi\big(\glssymbolY{input}{\ensuremath{u}}^{(i)}\big)}
      {i!} t^{i},\quad\bfy_{\varphi}=\sum\limits_{i=0}^{\infty}\frac{\varphi%
    \big(\glssymbolY{outputs}{\ensuremath{\bfy}}^{(i)}\big)}{i!}t^{i}.
  \end{equation}
  If $u_\varphi$ converges in some neighborhood of $0$, 
  then $\bfy_\varphi$ defines a function in $\glssymbolY{analytic}{\ensuremath{\cinfty}}$ and 
  \begin{equation*}
    \bfy_{\varphi}=Y\big(  \varphi(\glssymbolY{parameters}{\bftheta}),u_{\varphi}\big) . 
  \end{equation*}
\end{lemma}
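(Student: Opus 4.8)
The plan is to verify directly that the two power series $u_\varphi$ and $\bfy_\varphi$, together with the power series $\bfx_\varphi$ built from $\varphi$ in the analogous way, satisfy the defining equations of the instance $\Sigma(\varphi(\bftheta), u_\varphi)$, and then invoke the uniqueness of analytic solutions (\cite[Theorem~2.2.2]{Hille}, already cited in Notation~\ref{not:1}(d)) to conclude that $\bfy_\varphi$ must coincide with $Y(\varphi(\bftheta), u_\varphi)$. The first step is to introduce $\bfx_\varphi(t) := \sum_{i=0}^\infty \frac{\varphi(\bfx^{(i)})}{i!} t^i$ and observe that, since all the $\varphi(u^{(i)})$ are the Taylor coefficients (up to factorials) of the \emph{convergent} series $u_\varphi$, and since $\bfx_\varphi, \bfy_\varphi$ are obtained from the ODE system recursively, one first needs to argue that $\bfx_\varphi$ and $\bfy_\varphi$ also converge near $0$. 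This should follow from the convergence of $u_\varphi$ by the standard majorant/Cauchy estimates for analytic ODEs: the derivatives $\varphi(x_i^{(q)})$ are, modulo $\J$, polynomial expressions in $\bfmu$, the lower-order $x_j^{(k)}$'s, $u^{(k)}$'s, and $1/Q$, exactly matching the recursion that produces the Taylor coefficients of the genuine analytic solution; so the formal series $\bfx_\varphi$ is forced to have the same growth as that solution.

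The second step is the algebraic heart: I would show that applying $\varphi$ coefficient-wise commutes correctly with the derivation, i.e. that $\bfx_\varphi' = \sum \frac{\varphi(\bfx^{(i+1)})}{i!}t^i$ and likewise for $\bfy_\varphi$, which is immediate from term-by-term differentiation of a convergent power series. Then, because $\J \subset \operatorname{Ker}\varphi$ and $\J$ contains all the differential consequences $(Q x_i' - F_i)^{(j)}$ and $(Q y_k - G_k)^{(j)}$ by Lemma~\ref{lem:gens_and_prime}, applying $\varphi$ shows $\varphi\big((Qx_i' - F_i)^{(j)}\big) = 0$ and $\varphi\big((Qy_k - G_k)^{(j)}\big) = 0$ for all $j$. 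These are precisely the statements that the $j$-th Taylor coefficients at $0$ of $Q(\bfmu_\varphi, \bfx_\varphi, u_\varphi)\bfx_\varphi' - \bs{F}(\ldots)$ and of $Q(\ldots)\bfy_\varphi - \bs{G}(\ldots)$ vanish — so those analytic functions vanish identically near $0$. Since $\varphi(Q) \neq 0$, the function $Q(\bfmu_\varphi, \bfx_\varphi, u_\varphi)$ is nonzero at $t=0$ hence nonvanishing on a neighborhood, and we may divide to obtain $\bfx_\varphi' = \bff(\bfx_\varphi, \bfmu_\varphi, u_\varphi)$ and $\bfy_\varphi = \bfg(\bfx_\varphi, \bfmu_\varphi, u_\varphi)$, with $\bfx_\varphi(0) = \varphi(\bfx) = \varphi(\bfxs)$ (here using Notation~\ref{not:SFE}(vi), that $\bftheta$ as variables means $(\bfmu, x_1,\ldots,x_n)$). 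Thus $(\bfx_\varphi, \bfy_\varphi)$ is an analytic solution of the instance $\Sigma(\varphi(\bftheta), u_\varphi)$, and $(\varphi(\bftheta), u_\varphi) \in \Omega$ because $\varphi(Q) \neq 0$. By the uniqueness clause in the definition of $X, Y$, we get $\bfy_\varphi = Y(\varphi(\bftheta), u_\varphi)$, which is the claim.

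I expect the main obstacle to be the convergence bookkeeping in the first step: one must be careful that the formal series $u_\varphi$ being convergent is genuinely enough to force $\bfx_\varphi$ (and then $\bfy_\varphi$) to converge, rather than merely to be formal solutions. The cleanest route is probably to note that the recursion expressing $\varphi(x_i^{(q)})$ in terms of lower-order data is \emph{identical} to the recursion the Picard–Lindelöf / majorant method uses to generate the Taylor expansion of the actual analytic solution $X(\varphi(\bftheta), u_\varphi)$ — which exists and is analytic by \cite[Theorem~2.2.2]{Hille} precisely because $u_\varphi$ is analytic and $\varphi(Q)\neq 0$ — so by induction on $q$ the coefficients of $\bfx_\varphi$ agree with those of that solution, giving both convergence and, in fact, immediately the conclusion $\bfx_\varphi = X(\varphi(\bftheta), u_\varphi)$ and hence $\bfy_\varphi = Y(\varphi(\bftheta), u_\varphi)$. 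Framed this way, the algebraic step (using $\J \subset \operatorname{Ker}\varphi$) is what guarantees that $\bfx_\varphi$ satisfies the ODE recursion in the first place, and the analytic step is a black-box citation; the only real care needed is matching up ``$j$-th derivative at $t=0$'' with ``$j$-th Taylor coefficient times $j!$'' consistently.
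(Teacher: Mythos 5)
Your proposal is correct, and the ``cleanest route'' you settle on in the final paragraph is exactly the paper's argument: the genuine analytic solution $\hat{\bs{x}}, \hat{\bfy}$ is obtained from \cite[Theorem~2.2.2]{Hille}, and one proves by induction on $j$ --- writing $(Qx_i'-F_i)^{(j)} = Qx_i^{(j+1)} + P$ with $P$ of lower order and using $\J\subset\operatorname{Ker}\varphi$ --- that $\varphi\big(x_i^{(j)}\big) = \hat{x}_i^{(j)}(0)$, so that $\bfx_\varphi$ and $\bfy_\varphi$ are the Taylor series of the actual solution, which settles convergence and the identity simultaneously. You correctly diagnosed that your first sketch (verify the formal series satisfies the equations, then invoke uniqueness) would founder on the unproved convergence of $\bfx_\varphi$, and the pivot you make to avoid this is precisely what the paper does.
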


\begin{proof}
A direct computation shows that, for every $k \geqslant 0$, $u_{\varphi}^{(k)}(0) = \varphi(u^{(k)})$. We also
define \[\bs{x}_\varphi := \sum\limits_{i = 0}^\infty \frac{%
\varphi\left( \bfx^{(i)} \right)}{i!} t^i.\]
By the theorem of existence and uniqueness of solutions for differential
equations {\cite[Theorem~2.2.2]{Hille}}, there exist unique \[X\big((\varphi(%
\bfmu),\varphi(\bs{x})), u_\varphi\big), Y\big( (\varphi(%
\bfmu),\varphi(\bs{x})), u_{\varphi} \big) \in
\CC^{\infty}(0)\] satisfying the instance $\Sigma((\varphi(\bfmu
,\varphi(\bs{x})), u_\varphi)$, as in Notation~\ref{not:1}. We
denote these functions by $\hat{\bs{x}}$ and $\hat{\bfy}$,
respectively. 
We prove that 
\begin{equation*}
(\hat{x}_i)^{(j)}(0) = \varphi\big(x_i^{(j)}\big)
\end{equation*}
for every $1 \leqslant i \leqslant n$ and $j \geqslant 0$ by induction on $j$%
. The base case is $j = 0$. Then $(\hat{x}_i)(0) = \varphi(x_i)$, because $%
\varphi(x_i)$ is the initial condition. Assume that, for all $i$ and $k$, $1
\leqslant i \leqslant n$ and $0 \leqslant k \leqslant j$, 
\begin{equation*}
(\hat{x}_i)^{(k)}(0) = \varphi\big(x_i^{(k)}\big).
\end{equation*}
We write the differential polynomial $\left( Q x_i^{\prime }-
F_i\right)^{(j)}$ in the form $Q x_i^{(j + 1)} + P$, where $P$ only involves
derivatives of $\bs{x}$ of order at most $j$. Since this
differential polynomial belongs to
        $\J$,
\begin{equation*}
\varphi\left(x_i^{(j + 1)} \right) = -\frac{\varphi(P)}{\varphi(Q)}. 
\end{equation*}
The inductive hypothesis implies that the right-hand side is equal to 
\begin{equation*}
-\frac{P(\hat{\bs{x}}, \bfmu, u_{\varphi})}{Q(\hat{%
\bs{x}}, \bfmu, u_\varphi)} \Big|_{t = 0} = (\hat{x}%
_i)^{(j + 1)}(0). 
\end{equation*}
Hence, $\hat{\bs{x}}$ and $\bs{x}_\varphi$ have the same
Taylor expansion, so coincide. Using this, one can analogously prove that $%
\hat{\bfy}$ and $\bfy_\varphi$ coincide.
\end{proof}


\begin{notation}
        We denote $\cF_{\bftheta} := \CC(\glssymbolY{parameters}{\bftheta})$ and
        $\glssymbolX{cFu}{\cF_u} := \CC(\glssymbolY{input}{\ensuremath{u}},
                \glssymbolY{input}{\ensuremath{u}}', \ldots)$,
        which are subfields of \glssymbolY{cF}{$\cF$}.
\end{notation}

\begin{lemma}\label{lem:fields_intersection}
For all
\begin{itemize}
\item differential fields $K$  containing 
\glssymbolY{cFu}{$\cF_u$}
and
    
\item $\psi_i\colon \glssymbolY{cF}{\cF} \to K$, $i = 1, 2$,
homomorphisms of differential fields over 
\glssymbolY{cFu}{$\cF_u$},
  \end{itemize}
         $\glssymbolY{input}{\ensuremath{u}},
                \glssymbolY{input}{\ensuremath{u}}', \ldots$
are algebraically independent over $K_0$, where
        \[K_0:= \CC(\psi_1(\cF_{\bftheta}) \cup \psi_2(\cF_{\bftheta}))
                \subset K.\]
\end{lemma}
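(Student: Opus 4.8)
\textbf{Proof plan for Lemma~\ref{lem:fields_intersection}.}
The plan is to argue by transcendence-degree bookkeeping. Set $\ell$-tuples aside and work one derivative at a time: it suffices to show that for every $N \geqslant 0$, the elements $u, u', \ldots, u^{(N)}$ are algebraically independent over $K_0$. Since $\cF$ is generated over $\CC$ by the images of $\bfmu$, $\bfx$, and $u, u', \ldots$, and since $\cT = \R/\J$ with $\J \cap \CC[\bfmu,\bfx]\{u\} = \{0\}$ by Lemma~\ref{lem:empty_intersection}, the elements $u, u', \ldots$ are algebraically independent over $\cF_{\bftheta} = \CC(\bftheta)$ inside $\cF$ (here $\bftheta$ is read as $(\bfmu, x_1, \ldots, x_n)$, as in Notation~\ref{not:SFE}). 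Indeed, an algebraic dependence of $u^{(0)}, \ldots, u^{(N)}$ over $\CC(\bftheta)$ would clear denominators to a nonzero element of $\CC[\bftheta]\{u\}$ lying in $\J$, contradicting Lemma~\ref{lem:empty_intersection}.

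First I would compute the transcendence degree of $\cF$ over $\cF_u = \CC(u, u', \ldots)$. By the above, $\cF = \cF_{\bftheta}(u, u', \ldots)$ has the $u^{(i)}$ algebraically independent over $\CC(\bftheta)$, hence $\operatorname{trdeg}_{\cF_u} \cF = \operatorname{trdeg}_{\CC} \cF_{\bftheta} = s$ (the $s$ generators $\theta_1, \ldots, \theta_s$ are themselves algebraically independent over $\CC$, since $\R$ is a polynomial ring in $\bfmu, \bfx, u, \ldots$ and $\J$ meets $\CC[\bfmu,\bfx]\{u\}$ trivially, so in particular meets $\CC[\bfmu,\bfx]$ trivially). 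Now let $\psi_i \colon \cF \to K$ be homomorphisms of differential fields over $\cF_u$, $i = 1, 2$. Because $\psi_i$ fixes $\cF_u$ pointwise and $\operatorname{trdeg}_{\cF_u}\cF = s$, the image $\psi_i(\cF)$ has transcendence degree at most $s$ over $\cF_u$, and in fact exactly $s$ since $\psi_i$ is an embedding of fields (field homomorphisms are injective); moreover $\psi_i(\cF)$ is generated over $\cF_u$ by $\psi_i(\bftheta)$, so $\operatorname{trdeg}_{\cF_u} \CC(\cF_u \cup \psi_i(\cF_{\bftheta})) = s$, i.e. $\psi_i(\theta_1), \ldots, \psi_i(\theta_s)$ form a transcendence basis of $\psi_i(\cF)$ over $\cF_u$.

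Next I would transfer this to the statement about $u, u', \ldots$ over $K_0$. Fix $N$; suppose for contradiction that $u, u', \ldots, u^{(N)}$ satisfy a nontrivial polynomial relation with coefficients in $K_0 = \CC(\psi_1(\cF_{\bftheta}) \cup \psi_2(\cF_{\bftheta}))$. Clearing denominators, there is a nonzero polynomial relation whose coefficients are polynomials in finitely many of the elements $\psi_1(\theta_k)$, $\psi_2(\theta_k)$. Consider the subfield $L := \cF_u(\psi_1(\bftheta), \psi_2(\bftheta)) \subset K$. On one hand, $L \supset K_0(u, u', \ldots)$, so the supposed relation shows $\operatorname{trdeg}_{\CC} L < \operatorname{trdeg}_{\CC}(K_0(u,u',\ldots))$ would fail only if... — more cleanly: I would count transcendence degree of $L$ over $\CC$ two ways. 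Building $L$ as $\cF_u$ adjoin $\psi_1(\bftheta)$ adjoin $\psi_2(\bftheta)$: each adjunction of an $s$-tuple adds at most $s$, and $\cF_u$ itself is a purely transcendental extension of $\CC$ in the (countably many) independent elements $u^{(0)}, u^{(1)}, \ldots$; but the $\psi_i$ fix $\cF_u$, so $\psi_i(\bftheta)$ is algebraic over $\cF_u$-combined-with-the-other-tuple only through the relations we are tracking. The clean bound is $\operatorname{trdeg}_{\cF_u} L \leqslant 2s$, hence $\operatorname{trdeg}_{\cF_u}\CC(\cF_u \cup \psi_1(\cF_{\bftheta}) \cup \psi_2(\cF_{\bftheta})) = \operatorname{trdeg}_{\cF_u} K_0 \leqslant 2s$, while $u^{(0)}, \ldots, u^{(N)}$ are part of a transcendence basis of $\cF_u$ over $\CC$ and hence are algebraically independent over any subfield of $K$ that is algebraic over $\CC$ — but $K_0$ need not be algebraic over $\CC$, which is exactly where the subtlety lies.

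\textbf{The real mechanism, and the main obstacle.}
The point I would actually push is this: $K_0 \subset K$ and $K \supset \cF_u$, and the key is that $K_0$ is generated over $\CC$ by the images $\psi_i(\theta_k)$, each of which is \emph{algebraic over $\cF_u$}. Indeed $\theta_k \in \cF$, and $\cF$ is algebraic over $\cF_u(\theta_1, \ldots, \theta_s)$ — no wait, it is algebraic over $\cF_u$ adjoined a transcendence basis. The crucial observation is that $\psi_i$ fixes $\cF_u$, so $\psi_i(\cF)$ is an algebraic extension of $\cF_u(\psi_i(\theta_1), \ldots, \psi_i(\theta_s))$, and I claim each $\psi_i(\theta_k)$, while transcendental over $\cF_u$ in general, generates together with $\cF_u$ a field over which $u, u', \ldots$ remain independent precisely because $\psi_i$ is a differential homomorphism fixing $\cF_u$: any algebraic relation among the $u^{(j)}$ over $K_0$ pulls back, via the fact that the $u^{(j)}$ are themselves in $\cF_u$ and fixed, to a relation internal to $K$ not involving the $u^{(j)}$ as free variables. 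The slick way: $u, u', \ldots$ are algebraically independent over $\CC$ inside $\cF_u$; a subfield $K_0$ over which they became dependent would have to contain an algebraic-dependence witness, forcing $\operatorname{trdeg}_\CC K_0(u,u',\ldots) < \operatorname{trdeg}_\CC K_0 + \aleph_0$ in a way incompatible with $\operatorname{trdeg}_{\cF_u} K_0 < \infty$ together with $\operatorname{trdeg}_\CC \cF_u = \aleph_0$; concretely, $\operatorname{trdeg}_\CC K = \operatorname{trdeg}_\CC \cF_u + \operatorname{trdeg}_{\cF_u} K = \aleph_0$, and since $K_0 \supset \cF_u$-is-false-in-general — hmm. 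I expect the genuinely delicate step is organizing this so that "the $u^{(j)}$ are independent over $K_0$" follows from "$K_0$ is obtained from $\CC$ by adjoining finitely-transcendence-degree-worth of elements each algebraic over $\cF_u$" plus "the $u^{(j)}$ are a transcendence basis of $\cF_u/\CC$": the right statement is that $\operatorname{trdeg}_{\cF_u} K_0(u, u', \ldots) = \operatorname{trdeg}_{\cF_u} K_0$ because $\cF_u \subset K_0$-completed-appropriately, and I would execute it by exhibiting $K_0$ inside the compositum $\cF_u \cdot \psi_1(\cF) \cdot \psi_2(\cF)$, noting each $\psi_i(\cF)$ has transcendence degree $s$ over $\cF_u$, so $K_0$ has finite transcendence degree over $\cF_u$, whence adjoining the $u^{(j)}$ (already in $\cF_u \subset K_0$) adds nothing and any relation among them over $K_0$ is a relation over $\cF_u$ — contradicting their independence over $\CC$ hence over $\cF_u$... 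This circularity is the obstacle. I believe the correct clean route, and the one I would write up, is: replace "independent over $K_0$" by the equivalent "$\operatorname{trdeg}_{K_0} K_0(u, u', \ldots) = \aleph_0$", show $K_0 \subseteq \overline{\cF_u}^{\,\mathrm{alg}}(\psi_1(\bftheta), \psi_2(\bftheta))$ has the $u^{(j)}$ still algebraically independent over it by a direct transcendence-degree count using $\operatorname{trdeg}_{\cF_u}\psi_i(\cF) = s$ and that $\psi_i|_{\cF_u} = \operatorname{id}$, and then observe the $u^{(j)} \in \cF_u$ are a transcendence basis of $\cF_u$ over $\CC$ so cannot become dependent after a base change that is itself finitely-generated-over-$\cF_u$-in-transcendence-degree. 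The main obstacle, concretely, is correctly handling that $K_0$ can be infinite-dimensional over $\CC$ yet is finite-transcendence-degree over $\cF_u$, and extracting independence of the $u^{(j)}$ over $K_0$ from independence over $\CC$ via this intermediate field $\cF_u$.
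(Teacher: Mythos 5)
There is a genuine gap, and it sits exactly where you flagged ``the main obstacle.'' Your transcendence-degree bookkeeping can only prove that \emph{all but finitely many} of the $u^{(j)}$ remain algebraically independent over $K_0$: since $K_0$ is generated over $\CC$ by the $2s$ elements $\psi_1(\theta_k),\psi_2(\theta_k)$, we have $\operatorname{trdeg}_{\CC}K_0\leqslant 2s$, so at most $2s$ indices $h$ can have $u^{(h)}$ algebraic over $\CC\big(K_0,u,\dots,u^{(h-1)}\big)$. But counting alone cannot push this bound from ``at most $2s$'' down to ``zero,'' and your proposed closing principle --- that the $u^{(j)}$ ``cannot become dependent after a base change that is finitely generated in transcendence degree'' --- is false as a general statement: $\CC(u)$ has transcendence degree $1$ over $\CC$, yet $u$ is certainly not independent over it. Nothing in your argument rules out that $K_0$ contains such an element; note also that $K_0$ does \emph{not} contain $\cF_u$ (it is generated only by the images of $\cF_{\bftheta}$), so the reductions you attempt via ``$u^{(j)}\in\cF_u\subset K_0$'' do not apply. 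This is why your write-up keeps circling back on itself.

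The missing idea is differential, not field-theoretic. Because $\psi_1,\psi_2$ are homomorphisms of \emph{differential} fields over $\cF_u$ and because in $\cT$ one has $\bfmu'=0$ and $x_i'=F_i(\bfmu,\bfx,u)/Q(\bfmu,\bfx,u)$, the generators of $K_0$ have derivatives lying in $K_0(u)$, i.e.\ $K_0'\subset K_0(u)$. The paper's proof introduces $H:=\{h\mid u^{(h)}\text{ is algebraic over }\CC(K_0,u,\dots,u^{(h-1)})\}$, shows $H$ is finite by exactly the transcendence count you set up, and then shows $H=\varnothing$ as follows: if $h$ were the maximal element of $H$, take a minimal-degree relation $P(u,\dots,u^{(h)})=0$ with coefficients in $K_0$ depending on $z_h$; differentiating it and using $K_0'\subset K_0(u)$ together with $\tfrac{\partial P}{\partial z_h}(u,\dots,u^{(h)})\neq 0$ forces $u^{(h+1)}$ into $K_0\big(u,\dots,u^{(h)}\big)$, so $h+1\in H$, contradicting maximality. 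Without this step (or an equivalent use of the differential structure), the lemma does not follow; your proposal as written does not contain it.
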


\begin{proof}
  Consider the following set of indices
  \[
    H := \big\{ h\in\mathbb{Z}_{\geqslant 0} \:|\: u^{(h)} \text{ is algebraic over } \CC\big(K_0, u, \ldots, u^{(h - 1)}\big) \big\} \subset \mathbb{Z}_{\geqslant 0}.  
  \]
  
  \textbf{Claim:} \emph{The set $H$ is finite.} 
  Assume the contrary and let $H = \{h_1, h_2, \ldots\}$, where $h_1 < h_2 < \ldots$.
  Consider $M := 2s + 1$, where $s = |\bftheta|$.
  Then the definition of $H$ implies that
  \[
    \operatorname{trdeg}_\CC \CC\big(K_0, u, \ldots, u^{(h_M)}\big) \leqslant \operatorname{trdeg}_\CC K_0 + h_M + 1 - M.  
  \]
  Since $\operatorname{trdeg}_\CC K_0 \leqslant 2s$, the latter expression does not exceed $h_M$.
  On the other hand, since $\CC\big(K_0, u, \ldots, u^{(h_M)}\big)$ contains $\CC\big(u, \ldots, u^{(h_M)}\big)$, its transcendence degree over $\CC$ is at least $h_M + 1$.
  The obtained contradiction proves the claim.
  
  \textbf{Claim:} \emph{The set $H$ is empty}.
  Assume the contrary.
  Since we already know that $H$ is finite, we can consider the maximal element in $H$, say $h$.
  Let $P(z_0, \ldots, z_h) \in K_0[z_0, \ldots, z_h]$ be a polynomial of the smallest degree such that 
  $P\big(u, u', \ldots, u^{(h)}\big) = 0$ and $P$ depends on $z_h$.
  Then $\frac{\partial P}{\partial{z}_h} \big(u, u', \ldots, u^{(h)}\big) \neq 0$.
  Using $K_0' \subset K_0(u)$, we obtain
  \[
  u^{(h + 1)}\cdot \frac{\partial P}{\partial{z}_h} \big(u, u', \ldots, u^{(h)}\big) \in K_0\big(u, u', \ldots, u^{(h)}\big).
  \]
  Hence, $h + 1 \in H$, so the contradiction with the maximality of $h$ proves the claim.
  
  The latter claim proves the lemma.
\end{proof}


\begin{lemma}\label{lem:two_algebras}
Let $Q_2 \in \CC[\glssymbolY{parameters}{\bftheta}]$ be any non-zero coefficient of
    \glssymbolY{Q}{$Q$}
viewed as a polynomial in
\glssymbolY{input}{$u$}.
For every
\begin{itemize}
\item differential field $K$ over
   \glssymbolY{cFu}{$\cF_u$},
 \item
  injective $\CC\{\glssymbolY{input}{\ensuremath{u}}\}$-algebra homomorphisms $\beta_1, \beta_2\colon \glssymbolY{cT}{\cT} \to K$ 
  such that 
  $\beta_1|_{\CC\{\glssymbolY{outputs}{\ensuremath{\bfy}}, \glssymbolY{input}{\ensuremath{u}}\}} =\beta_2|_{\CC\{\glssymbolY{outputs}{\ensuremath{\bfy}}, \glssymbolY{input}{\ensuremath{u}}\}}$,  
  \item $\CC$-algebra  homomorphism $\gamma\colon B \to \CC$, where 
  \[B := \CC[\beta_1(\glssymbolY{parameters}{\bftheta}), \beta_2(\glssymbolY{parameters}{\bftheta}), 1/Q_2(\beta_1(\glssymbolY{parameters}{\bftheta})), 1/Q_2(\beta_2(\glssymbolY{parameters}{\bftheta}))] \subset K,\]
  \item  parameter $\theta \in \glssymbolY{parameters}{\bftheta}$,
  \end{itemize}
   \[\gamma\circ\beta_1(\theta) \in S'_{\theta}(\gamma\circ\beta_2(\glssymbolY{parameters}{\bftheta})).\]
\end{lemma}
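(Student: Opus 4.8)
The plan is to reduce the assertion to Lemma~\ref{lem:solution} by producing two $\CC$-algebra homomorphisms $\R\to\CC$ that realize $\beta_1$ and $\beta_2$ followed by evaluation of $u,u',u'',\ldots$ at $t=0$. Write $\tilde{\bftheta}:=\gamma\circ\beta_1(\bftheta)$ and $\hat{\bftheta}:=\gamma\circ\beta_2(\bftheta)$ in $\CC^s$. By the definition of $S'_\theta$, it suffices to show that $\Omega_{\tilde{\bftheta}}\neq\varnothing$ and that $Y(\tilde{\bftheta},\hat u)=Y(\hat{\bftheta},\hat u)$ for every $\hat u\in\Omega_{\tilde{\bftheta}}\cap\Omega_{\hat{\bftheta}}$; indeed, $\tilde{\bftheta}$ then witnesses $\gamma\circ\beta_1(\theta)\in S'_\theta(\hat{\bftheta})$.

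First I would collect the non-degeneracy facts. Since $\cT$ is a domain and $K$ is a field, each $\beta_i$ extends to a homomorphism of differential fields $\psi_i\colon\cF\to K$ over $\cF_{u}$, and $\CC\big(\psi_1(\cF_{\bftheta})\cup\psi_2(\cF_{\bftheta})\big)=\CC(\beta_1(\bftheta),\beta_2(\bftheta))$, so Lemma~\ref{lem:fields_intersection} shows that $u,u',u'',\ldots$ are algebraically independent over $\CC(\beta_1(\bftheta),\beta_2(\bftheta))$. Because $1/Q_2(\beta_i(\bftheta))\in B$, the scalar $\gamma(Q_2(\beta_i(\bftheta)))=Q_2(\gamma\circ\beta_i(\bftheta))$ is invertible in $\CC$, hence nonzero; since $Q_2$ is a nonzero coefficient of $Q$ viewed as a polynomial in $u$, the polynomials $Q(\tilde{\bftheta},u),Q(\hat{\bftheta},u)\in\CC[u]$ are nonzero (so, taking a suitable constant input, $\Omega_{\tilde{\bftheta}}\neq\varnothing$ and $\Omega_{\hat{\bftheta}}\neq\varnothing$), and, using that $u$ is transcendental over $\CC(\beta_i(\bftheta))$, the element $Q(\beta_i(\bftheta),u)\in K$ is nonzero.

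Now the core step. Fix $\hat u\in\Omega_{\tilde{\bftheta}}\cap\Omega_{\hat{\bftheta}}$, and let $D\subset K$ be the localization of the polynomial ring $\CC[\beta_1(\bftheta),\beta_2(\bftheta)][u,u',u'',\ldots]$ (it is a polynomial ring by the previous paragraph) obtained by inverting $Q_2(\beta_1(\bftheta))$, $Q_2(\beta_2(\bftheta))$, $Q(\beta_1(\bftheta),u)$, $Q(\beta_2(\bftheta),u)$. I would extend $\gamma|_{\CC[\beta_1(\bftheta),\beta_2(\bftheta)]}$ to a $\CC$-algebra homomorphism $\delta\colon D\to\CC$ by $u^{(j)}\mapsto\hat u^{(j)}(0)$ for all $j\geqslant0$: this is legitimate because $\gamma(Q_2(\beta_i(\bftheta)))\neq0$ and, as $\hat u$ lies in both $\Omega$'s, $Q(\gamma\circ\beta_i(\bftheta),\hat u(0))\neq0$, so all inverted elements are sent to nonzero scalars; note also $\delta|_B=\gamma$. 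Next, with $\pi\colon\R\twoheadrightarrow\cT$ the quotient map, put $\bar\beta_i:=\beta_i\circ\pi\colon\R\to K$ (so $\J\subset\operatorname{Ker}\bar\beta_i$). I claim $\bar\beta_i(\R)\subset D$: the generators $\mu_k$ and $u^{(j)}$ of $\R$ clearly land in $D$, and for each $x_k^{(j)},y_k^{(j)}$ the triangular-set reduction in the proof of Lemma~\ref{lem:gens_and_prime} furnishes $N\geqslant0$ and $P_0\in\CC[\bfmu,\bfx]\{u\}$ with $Q^Nx_k^{(j)}-P_0\in\J$ (and similarly for $y_k^{(j)}$), whence $\bar\beta_i(x_k^{(j)})=P_0(\beta_i(\bftheta),u,u',\ldots)/Q(\beta_i(\bftheta),u)^N\in D$. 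Consequently $\varphi_i:=\delta\circ\bar\beta_i\colon\R\to\CC$ are well-defined $\CC$-algebra homomorphisms with $\J\subset\operatorname{Ker}\varphi_i$.

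Finally I would apply Lemma~\ref{lem:solution} to $\varphi_1$ and $\varphi_2$: one checks that $\varphi_i(\bftheta)=\gamma\circ\beta_i(\bftheta)$ (equal to $\tilde{\bftheta}$, resp.\ $\hat{\bftheta}$), that $\varphi_i(Q)=Q(\gamma\circ\beta_i(\bftheta),\hat u(0))\neq0$ since $\hat u\in\Omega_{\tilde{\bftheta}}\cap\Omega_{\hat{\bftheta}}$, and that $\varphi_i(u^{(j)})=\hat u^{(j)}(0)$, so that $u_{\varphi_i}$ equals the Taylor expansion of $\hat u$ at $0$, which converges near $0$ because $\hat u\in\cinfty$. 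Lemma~\ref{lem:solution} then gives $\bfy_{\varphi_1}=Y(\tilde{\bftheta},\hat u)$ and $\bfy_{\varphi_2}=Y(\hat{\bftheta},\hat u)$; and since $\beta_1$ and $\beta_2$ agree on the image of $\CC\{\bfy,u\}$, so do $\varphi_1$ and $\varphi_2$ on $\bfy$ and $u$, hence $\bfy_{\varphi_1}=\bfy_{\varphi_2}$ and $Y(\tilde{\bftheta},\hat u)=Y(\hat{\bftheta},\hat u)$, as required. The step I expect to be the main obstacle is the bookkeeping around $D$ and the claim $\bar\beta_i(\R)\subset D$: it combines the triangular reduction of Lemma~\ref{lem:gens_and_prime} (to push an arbitrary element of $\R$ down to $\CC[\bfmu,\bfx]\{u\}$ modulo $\J$ and powers of $Q$) with the algebraic independence from Lemma~\ref{lem:fields_intersection} (which is precisely what makes the evaluation $u^{(j)}\mapsto\hat u^{(j)}(0)$ compatible with $\gamma$); everything else is routine diagram chasing.
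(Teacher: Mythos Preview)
Your proposal is correct and follows essentially the same approach as the paper's proof: both define $\tilde{\bftheta},\hat{\bftheta}$ via $\gamma\circ\beta_i$, invoke Lemma~\ref{lem:fields_intersection} to get algebraic independence of $u,u',\ldots$ over $B$, extend $\gamma$ first to the polynomial ring in the $u^{(j)}$ by $u^{(j)}\mapsto\hat u^{(j)}(0)$ and then to the localization inverting $\beta_i(Q)$, and finally apply Lemma~\ref{lem:solution} to $\gamma\circ\beta_i\circ\pi$ and compare via the agreement of $\beta_1,\beta_2$ on $\CC\{\bfy,u\}$. The only cosmetic difference is that the paper records the containment $\beta_i(\cT)\subset B\{u\}[1/\beta_1(Q),1/\beta_2(Q)]$ in one line as ``$\cT\subset\CC\{u\}[\bftheta,1/Q]$'', whereas you unpack this via the triangular reduction of Lemma~\ref{lem:gens_and_prime}; these are the same fact.
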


\begin{proof}
  We set \[\tilde{\bftheta} := \gamma\circ\beta_1(\bftheta)\quad 
  \text{and}\quad \hat{\bftheta} := \gamma\circ\beta_2(\bftheta).\]
  Since $\gamma(Q_2(\beta_1(\bftheta))) \neq 0$ and $\gamma(Q_2(\beta_2(\bftheta))) \neq 0$,
 the  sets $\Omega_{\hat{\bftheta}}$ and $\Omega_{\tilde{\bftheta}}$ are nonempty.
  Consider $\hat{u} \in \Omega_{\hat{\bftheta}} \cap \Omega_{\tilde{\bftheta}}$.
  Lemma~\ref{lem:fields_intersection} applied to the extensions of $\beta_1$ and $\beta_2$
  to $\cF$, implies that $u, u', \ldots$ are algebraically independent over $B$.
  Hence, the homomorphism $\gamma$ can be extended to 
  $\gamma\colon B\{u\} \to \CC$ in such a way that  
  $\gamma\big(u^{(h)}\big) = \hat{u}^{(h)}(0)$ for every $h \geqslant 0$.
  Hence, since $\hat{u} \in \Omega_{\hat{\bftheta}} \cap
    \Omega_{\tilde{\bftheta}}$,
  \[
  \gamma(Q(\beta_1(\bftheta), u))\ne 0\quad\text{and}\quad\gamma(Q(\beta_2(\bftheta), u))\ne 0.
  \] 
  So, $\gamma$ can be extended to
  \[
    \gamma\colon B\{u\}\left[ 1/\beta_1(Q), 1/\beta_2(Q) \right] \to \CC,
  \]
  where the domain contains both $\beta_1(\cT)$ and $\beta_2(\cT)$ 
because $\cT \subset \CC\{u\}[\bftheta,1 / Q]$. 
  Let $\pi\colon \R \to \cT$ be the natural
  surjection.
  We apply Lemma~\ref{lem:solution} to $\gamma\circ\beta_1 \circ\pi$ and $\gamma\circ\beta_2\circ\pi$  and obtain
  \[
  Y(\tilde{\bftheta}, \hat{u}) = \sum\limits_{j = 0} \frac{\gamma\circ\beta_1\circ\pi\big(\bs{y}^{(j)}\big)}{j!} t^j\quad \text{and}\quad Y(\hat{\bftheta}, \hat{u}) = \sum\limits_{j = 0} \frac{\gamma\circ\beta_2 \circ \pi\big(\bs{y}^{(j)}\big)}{j!} t^j.
  \]
  Since $\beta_1\big(\pi\big(\bs{y}^{(j)}\big)\big) = \beta_2\big(\pi\big(\bs{y}^{(j)}\big)\big)$, 
  we have $Y(\hat{\bftheta}, \hat{u}) = Y(\tilde{\bftheta}, \hat{u})$.
  Hence, $\tilde{\theta} \in S'_{\theta} (\hat{\bftheta})$.
\end{proof}


\begin{lemma}\label{lem:countable_intersection}
  Let $A_1, A_2, \ldots$ be nonempty Zariski open subsets of $\glssymbolY{analytic}{\ensuremath{\cinfty}}$.
  Then $\bigcap\limits_{i = 1}^\infty A_i$ is nonempty.
\end{lemma}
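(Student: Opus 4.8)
The plan is to reduce the statement to the purely combinatorial construction of a single sequence of Taylor coefficients. Unwinding the definition of a Zariski open subset of $\cinfty$, each $A_i$ has the form $A_i = \{\hat u \in \cinfty \mid P_i(\hat u(0), \hat u'(0), \ldots, \hat u^{(h_i)}(0)) \neq 0\}$ for some $h_i \in \ZZ_{\geqslant 0}$ and some \emph{nonzero} polynomial $P_i \in \CC[u_0, \ldots, u_{h_i}]$. Hence it suffices to produce complex numbers $a_0, a_1, a_2, \ldots$ with $|a_k| \leqslant 1$ for all $k$ and $P_i(a_0, \ldots, a_{h_i}) \neq 0$ for all $i$: indeed, the series $\hat u(t) := \sum_{k \geqslant 0} a_k t^k / k!$ is then dominated by $e^{|t|}$, so $\hat u$ is entire and in particular $\hat u \in \cinfty$, and since $\hat u^{(k)}(0) = a_k$ we get $\hat u \in \bigcap_{i} A_i$.

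First I would fix the sequence one coefficient at a time while maintaining the invariant that, after $a_0, \ldots, a_{k-1}$ have been chosen, the partially specialized polynomial $P_i(a_0, \ldots, a_{k-1}, u_k, \ldots, u_{h_i})$ is a nonzero element of $\CC[u_k, \ldots, u_{h_i}]$ for every $i$ (when $h_i \leqslant k-1$ this simply says $P_i(a_0, \ldots, a_{h_i}) \neq 0$). The base case $k = 0$ is exactly the hypothesis that each $P_i$ is nonzero. For the inductive step one checks that, for each fixed $i$, only finitely many values of $a_k$ destroy the invariant: if $h_i = k$ one avoids the finitely many roots of the nonzero univariate polynomial $P_i(a_0, \ldots, a_{k-1}, u_k)$; and if $h_i > k$ one expands $P_i(a_0, \ldots, a_{k-1}, u_k, \ldots, u_{h_i})$ as a polynomial in $u_{k+1}, \ldots, u_{h_i}$, selects one coefficient that is a nonzero polynomial in $u_k$, and avoids its finitely many roots. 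Taking the union over the countably many indices $i$, only countably many values of $a_k$ are forbidden, so some $a_k$ in the (uncountable) closed unit disk of $\CC$ works, and the invariant is preserved.

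Finally, I would observe that when $a_{h_i}$ is chosen (that is, at step $k = h_i$) the preserved invariant reads precisely $P_i(a_0, \ldots, a_{h_i}) \neq 0$, so the resulting sequence satisfies every required inequation, completing the construction. The main obstacle is precisely this inductive step, and in particular the need to carry along the invariant that the specialized polynomials never collapse to the zero polynomial: choosing $a_k$ merely to kill the roots of the constraints that have already become univariate would not, by itself, prevent some later $P_i$ from being specialized to $0$, after which it could never again be made nonzero. (One could alternatively invoke the Baire category theorem on the compact metric space $\{(a_k)_{k\geqslant 0} \mid |a_k| \leqslant 1\}$, on which the complement of each $A_i$ cuts out a closed nowhere dense subset, but the elementary inductive construction above is self-contained.)
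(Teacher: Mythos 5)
Your proposal is correct and follows essentially the same route as the paper's proof: both arguments inductively choose the Taylor coefficients from the unit disk while maintaining the invariant that each partially specialized $P_i$ remains a nonzero polynomial, use countability of the forbidden values at each step against the uncountability of the disk, and conclude convergence from the bound $|a_k|\leqslant 1$. The subtlety you flag--that one must prevent the specializations from collapsing to the zero polynomial, not merely dodge roots of already-univariate constraints--is exactly the invariant the paper's proof carries as well.
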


\begin{proof}
  For every $i \geqslant 0$, let $P_i(u) \in \CC\{u\}$ denote a differential polynomial defining the complement to $A_i$.
  Let $h_i := \ord_u P_i$.
  We will inductively construct an infinite sequence of complex numbers $u_0, u_1, u_2, \ldots$ such that
  \begin{itemize}
    \item $|u_i| < 1$ for every $i \geqslant 0$;
    \item for every $i \geqslant 0$ and $j \geqslant 1$, $P_j$ does not vanish after substituting
    $u_k$ instead of $u^{(k)}$ for every $k \leqslant i$.
  \end{itemize}
  Assume that we have already constructed first $i \geqslant 0$ elements $u_0, \ldots, u_{i - 1}$ of the sequence.
  For every $j \geqslant 1$, there are only finitely many complex numbers $z$ such that 
  $P_j$ vanishes after substituting $u_k$ instead of $u^{(k)}$ for every $k < i$ 
  and $z$ instead of $u^{(i)}$. 
  Thus, at most countably many values of $u^{(i)}$ will vanish 
  at least one
  of $P_1, P_2, \ldots$.
  Since there are uncountably many complex numbers $z$ with $|z| < 1$, there exists $u_i$
  satisfying both requirements.
  
  We set $\hat{u}(t) := \sum\limits_{i = 0}^\infty u_i \frac{t^i}{i!}$.
  Since $|u_i| < 1$ for every $i \geqslant 0$, $\hat{u}$ defines an element of $\CC^\infty(0)$.
  For every $j \geqslant 1$, we have
  \[
  P_j(\hat{u})|_{t = 0} = P_j(u_0, u_1, \ldots, u_{h_j}) \neq 0.  
  \]
  Hence, $\hat{u} \in A_j$ for every $j \geqslant 1$.
\end{proof}

\begin{lemma}\label{lem:models}
 For every extension $E \subset F$  of differential fields and $a \in F$
  transcendental over $E$,
  there exists an extension of differential fields $F \subset K$ and infinitely many
  differential automorphisms
  $\alpha_1, \alpha_2, \ldots$ of $K$ over $E$ such that 
 the elements $\alpha_1(a), \alpha_2(a), \ldots$ are all distinct.
\end{lemma}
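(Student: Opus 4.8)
The plan is to deduce this from the model theory of differentially closed fields of characteristic zero. Recall that the theory $\mathrm{DCF}_0$ is the model companion of the theory of differential fields of characteristic zero; in particular every differential field of characteristic zero embeds, as a differential field, into a model of $\mathrm{DCF}_0$, and $\mathrm{DCF}_0$ is $\omega$-stable, so it has saturated models of every infinite cardinality. Three further standard facts will be used: a non-algebraic type over a parameter set of size $< \kappa$ has infinitely many realizations in a $\kappa$-saturated model; a $\kappa$-strongly homogeneous model realizes any two elements of the same type over a parameter set of size $< \kappa$ by an automorphism fixing that set pointwise; and, crucially, for a differential subfield $E$ of a model of $\mathrm{DCF}_0$ one has $\operatorname{acl}_{\mathrm{DCF}_0}(E) = E^{\mathrm{alg}}$, the field-theoretic algebraic closure.

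Concretely, first I would fix a differentially closed field $\mathcal{U} \supseteq F$ of characteristic zero that is $\kappa$-saturated and $\kappa$-strongly homogeneous for some cardinal $\kappa > |F|$ (obtained by embedding $F$ into a model of $\mathrm{DCF}_0$ and passing to a large saturated elementary extension), and set $K := \mathcal{U}$ with the restricted derivation; then $F \subseteq K$ is an extension of differential fields. Next, since $a$ is transcendental over $E$ we have $a \notin E^{\mathrm{alg}} = \operatorname{acl}(E)$, so the type $p := \operatorname{tp}(a/E)$ computed in $\mathcal{U}$ is non-algebraic; as $|E| \leqslant |F| < \kappa$, $\kappa$-saturation yields infinitely many pairwise distinct realizations $a = b_1, b_2, b_3, \ldots$ of $p$ in $K$. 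Finally, for each $i$ the equality $\operatorname{tp}(b_i/E) = p = \operatorname{tp}(a/E)$ together with $\kappa$-strong homogeneity provides an automorphism $\alpha_i$ of $\mathcal{U}$ fixing $E$ pointwise with $\alpha_i(a) = b_i$ (and one may take $\alpha_1 = \mathrm{id}$, corresponding to $b_1 = a$). Each $\alpha_i$ is a differential automorphism of $K$ because the derivation is part of the language; it is an automorphism over $E$ by construction; and the images $\alpha_1(a), \alpha_2(a), \ldots = b_1, b_2, \ldots$ are pairwise distinct. This produces the required $K$ and $\alpha_1, \alpha_2, \ldots$.

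The one substantive, non-formal ingredient is the identification $\operatorname{acl}_{\mathrm{DCF}_0}(E) = E^{\mathrm{alg}}$: this is exactly what converts the hypothesis that $a$ is transcendental over $E$ into the statement that $\operatorname{tp}(a/E)$ is non-algebraic, hence into the existence of infinitely many realizations; everything else is general model theory. If one insisted on avoiding model theory, the same conclusion could be extracted from Kolchin's theory of universal differential field extensions, but the genuine obstacle in a purely hands-on approach is the amalgamation step: one must glue $F$ to a ``generic second copy'' of $E\langle a \rangle$ over the arbitrary (possibly large) base $E$ while retaining enough control to produce the conjugating automorphisms — and this is precisely the role played by saturation and homogeneity. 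A case analysis (translations $a \mapsto a + n$ when $a$ is differentially transcendental over $E$; scaling or permuting independent solutions of the minimal differential polynomial of $a$ when $a$ is differentially algebraic over $E$) indicates what the automorphisms should do but still leaves this amalgamation with $F$ to be handled, so I would favor the model-theoretic argument.
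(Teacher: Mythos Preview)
Your proof is correct and, like the paper's, is model-theoretic via $\mathrm{DCF}_0$; the key input in both is that model-theoretic algebraic closure in $\mathrm{DCF}_0$ coincides with field-theoretic algebraic closure, so ``$a$ transcendental over $E$'' becomes ``$\operatorname{tp}(a/E)$ non-algebraic.'' The technical implementation, however, is dual to the paper's. You take $K$ to be a large $\kappa$-saturated, $\kappa$-strongly homogeneous model and use saturation to produce infinitely many realizations and strong homogeneity to produce the automorphisms. The paper instead takes $K$ to be the differential \emph{closure} of $F$ (a prime model), uses atomicity of $K$ over $E$ to find an isolating formula $\varphi(x)$ for $\operatorname{tp}(a/E)$, invokes the fact that a non-algebraic formula in $\mathrm{DCF}_0$ has infinitely many realizations, and then extends each partial isomorphism $E\langle a\rangle \to E\langle a_i\rangle$ to an automorphism of $K$ via uniqueness of the differential closure. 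Your version is arguably more streamlined (no need to discuss isolated types or atomicity), at the cost of working in a much larger $K$; the paper's version keeps $K$ as small as possible. Either is perfectly adequate for the application in the paper, which only needs the existence of such a $K$.
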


\begin{proof}
  In this proof, we will use some methods and notions from model theory of differential fields~\cite{Marker}.
  Let $K$ be a differential closure of $F$~\cite[Definition, p.~49]{Marker}.
  For every $b \in K$, we denote the differential subfield generated by $b$ and $E$ in $K$ by $E \langle b \rangle$.
  \cite[Theorem~2.9(a)]{Marker} implies that $K$ is atomic over $E$, so there is a first-order formula
  $\varphi(x)$ in the language of differential fields with parameters from $E$ such that,
 for all $b \in K$,  $\varphi(b)$ is equivalent to
  \[
  \exists\text{ differential field isomorphism } f : E\langle a\rangle\to E\langle b\rangle \text{ over } E \text{ such that } f(a)=b.
  \]
  Since $a$ is transcendental over $E$, \cite[Lemma~5.1]{Marker} implies that there are infinitely 
  many elements $a_1, a_2, \ldots \in K$ such that, for every $i\geqslant 1$, $\varphi(a_i)$ is true.
  For every $i \geqslant 1$, we introduce an isomorphism 
  $\alpha_i \colon E\langle a\rangle \to E\langle a_i \rangle$ sending $a$ to $a_i$.
  Since $K$ is a differential closure of both $E\langle a \rangle$ and $E\langle a_i \rangle$,
  \cite[Corollary~2.10]{Marker} implies that $\alpha_i$ can be extended to an isomorphism of
  differential fields $K \to K$.
\end{proof}


\subsection{Algebraic Criterion: Constructive Version}

\label{subsec:alg_crit} 

Although Proposition~\ref{prop:equiv_fields} provides us with an algebraic
criterion, it involves the quotient field of an infinitely generated
algebra, so is not constructive enough.
In this section, we will show how to find the order $\bs{h}'$ of derivatives that is sufficient to consider to make conclusion about identifiability.
This will reduce deciding identifiability to a question about the size of the generic fiber
of a projection of finite-dimensional algebraic varieties.

\begin{notation}\label{not:subs_der}
  Let $\bs{z}$ be an $\ell$-tuple of differential indeterminates, and $\bs{h} \in \mathbb{Z}_{\geqslant 0}^\ell$.
  Then we define
  \[
    \bs{z}_{\bs{h}} := \{ z_i^{(j)} \mid 0 \leqslant j < h_i,\; 1 \leqslant i \leqslant \ell \}.
  \]
\end{notation}

\begin{notation}
\label{not:Sh}  Let $\bs{h} = (h_1, \ldots, h_m) \in \mathbb{Z}%
^m_{\geqslant 0}$.  We construct 
a set of differential polynomials $\cS_{\bs{h}}
$ by the following procedure. 
    \begin{enumerate}[(a)]
\item We put $\left( \glssymbolY{Q}{\ensuremath{Q}} y_k - G_k\right)^{(j)}$ (see the beginning of Section~\ref{subsec:alg_prep} for notation) into $\cS_{\bs{h}}$
for every $1 \leqslant k \leqslant m$ and $0 \leqslant j < h_k$; 

\item \label{not3:Step2} While there exist $1 \leqslant i \leqslant n$ and $%
j \geqslant 1$ such that $x_i^{(j)}$ appears in some element of $\cS_{\bs{h}}$ but  
$\left( \glssymbolY{Q}{\ensuremath{Q}} x_i^{\prime }- F_i\right)^{(j-1)}\notin \cS_{\bs{h}}$, we
put $\left( \glssymbolY{Q}{\ensuremath{Q}} x_i^{\prime }- F_i\right)^{(j-1)}$  into $\cS_{\bs{h}}$. 
\end{enumerate}
Since the orders of all variables involved in the above procedure do not
exceed $\max\{ h_1, \ldots, h_m \}$,  the second step will terminate after a
finite number of iterations.

We introduce tuples $\Out(\bs{h}) \in \mathbb{Z}_{\geqslant 0}^m, \glssymbolX{State}{\State}(\bs{h}) \in \mathbb{Z}_{\geqslant 0}^n$, and $\glssymbolX{In}{\In}(\bs{h}) \in \mathbb{Z}_{\geqslant 0}^1$ bounding the orders of derivatives of outputs, states, and input appearing in $\cS_{\bs{h}}$:
\begin{align*}
    &\Out(\bs{h})_i := \max \{\glssymbolY{difforder}{\ord}_{y_i} P \mid P \in \cS_{\bs{h}}\} + 1, &&\text{ for } i = 1, \ldots, m,\\
    &\State(\bs{h})_i \; \; := \max \{\glssymbolY{difforder}{\ord}_{x_i} P \mid P \in \cS_{\bs{h}}\} + 1, &&\text{ for } i = 1, \ldots, n,\\
    &\In(\bs{h}) \; := \max \{\glssymbolY{difforder}{\ord}_{u} \;P \mid P \in \cS_{\bs{h}}\} + 1. 
\end{align*}
Observe that, due to the construction of $\cS_{\bs{h}}$, we will always have $\Out(\bs{h}) = \bs{h}$, so we will use $\bs{h}$ instead of $\Out(\bs{h})$ to keep the notation simple.
Using Notation~\ref{not:subs_der}, we can express all the derivatives appearing in $\cS_{\bs{h}}$ as $\bs{y}_{\bs{h}}, \bs{x}_{\State(\bs{h})},$ and $u_{\In(\bs{h})}$.

Finally, we introduce the smallest polynomial ring $\R_{\bs{h}}$ containing $\cS_{\bs{h}}$ and the corresponding ideal $\J_{\bs{h}}$ as
\[
  \R_{\bs{h}} := \CC[\glssymbolY{systemparams}{\bfmu}, \bs{x}_{\State(\bs{h})}, \bs{y}_{\bs{h}},u_{\In(\bs{h})}] \quad\text{ and }\quad \J_{\bs{h}} := (\cS_{\bs{h}}):\glssymbolY{Q}{\ensuremath{Q}}^\infty \subset \R_{\bs{h}}. 
\]
\end{notation}

\begin{remark}
\label{rem:triangular}  For every $\bs{h} \in \mathbb{Z}%
^m_{\geqslant 0}$, $\cS_{\bs{h}}$, being a subset of $\cS$, is a
triangular set for $\J_{\bs{h}}$  with respect to every ordering
described in the proof of Lemma~\ref{lem:gens_and_prime} such that  the free
variables are exactly $\glssymbolY{states}{\ensuremath{\bfx}}, \glssymbolY{systemparams}{\bfmu}$, and $u_{\In(\bs{h})}$.
\end{remark}

\begin{lemma}
\label{lem:intersection}  For every tuple $\bs{h} \in\mathbb{Z}%
_{\geqslant0}^{m}$, $\glssymbolY{J}{\J} \cap \R_{\bs{h}} = \J_{\bs{h}}$.  In
particular, $\J_{\bs{h}}$ is prime.
\end{lemma}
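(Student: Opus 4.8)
The plan is to prove $\J_{\bs{h}} = \J \cap \R_{\bs{h}}$ by establishing the two inclusions, imitating the proof of Lemma~\ref{lem:gens_and_prime}, and then to deduce primality by a contraction argument. The inclusion $\J_{\bs{h}} \subseteq \J \cap \R_{\bs{h}}$ is the easy one: if $r \in \J_{\bs{h}}$, then $Q^N r \in (\cS_{\bs{h}})$ inside $\R_{\bs{h}}$ for some $N$, and since $\cS_{\bs{h}} \subseteq \cS$ and $(\cS) \subseteq \J$ by Lemma~\ref{lem:gens_and_prime}, we get $Q^N r \in \J$; as $\J = (\cS)\colon Q^\infty$ is saturated with respect to $Q$, this gives $r \in \J$, while $r \in \R_{\bs{h}}$ by construction.

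For the reverse inclusion, take $P \in \J \cap \R_{\bs{h}}$. The crucial structural fact is Remark~\ref{rem:triangular}: $\cS_{\bs{h}}$ is a triangular set in $\R_{\bs{h}}$ whose free variables are exactly $\bfmu$, $x_1, \ldots, x_n$, and $u_{\In(\bs{h})}$, and whose elements are each linear in their leader with initial $Q$. Consequently $\R_{\bs{h}}$ is generated by these free variables together with the leaders of $\cS_{\bs{h}}$, so, exactly as in the proof of Lemma~\ref{lem:gens_and_prime}, pseudo-reduction of $P$ modulo the triangular set $\cS_{\bs{h}}$ can be performed entirely within $\R_{\bs{h}}$ (see~\cite[Section~4.2]{Hubert1}): there exist $N \geqslant 0$ and $P_0 \in \CC[\bfmu, \bs{x}, u_{\In(\bs{h})}]$, reduced with respect to $\cS_{\bs{h}}$ and hence involving no leader, with $Q^N P - P_0 \in (\cS_{\bs{h}}) \subseteq \J$. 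Since $Q^N P \in \J$, we obtain $P_0 \in \J$; but $P_0 \in \CC[\bfmu, \bs{x}]\{u\}$, so $P_0 = 0$ by Lemma~\ref{lem:empty_intersection}. Hence $Q^N P \in (\cS_{\bs{h}})$, i.e.\ $P \in (\cS_{\bs{h}})\colon Q^\infty = \J_{\bs{h}}$.

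Once $\J_{\bs{h}} = \J \cap \R_{\bs{h}}$ is known, primality is immediate: $\J_{\bs{h}}$ is the contraction of the prime ideal $\J$ along the inclusion $\R_{\bs{h}} \hookrightarrow \R$, hence prime (and proper, since $\J$ is proper). The one step that needs a little care is the reduction in the second paragraph: one must check that pseudo-dividing $P$ by the elements of $\cS_{\bs{h}}$ never leaves $\R_{\bs{h}}$ and terminates with a remainder lying in the parametric subring $\CC[\bfmu, \bs{x}, u_{\In(\bs{h})}]$. Both points follow from Remark~\ref{rem:triangular}, because every variable occurring in $\R_{\bs{h}}$ is either one of the free variables or else the leader of a unique element of $\cS_{\bs{h}}$ that is itself an element of $\R_{\bs{h}}$ and thus available as a reductor.
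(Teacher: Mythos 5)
Your proof is correct, but it takes a genuinely different route from the paper's. The paper disposes of this lemma in two lines: it notes that $\J$ is prime by Lemma~\ref{lem:gens_and_prime} and then invokes \cite[Proposition~4.5]{Hubert1}, which is precisely the general statement that the saturation ideal of a triangular set contracts correctly to the subring generated by a sub-triangular set and its variables. You instead unpack that black box: you rerun the pseudo-reduction argument from the proof of Lemma~\ref{lem:gens_and_prime} entirely inside the finite polynomial ring $\R_{\bs{h}}$, use Lemma~\ref{lem:empty_intersection} to kill the remainder, and then obtain primality of $\J_{\bs{h}}$ as the contraction of the prime $\J$ (which is also how the paper's ``in particular'' is meant). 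The trade-off is that your argument is self-contained modulo Remark~\ref{rem:triangular}: the step where you assert that the remainder of pseudo-division lands in $\CC[\bfmu, \bs{x}, u_{\In(\bs{h})}]$ requires that every variable of $\R_{\bs{h}}$ other than $\bfmu$, the order-zero $\bs{x}$, and $u_{\In(\bs{h})}$ is the leader of an element of $\cS_{\bs{h}}$ that is linear in that leader with initial $Q$ --- exactly the content of the Remark, and essentially the hypothesis under which Hubert's Proposition~4.5 operates. So your proof makes explicit the work the paper delegates to the citation; both are valid.
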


\begin{proof}
By Lemma~\ref{lem:gens_and_prime},
        $\J$
is prime. The statement now follows
from~\cite[Proposition~4.5]{Hubert1}.
\end{proof}

We denote the $i$-th standard basis vector in $\mathbb{Z}_{\geqslant0}^{m}$
by $\bs{1}_{i}$ and set 
\begin{equation*}
\bs{1} := \bs{1}_{1} + \ldots + \bs{1}_{m}.
\end{equation*}
For all $\bs{h} \in \mathbb{Z}^m_{\geqslant 0}$, we denote the zero
set of $\J_{\bs{h}}$ by $\glssymbolX{Zh}{Z_{\bs{h}}}$.
We will denote the $d$-dimensional affine $\CC$-space by~$\mathbb{A}^d$.

A morphism $f:X\to Y$ between two algebraic varieties
is said to be \glssymbolX{dominant}{dominant} (or dominating)
if its image $f(X)$ is dense in $Y$,
i.e., $Y=\overline{f(X)}$
(see~\cite[Definition 1, p.~48]{Mumford}).

\begin{theorem}
\label{thm:alg_criterion}  For all $\bs{h} \in \mathbb{Z}%
^m_{\geqslant 0}$ such that 

    \begin{enumerate}[(a)]
\item \label{req:1} the projection of $Z_{ \bs{h} }$ to $(%
\bs{y}_{\bs{h}}, u_{\In(\bs{h})})$-coordinates is
\glssymbolY{dominant}{dominant}, 

\item \label{req:2} the projection of $Z_{\bs{h} + \bs{1}_i}$
to the $(\bs{y}_{\bs{h} + \bs{1}_i}, u_{\In(\bs{h} + \bs{1}_i)})$-coordinates  is not dominant for every $1 \leqslant
i \leqslant m$, 
\end{enumerate}
for every non-empty subset $\bftheta^{\#} \subset 
\glssymbolY{parameters}{\bftheta}$ and every $\bs{h}' \in \mathbb{Z}^m$ such that $\bs{h}' - \bs{h} \in \mathbb{Z}^m_{> 0}$, we have
\begin{equation*}
\begin{pmatrix}
\text{every parameter in }\bftheta^{\#} \\ 
\text{is \emph{globally identifiable}}%
\end{pmatrix}
\iff 
\begin{pmatrix}
\text{the generic fiber of the projection of } Z \text{ to the} \\ 
 (\bs{y}_{\bs{h}'}, u_{\In(\bs{h}^{\prime}}))\text{-coordinates is of \emph{cardinality
one}}%
\end{pmatrix},%
\end{equation*}
where $Z$ is  the Zariski closure of the projection of $Z_{\bs{h}'}$ 
to the subspace with coordinates $(\bftheta^{\#},%
\bs{y}_{\bs{h}'}, u_{\In(\bs{h}^{\prime}}))$. 
\end{theorem}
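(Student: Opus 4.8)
The plan is to deduce Theorem~\ref{thm:alg_criterion} from the field-theoretic criterion in Proposition~\ref{prop:equiv_fields}\ref{item:fields}, namely that a parameter $\theta$ is globally identifiable iff $\cE(\theta)=\cE$, where $\cE$ is the subfield of $\cF$ generated by the image of $\CC\{\bfy,u\}$. The first step is to show that the hypotheses \ref{req:1}--\ref{req:2} on $\bs h$ pin down exactly the part of the output/input that carries all the algebraic information about the parameters; concretely, I would prove that conditions \ref{req:1}--\ref{req:2} are equivalent to saying that $\cE$ is already generated over $\CC$ by the images of $\bs{y}_{\bs h}$ and $u_{\In(\bs h)}$ (with their denominators), i.e. that no higher derivative $y_k^{(h_k)}$ is transcendental over the field generated by the lower ones together with the $\bfmu$ and $\bfxs$. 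The dominance of the projection of $Z_{\bs h}$ onto the $(\bs{y}_{\bs h}, u_{\In(\bs h)})$-coordinates says these coordinates are algebraically independent as elements of $\Quot(\R_{\bs h}/\J_{\bs h})$; the non-dominance for $\bs h+\bs 1_i$ says that adjoining $y_i^{(h_i)}$ introduces an algebraic dependence, hence $y_i^{(h_i)}$ is algebraic over the field generated by everything below it. Iterating this (a ``cascade'' argument, using Lemma~\ref{lem:intersection} to identify $\J\cap\R_{\bs h}$ with $\J_{\bs h}$) shows that every $y_k^{(j)}$ with $j\geqslant h_k$ is algebraic over the subfield $L:=\CC(\text{images of }\bs{y}_{\bs h},u,u',\ldots)$ inside $\cF$, so $\cE$ is algebraic over $L$, and in fact (by a transcendence-degree count) $\cE$ and $L$ have the same algebraic closure inside $\cF$.

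The second step is to pass from $\bs h$ to any $\bs h'$ with $\bs h'-\bs h\in\mathbb Z^m_{>0}$. Since $\cE$ is algebraic over $L$, and $L\subset\CC(\text{images of }\bs{y}_{\bs h'},u_{\In(\bs h')})=:L'$, we have $\cE\subset\overline{L'}$ and $L'\subset\overline{\cE}$ (the latter because the extra derivatives $y_k^{(j)}$, $h_k\leqslant j<h_k'$, are algebraic over $\cE$), so $\cE$ and $L'$ have the same algebraic closure. Now I would translate the generic-fiber statement: let $Z$ be the Zariski closure of the projection of $Z_{\bs h'}$ to $(\bftheta^{\#},\bs{y}_{\bs h'},u_{\In(\bs h')})$-space, and let $\pi$ be the further projection forgetting $\bftheta^{\#}$. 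By Lemma~\ref{lem:intersection} and primality of $\J$, $Z$ is irreducible and its function field is $\CC(\bftheta^{\#},\bs{y}_{\bs h'},u_{\In(\bs h')})$ viewed as a subfield of $\cF$; the image $\overline{\pi(Z)}$ has function field $L'$. The cardinality of the generic fiber of $\pi$ equals the separable degree (here just the degree, in characteristic $0$) of the field extension $\CC(\bftheta^{\#},\bs{y}_{\bs h'},u_{\In(\bs h')})$ over $L'$, by the standard correspondence between dominant morphisms of irreducible varieties and finite field extensions of their function fields (see, e.g., \cite{Mumford}).

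The third step assembles these: the generic fiber has cardinality one iff $\CC(\bftheta^{\#},\bs{y}_{\bs h'},u_{\In(\bs h')})=L'$ inside $\cF$, i.e. iff every $\theta\in\bftheta^{\#}$ lies in $L'$. Since $L'$ and $\cE$ have the same algebraic closure and $\theta$ is locally identifiable (so $\theta$ is already algebraic over $\cE$, hence over $L'$), the condition ``$\theta\in L'$'' is equivalent to ``$\theta$ does not properly extend $L'$'', and because $L'$ and $\cE$ share the same algebraic closure and $\theta$ is algebraic over both, $\theta\in L'\iff\theta\in\overline{L'}\cap\cF=\overline{\cE}\cap\cF$-degree-$1$-over-$\cE$, i.e. $\cE(\theta)=\cE$; this is global identifiability by Proposition~\ref{prop:equiv_fields}. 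Quantifying over all $\theta\in\bftheta^{\#}$ gives the stated equivalence. The main obstacle I anticipate is the first step: carefully showing that the geometric conditions \ref{req:1}--\ref{req:2}, which only mention $\bs h$ and $\bs h+\bs 1_i$, genuinely force \emph{all} higher output derivatives to be algebraic over $L$ — this requires a clean induction on the derivative order that exploits the triangular structure of $\cS_{\bs h}$ (Remark~\ref{rem:triangular}) to show that once $y_i^{(h_i)}$ becomes algebraic, differentiating the algebraic relation keeps every subsequent $y_i^{(j)}$ algebraic, and then combining this across the $m$ output components. A secondary technical point is making precise the identification of generic-fiber cardinality with field-extension degree in a setting where the ambient fields are infinitely generated, which is why restricting to the finite-dimensional $Z_{\bs h'}$ via Lemma~\ref{lem:intersection} is essential.
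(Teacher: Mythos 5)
Your overall strategy matches the paper's: reduce to the field criterion of Proposition~\ref{prop:equiv_fields}\ref{item:fields}, use the triangular structure of $\cS_{\bs h}$ to control what the conditions \ref{req:1}--\ref{req:2} say about $\J_{\bs h}$ and $\J_{\bs h+\bs 1_i}$, and translate the generic-fiber cardinality into a statement about function fields of the finite-dimensional varieties. However, there is a genuine gap at the junction of your steps 1 and 3. In step 1 you only establish that $\cE$ is \emph{algebraic} over $L$ (equivalently, that $\cE$ and $L'$ have the same algebraic closure in $\cF$). That is strictly weaker than what the theorem needs. The right-hand side of the equivalence is ``generic fiber of cardinality one,'' which under your own function-field translation means $\theta \in L'$, i.e.\ $\theta$ is of degree \emph{one} over the truncated data; the left-hand side means $\theta \in \cE$. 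If $\cE$ were a proper algebraic extension of $L'$ of degree $d>1$ and $\theta \in \cE\setminus L'$, then $\theta$ would be globally identifiable while the generic fiber has cardinality $>1$; ``same algebraic closure'' cannot exclude this (compare $\mathbb{Q}\subset\mathbb{Q}(\sqrt 2)$). Your step 3 tries to bridge this with the sentence ``$\theta\in L'\iff\theta\in\overline{L'}\cap\cF=\overline{\cE}\cap\cF$-degree-$1$-over-$\cE$,'' which is a non sequitur. What is actually needed — and what the paper proves as Lemma~\ref{lem:max_transc} — is the \emph{equality} of $\cE$ with the subfield generated by the image of $\CC[\bs y_{\bs h+\bs 1}]\{u\}$. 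The mechanism is precisely the point you mention in passing but do not exploit: differentiating the minimal polynomial $P_i$ of $y_i^{(h_i)}$ yields $P_i' = S_i\, y_i^{(h_i+1)} + T_i$ with $S_i\notin\J$, so $y_i^{(h_i+1)}$ and all subsequent derivatives are expressed \emph{rationally} (degree one), not merely algebraically, over the data up to level $\bs h+\bs 1$. This is also exactly where the hypothesis $\bs h'-\bs h\in\mathbb Z^m_{>0}$ does real work: your argument never uses the strictness and would equally ``prove'' the statement for $\bs h'=\bs h$, where it is false in general (at level $\bs h$ the output derivative $y_i^{(h_i)}$ may satisfy an irreducible relation of degree $d>1$, inflating the fiber of a globally identifiable parameter).

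Two secondary issues. First, your $L'$ contains only the finitely many input derivatives $u_{\In(\bs h')}$, while $\cE$ contains all of them; the higher $u^{(j)}$ are transcendental over $L'$ (Lemma~\ref{lem:empty_intersection}), so your claim $\cE\subset\overline{L'}$ is false as written. The paper resolves this by reducing the witnessing relation $P_\theta$ modulo the characteristic set $\cS_{\bs h'}$ and extracting its coefficients with respect to the derivatives of $u$ outside $u_{\In(\bs h')}$, each of which lies in $\J_{\bs h'}\cap\CC[\bs y_{\bs h'}, u_{\In(\bs h')}]$; you need an analogous step. Second, you invoke local identifiability of $\theta$ in step 3, which the theorem does not assume; this is harmless (if $\theta$ is not locally identifiable both sides of the equivalence fail), but should be said explicitly.
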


\begin{remark}
One can check efficiently whether $\bs{h}$ satisfies requirements~%
\ref{req:1} and~\ref{req:2}
    from Theorem~\ref{thm:alg_criterion}  using
Proposition~\ref{prop:dominant} (see next).  Corollary~\ref{cor:existence} (see next) implies that
such an $\bs{h}$ always exists. The intuition behind such an  $\bs{h}$ is that we are looking for a prolongation that would determine all Taylor coefficients of  
$\bs{y}$ from the first $\bs{h}+\bs{1}$ and $\glssymbolY{In}{\In}(\bs{h}+\bs{1})$ Taylor coefficients of $\bs{y}$  and $\bs{u}$, respectively.
\end{remark}

\begin{lemma}
\label{lem:max_transc}  Let $\bs{h}$ be the tuple from the statement
of Theorem~\ref{thm:alg_criterion}.  Then 
    \glssymbolY{cE}{\ensuremath{\cE}} 
 is generated in $\cF$ (see Notation~\ref{not:SFE}) by the image of $\CC[\bs{y}_{%
\bs{h} + \bs{1}}]\{\glssymbolY{input}{\ensuremath{u}}\}$.
\end{lemma}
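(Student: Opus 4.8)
The plan is to show the two inclusions of subfields of $\cF$. Write $\cE_0$ for the subfield of $\cF$ generated by the image of $\CC[\bs{y}_{\bs{h}+\bs{1}}]\{u\}$. Since $\bs{y}_{\bs{h}+\bs{1}}$ and the derivatives of $u$ are among the generators of $\cE$ (recall $\cE$ is generated by the image of $\CC\{\bs{y},u\}$), the inclusion $\cE_0 \subset \cE$ is immediate. The content is the reverse inclusion: every $y_k^{(j)}$ with $j \geqslant h_k + 1$ must already lie in $\cE_0$. Equivalently, I want to show $\cF$ is algebraic over $\cE_0(\bftheta)$ and then use the dominance hypothesis~\ref{req:1} to pin down that $\cE = \cE_0$, not just an algebraic extension.

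First I would translate requirement~\ref{req:1} into a transcendence-degree statement. Dominance of the projection of $Z_{\bs{h}}$ onto the $(\bs{y}_{\bs{h}}, u_{\In(\bs{h})})$-coordinates means, via Lemma~\ref{lem:intersection} (which identifies $\J_{\bs{h}}$ with $\J \cap \R_{\bs{h}}$ and shows it is prime), that the images of $\bs{y}_{\bs{h}}$ and $u_{\In(\bs{h})}$ in $\cT$ are algebraically independent over $\CC$; hence $\operatorname{trdeg}$ of $\Quot(\R_{\bs{h}}/\J_{\bs{h}})$ over the subfield generated by these coordinates equals $\dim Z_{\bs{h}} - (h_1 + \cdots + h_m + \In(\bs{h}))$, and by the triangularity of $\cS_{\bs{h}}$ (Remark~\ref{rem:triangular}) the free variables are exactly $\bfmu$, $\bs{x}$, and $u_{\In(\bs{h})}$, so this transcendence degree is $s$. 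The key consequence is that $\cF$ has transcendence degree $s$ over $\cF_u$, and over the subfield $\cE_0$ generated by the $\bs{y}_{\bs{h}+\bs{1}}$ together with all of $\cF_u$; indeed passing from $\bs{h}$ to $\bs{h}+\bs{1}$ adds one more output derivative in each component, which by requirement~\ref{req:2} applied coordinatewise becomes algebraic over the previous ones together with $u$.

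The heart of the argument is then an induction showing every $y_k^{(j)}$ for $j \geqslant h_k + 1$ is algebraic over the field generated by $\bs{y}_{\bs{h}+\bs{1}}$ and the derivatives of $u$. The base of the induction is requirement~\ref{req:2}: for each $i$, the projection of $Z_{\bs{h}+\bs{1}_i}$ to the $(\bs{y}_{\bs{h}+\bs{1}_i}, u_{\In(\bs{h}+\bs{1}_i)})$-coordinates is not dominant, which by primality of $\J_{\bs{h}+\bs{1}_i}$ forces a nontrivial algebraic relation among these coordinates over $\CC$; combined with the independence from requirement~\ref{req:1}, this relation must genuinely involve $y_i^{(h_i+1)}$ (or a newly-appearing derivative of $u$, which is itself a derivative-consequence of lower-order ones and so causes no problem). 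For the inductive step, differentiate such a relation: since $\cE$ and $\cE_0$ are differential subfields and differentiation of an algebraic-dependence relation $P(\ldots) = 0$ yields $\sum \frac{\partial P}{\partial z}\cdot z' = 0$ with $\frac{\partial P}{\partial z_{\text{top}}} \neq 0$ by minimality, each higher derivative $y_k^{(j+1)}$ is expressed rationally in terms of already-controlled quantities together with derivatives of $u$. Iterating, the differential subfield generated by $\bs{y}_{\bs{h}+\bs{1}}$ and $u, u', \ldots$ contains all $y_k^{(j)}$, hence contains the image of $\CC\{\bs{y},u\}$, hence contains $\cE$; since it is contained in $\cE$ by construction, equality follows.

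The main obstacle I anticipate is the bookkeeping in the inductive step: one must verify that the derivatives of $u$ that appear after differentiating the minimal relations really are the derivatives of $u$ already present (i.e.\ that $\In(\bs{h}+\bs{1})$ is large enough), and that differentiation does not destroy the non-vanishing of the relevant partial derivative—this is where the triangular-set structure of $\cS_{\bs{h}}$ (Remark~\ref{rem:triangular}, \cite[Section~4.2]{Hubert1}) and the primality from Lemma~\ref{lem:intersection} do the real work, by guaranteeing that leading coefficients of the triangular polynomials stay outside the prime ideal. A clean way to package this is to argue purely in terms of transcendence degrees: show $\operatorname{trdeg}_{\cE_0}\cE = 0$ by exhibiting $s$ algebraically independent elements (the free variables $\bfmu, \bs{x}$) over both $\cE$ and $\cE_0$ and noting $\operatorname{trdeg}_{\cE_0}\cF = \operatorname{trdeg}_{\cE}\cF = s$ forces $\cE_0$ and $\cE$ to have the same algebraic closure in $\cF$; combined with the fact that $\cE$ is generated over $\cE_0$ by elements each shown algebraic over $\cE_0$, this gives $\cE = \cE_0$.
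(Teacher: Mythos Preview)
Your overall strategy---obtain from requirement~\ref{req:2} a nontrivial polynomial relation in $\J$ among the output derivatives and $u$-derivatives, then differentiate it repeatedly to express each higher $y_i^{(j)}$ rationally over $\cE_0$---is precisely the paper's approach. Two points need correction, one cosmetic and one substantive.

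First, an index shift: by Notation~\ref{not:subs_der}, $\bs{y}_{\bs{h}+\bs{1}_i}$ contains $y_i^{(j)}$ for $0\leqslant j\leqslant h_i$, so the ``new'' coordinate is $y_i^{(h_i)}$, not $y_i^{(h_i+1)}$. The relation $P_i$ furnished by~\ref{req:2} therefore involves $y_i^{(h_i)}$; it is only after one differentiation that $y_i^{(h_i+1)}$ appears and is solved for. Your induction is set up one step off.

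Second, and more seriously, your parenthetical ``or a newly-appearing derivative of $u$, which is itself a derivative-consequence of lower-order ones'' is wrong: the derivatives of the input $u$ are algebraically independent in $\cF$ (Lemma~\ref{lem:empty_intersection}), so no $u^{(k)}$ is a consequence of lower-order ones. The danger is real: passing from $\bs{h}$ to $\bs{h}+\bs{1}_i$ may enlarge $\In(\bs{h})$, so a priori the relation $P_i$ could involve only $\bs{y}_{\bs{h}}$ and new $u$-derivatives, without $y_i^{(h_i)}$. The paper disposes of this by viewing $P_i$ as a polynomial in the new $u$-derivatives $u_{\In(\bs{h}+\bs{1}_i)}\setminus u_{\In(\bs{h})}$: if $P_i$ did not involve $y_i^{(h_i)}$, its reduction against the characteristic set $\cS_{\bs{h}+\bs{1}_i}$ would use only elements of $\cS_{\bs{h}}$, forcing each coefficient into $\J_{\bs{h}}\cap\CC[\bs{y}_{\bs{h}},u_{\In(\bs{h})}]$, which is $\{0\}$ by requirement~\ref{req:1}. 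You need this (or an equivalent) argument; your current justification does not supply it.

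Finally, the transcendence-degree framing you sketch at the end would, on its own, only give that $\cE$ is algebraic over $\cE_0$, not equal to it. The equality really comes from the differentiation step producing \emph{rational} expressions for the higher $y$-derivatives, which you do state correctly in the middle paragraph; the alternative packaging does not add anything and can be dropped.
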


\begin{proof}
Consider $i$, $1 \leqslant i \leqslant m$. Since the projection of $Z_{%
\bs{h} + \bs{1}_i}$ to the $(\bs{y}_{\bs{h} + \bs{1}_i}, u_{\In(\bs{h} + \bs{1}_i)})$-coordinates is
not \glssymbolY{dominant}{dominant} and the projection of $Z_{\bs{h}}$ to the $(\bs{%
y}_{\bs{h}}, u_{\In(\bs{h})})$-coordinates is dominant, 
        \begin{equation*}
        \J_{\bs{h} + \bs{1}_i} \cap \CC[\bs{y}_{%
        \bs{h} + \bs{1}_i}, u_{\In(\bs{h} + \bs{1}_i)}]
        \neq \{0\} \text{ and } \J_{\bs{h}} \cap \CC[\bs{y}_{%
        \bs{h}}, u_{\In(\bs{h})}] = \{0\}. 
        \end{equation*}
Consider a nonzero $P \in \J_{\bs{h} + \bs{1}_i} \cap 
\CC[\bs{y}_{\bs{h} + \bs{1}_i}, u_{\In(\bs{h} + \bs{1}_i)}]$. Since $\cS_{\bs{h} + 
\bs{1}_i}$ is a triangular set, $\J_{\bs{h} + \bs{1}%
_i}=(\cS_{\bs{h} + \bs{1}_i}):Q^\infty$, and the ideal $\J_{%
\bs{h} + \bs{1}_i}$ is prime, $\cS_{\bs{h} + 
\bs{1}_i}$ is a characteristic set of $\J_{\bs{h} + 
\bs{1}_i}$ (see \cite[Definitions~5.5 and~5.10, Theorem~5.13]%
{Hubert1}). Hence, $P$ can be reduced to zero with respect to $\cS_{%
\bs{h} + \bs{1}_i}$. If $P$ does not involve $y^{(h_i)}$ (which is an element of $\bs{y}_{\bs{h} + \bs{1}_i}$, see Notation~\ref{not:Sh}),
then the reduction uses only $\cS_{\bs{h}}$, hence all coefficients
of $P$ as a polynomial in $u_{\In(\bs{h} + \bs{1}_i)} \setminus
u_{\In(\bs{h})}$ are elements of $\J_{\bs{h}} \cap \CC[%
\bs{y}_{\bs{h}}, u_{\In(\bs{h})}]$. Since the latter is
zero, every nonzero element in $\J_{\bs{h} + \bs{1}_i} \cap 
\CC[\bs{y}_{\bs{h} + \bs{1}_i}, u_{\In(\bs{h} + \bs{1}_i)}]$ depends on $y_i^{(h_i)}$. Consider such
an element $P_i$ of minimal possible degree in $y_i^{(h_i)}$. 

We will prove
by induction on $k$ that the subfield of $\cF$ generated by the image of $%
\CC[\bs{y}_{\bs{h} + k\bs{1}}]\{u\}$
coincides with the subfield of $\cF$ generated by the image of $\CC[%
\bs{y}_{\bs{h} + \bs{1}}]\{u\}$ for every $k
\geqslant 2$. Let $k = 2$. Consider $i$, $1 \leqslant i \leqslant m$, and $%
P_i^{\prime }= S_i y_i^{(h_i + 1)} + T_i$, where $S_i, T_i \in \CC[%
\bs{y}_{\bs{h} + \bs{1}}]\{u\}$ and $S_i \notin \J_{%
\bs{h} + \bs{1}}$. Hence, the image of $y_i^{(h_i + 1)}$ in $%
\cF$ belongs to the subfield generated by $\CC[\bs{y}_{%
\bs{h} + \bs{1}}]\{u\}$, so the base case is proved. Let $k
> 2$. Consider $i$, $1 \leqslant i \leqslant m$. The inductive hypothesis
implies that there are $A, B \in \CC[\bs{y}_{\bs{h} +
(k - 1)\bs{1}}]\{u\}$ such that $Ay_i^{(h_i + k - 1)} + B \in \J$,
but $A \notin \J$. Taking the derivative of $Ay_i^{(h_i + k-1)} + B$, we
obtain 
        \begin{equation*}
        Ay_i^{(h_i + k)} + A^{\prime }y_i^{(h_i + k - 1)} + B^{\prime }\in \J.
        \end{equation*}
This implies that the image of $y_i^{(h_i + k)}$ in $\cF$ belongs to the
subfield generated by the image of $\CC[\bs{y}_{\bs{h}
+ k\bs{1}}]\{u\}$. By the inductive hypothesis, the latter coincides
with the subfield generated by the image of $\CC[\bs{y}_{%
\bs{h} + \bs{1}}]\{u\}$.
\end{proof}

\begin{proof}[Proof of~Theorem~\ref{thm:alg_criterion}]  If the generic fiber of
the projection of $Z$ to the $(\bs{y}_{\bs{h}'}, u_{\In(\bs{h}^{\prime})})$-coordinates is of cardinality one,
then every $\theta \in \bftheta^{\#}$ is algebraic of degree one
over  $\CC[\bs{y}_{\bs{h}'}, u_{\In(\bs{h}^{\prime})}]$ 
modulo $\J_{\bs{h}'}$ (see \cite[page~562]{Schost}).  Then the images of
    $\bftheta%
^{\#}$ in $\cF$ belong to the subfield generated by the image if $\CC%
\{u\}[y_{\bs{h}'}]$  in $\cF$, so they belong to
   $\cE$.
Proposition~\ref{prop:equiv_fields} implies that $\bftheta^{\#}$
are globally identifiable.

Let $\bftheta^{\#}$ be globally identifiable. Proposition~\ref%
{prop:equiv_fields} implies that the image of  every $\theta \in \bs{%
\theta}^{\#}$ in $\cF$ belongs to $\cE$.  Consider $\theta \in \bs{%
\theta}^\#$.  Lemma~\ref{lem:max_transc} implies that
   $\cE$
is generated by
the image of $\CC\{ u\}[\bs{y}_{\bs{h} + \bs{1%
}}]$.  Hence, $\theta$ is algebraic of degree one over $\CC\{ u\}[y_{%
\bs{h}'}]$ modulo  $\J_{\bs{h}'} \cap \CC\{ u\}[\bs{y}_{\bs{h}'}]$%
. Let $P_{\theta}$ be such a relation.  Since $\cS_{ 
\bs{h}'}$ is a triangular set, $\J_{\bs{h}'}=(%
\cS_{\bs{h}'}):Q^\infty$, and the ideal $\J_{%
\bs{h}'}$ is prime, $\cS_{\bs{h}'}$ is a characteristic set of $\J_{\bs{h}'}$ (see \cite[Definitions~5.5 and~5.10, Theorem~5.13]{Hubert1}%
).

Therefore, $P_\theta$ can be reduced to zero with respect to $\cS_{%
\bs{h}'}$.  Consider $P_\theta$ as a polynomial in
all derivatives of %
\glssymbolY{input}{$u$}
that do not belong to $u_{\In(\bs{h}^{\prime})}$,  
i.e., do not occur in $\cS_{\bs{h}'%
}$.  Then each of these coefficients can also be reduced to zero with
respect to $\cS_{\bs{h}'}$,  and so each of these
coefficients belongs to $\J_{\bs{h}'}$, therefore, 
to $\J_{\bs{h}'}\cap \CC[\bs{y}_{%
\bs{h}'}, u_{\In(\bs{h}^{\prime})}]$. 
Taking one of them involving $\theta$, we obtain a polynomial equation of
degree one  for $\theta$ over $\CC[y_{\bs{h}'%
}, u_{\In(\bs{h}^{\prime})}]$ modulo  $\J_{ 
\bs{h}'} \cap \CC[\bs{y}_{\bs{h}'}, u_{\In(\bs{h}^{\prime})}]$.  Since this holds for every $%
\theta \in \bftheta^\#$,  the cardinality of the generic fiber of
the projection of $Z$ onto $(\bs{y}_{\bs{h}'%
}, u_{\In(\bs{h}^{\prime})})$-coordinates is one.
\end{proof}

The following statement can be proved in the same way as Theorem~\ref%
{thm:alg_criterion} using the part of the statement of Proposition~\ref%
{prop:equiv_fields} about local identifiability.

\begin{proposition}
\label{prop:alg_criterion_local}  We will use the notation from Theorem~\ref%
{thm:alg_criterion}. Parameters $\bftheta^\# \subset \glssymbolY{parameters}{\bftheta}$  are locally identifiable if and only if  the generic fiber of the
projection of $Z$ to the $\big( \bs{y}_{\bs{h}'}, u_{\glssymbolY{In}{\In}(\bs{h}')}\big)$-coordinates is finite.
\end{proposition}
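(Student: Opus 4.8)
The plan is to imitate the proof of Theorem~\ref{thm:alg_criterion} almost verbatim, replacing every appeal to the global-identifiability part of Proposition~\ref{prop:equiv_fields} by the local-identifiability part (items~\ref{item:new_def}--\ref{item:fields} with ``resp.'' readings), and replacing ``algebraic of degree one / cardinality one'' throughout by ``algebraic / finite cardinality''. Concretely, I would carry out the two implications separately, keeping the same choice of $\bs{h}$ and $\bs{h}'$.

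\textbf{($\Leftarrow$).} Suppose the generic fiber of the projection of $Z$ to the $(\bs{y}_{\bs{h}'}, u_{\In(\bs{h}')})$-coordinates is finite. Then, by~\cite[page~562]{Schost}, every $\theta \in \bftheta^\#$ is algebraic (no longer of degree one, just algebraic) over $\CC[\bs{y}_{\bs{h}'}, u_{\In(\bs{h}')}]$ modulo $\J_{\bs{h}'}$. Exactly as in the proof of Theorem~\ref{thm:alg_criterion}, this means the image of each $\theta$ in $\cF$ lies in the subfield generated by the image of $\CC\{u\}[\bs{y}_{\bs{h}'}]$, hence in $\cE$ by Lemma~\ref{lem:intersection}; therefore the extension $\cE \subset \cE(\theta)$ is algebraic, and Proposition~\ref{prop:equiv_fields}\ref{item:fields} (local version) gives that every $\theta \in \bftheta^\#$ is locally identifiable.

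\textbf{($\Rightarrow$).} Suppose every $\theta \in \bftheta^\#$ is locally identifiable. Proposition~\ref{prop:equiv_fields} then says the image of each such $\theta$ in $\cF$ is algebraic over $\cE$. Fix $\theta \in \bftheta^\#$. By Lemma~\ref{lem:max_transc}, $\cE$ is generated by the image of $\CC\{u\}[\bs{y}_{\bs{h}+\bs{1}}]$, so $\theta$ is algebraic over $\CC\{u\}[\bs{y}_{\bs{h}'}]$ modulo $\J_{\bs{h}'} \cap \CC\{u\}[\bs{y}_{\bs{h}'}]$; call such a relation $P_\theta$. As in Theorem~\ref{thm:alg_criterion}, since $\cS_{\bs{h}'}$ is a characteristic set of the prime ideal $\J_{\bs{h}'} = (\cS_{\bs{h}'}):Q^\infty$, we may reduce $P_\theta$ modulo $\cS_{\bs{h}'}$ to zero; viewing $P_\theta$ as a polynomial in the derivatives of $u$ not occurring in $\cS_{\bs{h}'}$, each coefficient reduces to zero as well, hence lies in $\J_{\bs{h}'} \cap \CC[\bs{y}_{\bs{h}'}, u_{\In(\bs{h}')}]$; a coefficient that still involves $\theta$ gives an algebraic equation for $\theta$ over $\CC[\bs{y}_{\bs{h}'}, u_{\In(\bs{h}')}]$ modulo $\J_{\bs{h}'} \cap \CC[\bs{y}_{\bs{h}'}, u_{\In(\bs{h}')}]$. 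Since this holds for every $\theta \in \bftheta^\#$ and $Z$ is the Zariski closure of the image of $Z_{\bs{h}'}$ in the $(\bftheta^\#, \bs{y}_{\bs{h}'}, u_{\In(\bs{h}')})$-coordinates, the generic fiber of the projection of $Z$ to the $(\bs{y}_{\bs{h}'}, u_{\In(\bs{h}')})$-coordinates is finite.

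The only point that needs a word of care — and the main thing to double-check rather than an obstacle per se — is that in the degree-one case one extracts a \emph{linear} relation for $\theta$ and concludes cardinality exactly one, whereas here one only gets \emph{some} algebraic relation and hence finiteness; one must make sure the reduction step and the passage to coefficients in $u$ genuinely preserve the property ``the relation still involves $\theta$'' without secretly using linearity, which it does, since the argument only uses that $\cS_{\bs{h}'}$ is a characteristic set and that $\theta \notin \J_{\bs{h}'}$-algebraic-closure issues do not arise. Given that, the proof is a routine transcription of the Theorem~\ref{thm:alg_criterion} argument and no new ingredient is required beyond the local half of Proposition~\ref{prop:equiv_fields}, Lemma~\ref{lem:max_transc}, and Lemma~\ref{lem:intersection}.
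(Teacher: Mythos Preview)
Your proposal is correct and is precisely what the paper does: the paper gives no separate proof but simply remarks that Proposition~\ref{prop:alg_criterion_local} ``can be proved in the same way as Theorem~\ref{thm:alg_criterion} using the part of the statement of Proposition~\ref{prop:equiv_fields} about local identifiability.'' One small slip to fix in your ($\Leftarrow$) direction: from $\theta$ being merely algebraic (not degree one) over $\CC[\bs{y}_{\bs{h}'}, u_{\In(\bs{h}')}]$ modulo $\J_{\bs{h}'}$ you cannot conclude that its image \emph{lies in} the subfield generated by $\CC\{u\}[\bs{y}_{\bs{h}'}]$---only that it is algebraic over that subfield, hence over $\cE$; your final conclusion that $\cE\subset\cE(\theta)$ is algebraic is the right one, so just adjust the intermediate wording.
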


\begin{proposition}
\label{prop:dominant}  For all $\bs{h} \in \mathbb{Z}^m_{\geqslant 0}
$, the projection of  $\glssymbolY{Zh}{Z_{\bs{h}}}$ to the $(\bs{y}_{%
\bs{h}}, u_{\In(\bs{h})})$-coordinates is \glssymbolY{dominant}{dominant} if and only if
the rank of the Jacobian of $\cS_{\bs{h}}$ with respect to $(%
\glssymbolY{systemparams}{\bfmu},\bs{x}_{\glssymbolY{State}{\State}(\bs{h})})$  is equal to $|\cS_{%
\bs{h}}|$  on a dense open subset of $Z_{\bs{h}}$.
\end{proposition}

\begin{proof}
We set 
\begin{equation*}
N := |\bs{y}_{\bs{h}}| + |\bs{x}_{\State(\bs{h})}| +
|\bfmu| + |u_{\In(\bs{h})}|\quad \text{and}\quad N_0 := |%
\bs{y}_{\bs{h}}| + |u_{\In(\bs{h})}|.
\end{equation*}
Then ${Z}_{\bs{h}} \subset \mathbb{A}^N$, and we denote the
projection to the $(\bs{y}_{\bs{h}}, u_{\In(\bs{h})})$%
-coordinates  by $\pi\colon \mathbb{A}^N \to \mathbb{A}^{N_0}$.  Let $M$ be
the Jacobian matrix of $\cS_{\bs{h}}$ with respect to $(\bs{x%
}_{\State(\bs{h})}, \bfmu)$.  Since $\cS_{\bs{h}}$ is a
triangular set and $\J_{\bs{h}}=(\cS_{\bs{h}}):Q^\infty$,  $%
\func{codim}Z_{\bs{h}}=|\cS_{\bs{h}}|$ by~\cite[Theorem~4.4]%
{Hubert1}.

Assume that $\pi(Z_{\bs{h}})$ is not dense in $\mathbb{A}^{N_0}$. 
Then, by~\cite[Theorem~1.25]{Shafarevich}, for every $x \in Z_{\bs{h}%
}$,  
\begin{equation}  \label{eq:ZhN0}
\dim \pi^{-1}(\pi(x)) \cap Z_{\bs{h}} > \dim Z_{\bs{h}} -
N_0.
\end{equation}
Let $p := \big(\hat{\bs{y}}_{\bs{h}}, \hat{u}_{\In(\bs{h})}\big)\in\pi(Z_{\bs{h}})$.  Then 
\begin{equation*}
\widehat{M} := M\big[ \bs{y}_{\bs{h}} \leftarrow \hat{\bs{y}}_{\bs{h}}, u_{\In(\bs{h})} \leftarrow \hat{u}_{\In(\bs{h})}\big]
\end{equation*}
can be viewed as the Jacobian of the polynomials $\cS_{\bs{h}}\big[ 
\bs{y}_{\bs{h}} \leftarrow \hat{\bs{y}}_{\bs{h}%
}, u_{\glssymbolY{In}{\In}(\bs{h})} \leftarrow \hat{u}_{\In(\bs{h})}\big]$  in $(%
\bs{x}_{\State(\bs{h})}, \bfmu)$,  
which all vanish on $\pi^{-1}(p) \cap Z_{\bs{h}}$.  Then the rank of 
$\widehat{M}$ at every point of $\pi^{-1}(p) \cap Z_{\bs{h}}$ does not
exceed the codimension of  $\pi^{-1}(p) \cap Z_{\bs{h}}$ in $%
\pi^{-1}(p)$~\cite[Theorem~16.19]{Eisenbud}, which, by~\eqref{eq:ZhN0}, is
less than 
\begin{equation*}
(N - N_0) - (\dim Z_{\bs{h}} - N_0) = \func{codim} Z_{\bs{h}%
}.
\end{equation*}

Assume that $\pi(Z_{\bs{h}})$ is dense in $\mathbb{A}^{N_0}$.  Let $%
A := \CC[\bs{y}_{\bs{h}}, u_{\In(\bs{u})}]$ and $B
:= \func{Quot}(A)$.  Hence, $\J_{\bs{h}} \cap A = 0$.  We introduce
a new variable $z$ and set $\widetilde{\R} := \R_{\bs{h}} [z]$  and $%
\widetilde{\J} := (\cS_{\bs{h}}, Q z - 1)$. Then $\widetilde{\J}
\cap \R_{\bs{h}} = \J_{\bs{h}}$ by \cite[Theorem~14, page~205%
]{CLO}.  The ideal $\widetilde{\J}$ is prime because $\widetilde{\R} / 
\widetilde{\J}$ can be obtained from the domain  $\R_{\bs{h}} / \J_{%
\bs{h}}$ by inverting the image of
    \glssymbolY{Q}{$Q$}.
Since $\widetilde{\J} \cap
A = \{0\}$, $B \cdot \widetilde{\J}$ is a prime ideal in the $B$-algebra $B
\otimes_A \widetilde{\R}$. Note that 
\begin{equation*}
\big(B \otimes_A \widetilde{\R}\big/B \cdot \widetilde{\J}\big)_{(0)}
\end{equation*}
is a regular local ring. Let $c$ denote the codimension of $B \cdot 
\widetilde{\J}$ in $B \otimes_A \widetilde{\R}$. \cite[Corollary~16.20]%
{Eisenbud} implies the ideal of $B\otimes_A \widetilde{\R}\big/B \cdot 
\widetilde{\J}$ generated by the $c\times c$ minors of $\widetilde M$, the
Jacobian of $\{ \cS_{\bs{h}}, Qz - 1\}$ with respect to  $(z, 
\bfmu, \bs{x}_{\State(\bs{h})})$, strictly contains $(0)$.  Therefore, $\func{rank}\widetilde M \geqslant c$. By \cite[Theorem~16.19]%
{Eisenbud}, $\func{rank}\widetilde M \leqslant c$.  Since $\widetilde M$ is
of the form  
\begin{equation*}
\begin{pmatrix}
Q & \ast \\ 
0 & M%
\end{pmatrix}%
,  
\end{equation*}
its rank is equal to the rank of $M$ modulo $\J_{\bs{h}}$ plus one. 
Since $\widetilde{\J}\cap A =\{0\}$, the codimension of $B\widetilde{\J}$ in 
$B \otimes_A \widetilde{\R}$ is equal to the codimension  of $\widetilde{\J}$
in $\widetilde{\R}$, and the latter is equal to one plus the codimension of $%
\J_{\bs{h}}$ in $\R_{\bs{h}}$.  Thus, the rank of $M$ modulo 
$\J_{\bs{h}}$ is equal to $|\cS_{\bs{h}}|$.
\end{proof}

\begin{corollary}
\label{cor:existence}  There exists $\bs{h} \in \mathbb{Z}%
_{\geqslant 0}^m$ satisfying requirements~\ref{req:1} and~\ref{req:2}
from Theorem~\ref{thm:alg_criterion}.  Moreover, for every $\bs{h}%
=(h_1,\ldots,h_m)$ satisfying requirement~\ref{req:1}, $h_1+\ldots+h_m
\leqslant s=|\glssymbolY{parameters}{\bftheta}|$.
\end{corollary}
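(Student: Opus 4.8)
The plan is to reformulate requirement~\ref{req:1} as an algebraic independence statement, deduce from it the uniform bound $h_1+\dots+h_m\leqslant s$, and then obtain existence of a suitable $\bs{h}$ by taking a maximal element of a finite set. For the reformulation: for any $\bs{h}$, the projection of $Z_{\bs{h}}$ onto the $(\bs{y}_{\bs{h}},u_{\In(\bs{h})})$-coordinates is dominant if and only if $\J_{\bs{h}}\cap\CC[\bs{y}_{\bs{h}},u_{\In(\bs{h})}]=\{0\}$, and by Lemma~\ref{lem:intersection} this intersection equals $\J\cap\CC[\bs{y}_{\bs{h}},u_{\In(\bs{h})}]$; so requirement~\ref{req:1} holds exactly when the images of $\bs{y}_{\bs{h}}\cup u_{\In(\bs{h})}$ in $\cF$ are algebraically independent over $\CC$.

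For the bound, suppose $\bs{h}$ satisfies requirement~\ref{req:1}. By Remark~\ref{rem:triangular}, $\cS_{\bs{h}}$ is a triangular set for $\J_{\bs{h}}$ whose free variables lie among $x_1,\dots,x_n$, $\bfmu$, and $u_{\In(\bs{h})}$; since the polynomials of a triangular set have pairwise distinct leaders and $\operatorname{codim}Z_{\bs{h}}=|\cS_{\bs{h}}|$ (the codimension identity for triangular sets used in the proof of Proposition~\ref{prop:dominant}), the number $\dim Z_{\bs{h}}$ equals the number of free variables, hence $\dim Z_{\bs{h}}\leqslant n+\lambda+|u_{\In(\bs{h})}|=s+|u_{\In(\bs{h})}|$. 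On the other hand $Z_{\bs{h}}$ is irreducible (Lemma~\ref{lem:intersection}), and since the projection onto the $(\bs{y}_{\bs{h}},u_{\In(\bs{h})})$-space is dominant, its image is dense there, so $\dim Z_{\bs{h}}\geqslant|\bs{y}_{\bs{h}}|+|u_{\In(\bs{h})}|=(h_1+\dots+h_m)+|u_{\In(\bs{h})}|$. Comparing the two inequalities gives $h_1+\dots+h_m\leqslant s$, which is the second assertion.

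For existence, let $\mathcal{H}\subset\mathbb{Z}_{\geqslant 0}^m$ be the set of tuples satisfying requirement~\ref{req:1}. It is nonempty: $\bs{0}\in\mathcal{H}$, since then $\cS_{\bs{0}}=\varnothing$, $Z_{\bs{0}}=\mathbb{A}^\lambda$, and the projection onto the empty set of $(\bs{y}_{\bs{0}},u_{\In(\bs{0})})$-coordinates is trivially dominant. By the bound just proved, $\mathcal{H}\subset\{\bs{h}\colon h_1+\dots+h_m\leqslant s\}$, so $\mathcal{H}$ is finite and therefore has a maximal element $\bs{h}$ for the componentwise partial order. For each $1\leqslant i\leqslant m$ the tuple $\bs{h}+\bs{1}_i$ is strictly larger than $\bs{h}$ and hence not in $\mathcal{H}$, which is precisely requirement~\ref{req:2}. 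Thus $\bs{h}$ satisfies both requirements. I expect the only genuine difficulty to be the dimension count in the middle step --- carefully reconciling which variables of $\R_{\bs{h}}$ are free and which are leaders of $\cS_{\bs{h}}$ with the triangular-set codimension identity; given that, finiteness of $\mathcal{H}$ and the maximality argument are routine.
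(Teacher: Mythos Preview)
Your proof is correct and takes essentially the same approach as the paper: bound $h_1+\dots+h_m$ via the triangular-set structure, deduce that the set of tuples satisfying~\ref{req:1} is finite, and pick a maximal element. The only cosmetic difference is the bookkeeping for the bound --- the paper bounds $|\cS_{\bs{h}}|$ from above via Proposition~\ref{prop:dominant} and from below by direct enumeration, while you bound $\dim Z_{\bs{h}}$ from above via Remark~\ref{rem:triangular} and from below via the dimension inequality for dominant maps; since $\dim Z_{\bs{h}}+|\cS_{\bs{h}}|$ equals the number of variables in $\R_{\bs{h}}$, these are dual formulations of the same inequality (and your version has the mild advantage of not invoking the full Jacobian criterion).
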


\begin{proof}
Let $\bs{h} = (h_1,\ldots,h_m)$ satisfy requirement~\ref{req:1}
from Theorem~\ref{thm:alg_criterion}.  By Proposition~\ref{prop:dominant},  $%
|\cS_{\bs{h}}| \leqslant |\bs{x}_{\glssymbolY{State}{\State}(\bs{h})}| + |%
\bfmu|$.  Since also $|\cS_{\bs{h}}|$ is equal to the sum
of $|\bs{y}_{\bs{h}}|$ and the number of the elements of $%
\bs{x}_{\State(\bs{h})}$  of nonzero order, 
\begin{equation*}
|\cS_{\bs{h}}|\geqslant|\bs{y}_{\bs{h}}| + |%
\bs{x}_{\State(\bs{h})}| - n = h_1 + \ldots + h_m + |\bs{x}_{\State(\bs{h})}| - n.
\end{equation*}
Therefore, $h_1 + \ldots + h_m \leqslant n + |\bfmu| = s$.  Thus, $%
h_1 + \ldots + h_m \leqslant s$.

Let $\bs{H}$ be the set of all $\bs{h} \in \mathbb{Z}%
_{\geqslant 0}^{m}$ that satisfy requirement~\ref{req:1} from Theorem~\ref%
{thm:alg_criterion}. Since $h_1 + \ldots + h_m \leqslant s$ for every $%
\bs{h} = (h_1,\ldots,h_m) \in \bs{H}$,  $\bs{H}$ is
finite.  Then it contains a maximal element with respect to the
coordinate-wise partial ordering.  Such an element also satisfies
requirement~\ref{req:2} from Theorem~\ref{thm:alg_criterion}.
\end{proof}

\section{Probabilistic Criterion}
        \label{sec:probabilistic} 

The goal of the present section is to provide theoretical grounds for our
probabilistic algorithm for checking the fiber condition in the statement of
Theorem~\ref{thm:alg_criterion} efficiently.
For an example of using these results in Algorithm~\ref{alg:identifiability}, we refer to Example~\ref{ex:alg}.

\begin{notation}
\label{not:prob} Let $\bs{h} \in \mathbb{Z}^m_{\geqslant 0}$ be a
tuple from the statement of Theorem~\ref{thm:alg_criterion}, $\bs{%
\theta}^{\#}$ be any non-empty subset of $\glssymbolY{parameters}{\bftheta}$, and 
$\bs{h}' \in \mathbb{Z}^m$ be a tuple such that $\bs{h}' - \bs{h} \in \mathbb{Z}_{> 0}^m$. 
We introduce the following affine spaces:
    \begin{enumerate}[(i)]
\item the ambient space $\glssymbolX{V}{V}$ with coordinates $( \glssymbolY{systemparams}{\bfmu},\bs{%
x}_{\glssymbolY{State}{\State}(\bs{h}')}, \bs{y}_{ 
\bs{h}'}, u_{\glssymbolY{In}{\In}(\bs{h}^{\prime})})$; 

\item the input-output space $\glssymbolX{Vio}{V_{io}} \subset V$ with coordinates $(%
\bs{y}_{\bs{h}'}, u_{\In(\bs{h}^{\prime})})$; 

\item the identification space $\glssymbolX{Vsharp}{V_\#} \subset V$ with coordinates $\big(%
\bftheta^{\#},\bs{y}_{\bs{h}'},
u_{\In(\bs{h}^{\prime})}\big)$, where $\bftheta^{\#}
\subset \glssymbolY{parameters}{\bftheta}$ is the set of parameters whose global
identifiability we would like to check.
\end{enumerate}
Below are several natural projections between them: 
\begin{itemize}
  \item $\glssymbolX{piio}{\pi_{io}} \colon V \to V_{io}$ is the projection of what we consider to what we observe;
  \item $\glssymbolX{pisharp}{\pi_\#} \colon V \to V_\#$ is the projection of what we consider to what we care about;
  \item $\glssymbolX{pi}{\pi} \colon V_\# \to V_{io}$ is the projection of what care about to what we observe.
\end{itemize}
Using this notation, we can define $Z$ from the statement of Theorem~\ref%
{thm:alg_criterion} as \[\glssymbolX{Z}{Z} = \overline{\pi_\#(\glssymbolY{Zh}{Z_{\bs{h}'}})}.\]
\end{notation}

\begin{theorem}
\label{thm:probabilictic}  
If parameters $\bftheta^{\#}$ are locally identifiable, then 
there exists a polynomial $P \in \CC[V_\#]$
such that 
    \begin{enumerate}[(i)]
\item $P$ does not vanish everywhere on $Z$; 

\item $\deg P \leqslant (2 + |\bftheta^{\#}|) \deg Z_{\bs{\bs{h}'}}$; 

\item \label{it:lastreq} For all $a \in Z$ such that $P(a) \neq 0$, the
following statements are equivalent: 

            \begin{enumerate}[(a)]
\item \label{thm2:1} every $\theta \in \bftheta^{\#}$ is globally
identifiable, 

\item \label{thm2:2}
        $\big(\J_{\bs{h}'} +
                     \glssymbolY{ideal}{I}(\pi(a))\cdot \CC[V] \big) \cap \CC[V_{\#}] =  \glssymbolY{ideal}{I}(a)$,

\item \label{thm2:3} the zero set of $\big( \J_{\bs{h}'} +  \glssymbolY{ideal}{I}(\pi(a))\cdot \CC[V] \big) \cap \CC[V_{\#}]$ is $\{a\}$. 
\end{enumerate}
\end{enumerate}
\end{theorem}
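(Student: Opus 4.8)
The plan is to reduce everything to the generically finite projection $\pi|_Z\colon Z \to \overline{\pi(Z)}$ and to locate, with explicit degree control, a dense open subset of $\overline{\pi(Z)}$ over which a single fibre already reveals the global identifiability status. Since $\bftheta^{\#}$ is locally identifiable, Proposition~\ref{prop:alg_criterion_local} and its proof show that $\pi|_Z$ is dominant and generically finite; call its degree $d$, so -- $\CC$ being of characteristic zero -- the generic fibre of $\pi|_Z$ consists of exactly $d$ reduced points, while by Theorem~\ref{thm:alg_criterion} every $\theta \in \bftheta^{\#}$ is globally identifiable if and only if $d = 1$. For $a \in Z$ set $b := \pi(a)$ and $\mathfrak{a}_a := \big(\J_{\bs{h}'} + I(b)\cdot\CC[V]\big)\cap\CC[V_\#]$. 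Since $\pi_{io} = \pi\circ\pi_\#$, we have $Z\big(\J_{\bs{h}'} + I(b)\CC[V]\big) = Z_{\bs{h}'}\cap\pi_{io}^{-1}(b)$ and $\pi_\#\big(Z_{\bs{h}'}\cap\pi_{io}^{-1}(b)\big) = \pi_\#(Z_{\bs{h}'})\cap\pi^{-1}(b)$, so $\mathfrak{a}_a$ is the ideal of $\overline{\pi_\#(Z_{\bs{h}'})\cap\pi^{-1}(b)}$, which is contained in $Z\cap\pi^{-1}(b) =: F_b$, and $a\in F_b$. I will build $\bar P\in\CC[V_{io}]$ with $\bar P\notin I(\overline{\pi(Z)})$ and $\deg\bar P\leqslant(2+|\bftheta^{\#}|)\deg Z_{\bs{h}'}$, and take $P := \pi^\ast\bar P$; since $\pi$ is a coordinate projection and $\pi|_Z$ is dominant, $P$ has the same degree and does not vanish on all of $Z$, giving (i) and (ii), and it remains to secure~\ref{it:lastreq}.

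The first two degree units in the budget come from two ``good fibre'' conditions. First, $Z\setminus\pi_\#(Z_{\bs{h}'})$ lies in a proper closed subset $W\subsetneq Z$; as local identifiability gives $\dim Z = \dim\overline{\pi(Z)}$, the image $\overline{\pi(W)}$ is a proper closed subset of $\overline{\pi(Z)}$, and one chooses $\delta_1\in\CC[V_{io}]$, $\delta_1\notin I(\overline{\pi(Z)})$, vanishing on it with $\deg\delta_1\leqslant\deg Z_{\bs{h}'}$ (degree control obtained as in the proof of Proposition~\ref{prop:dominant}, via the $Q z - 1$ localization). Second, since $Z_{\bs{h}'}$ is a variety and $\CC$ has characteristic zero, the fibres of $Z_{\bs{h}'}\to\overline{\pi_{io}(Z_{\bs{h}'})} = \overline{\pi(Z)}$ are reduced over a dense open, whose complement is cut out by some $\delta_2\in\CC[V_{io}]$, $\delta_2\notin I(\overline{\pi(Z)})$, with $\deg\delta_2\leqslant\deg Z_{\bs{h}'}$ (a Bézout estimate for the branch locus). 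On $\{\delta_1\delta_2\neq 0\}$ the scheme $Z_{\bs{h}'}\cap\pi_{io}^{-1}(b)$ is reduced, hence $\mathfrak{a}_a$ is radical (a subring of a reduced ring is reduced), and $F_b = \pi_\#(Z_{\bs{h}'})\cap\pi^{-1}(b)$ is finite, so $\mathfrak{a}_a = I(F_b)$. Therefore, on $\{\delta_1\delta_2\neq 0\}$, \ref{thm2:3} holds $\iff F_b = \{a\} \iff |F_b| = 1$, and likewise \ref{thm2:2} (i.e.\ $\mathfrak{a}_a = I(a)$) holds $\iff F_b = \{a\} \iff |F_b| = 1$; thus \ref{thm2:2}$\iff$\ref{thm2:3} is automatic and~\ref{it:lastreq} reduces to the equivalence $|F_b| = 1 \iff d = 1$ on $\{\bar P\neq 0\}$.

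Because $d$ is a fixed invariant of $Z$, it is legitimate to split into cases. If $d = 1$, the constructive direction of Theorem~\ref{thm:alg_criterion} provides, for each $\theta_i\in\bftheta^{\#}$, a relation $B_i\theta_i - A_i\in\J_{\bs{h}'}$ with $B_i\notin\J_{\bs{h}'}$; as the closure $W_i$ of the image of $Z$ under the projection to $(\theta_i,\bs{y}_{\bs{h}'},u_{\In(\bs{h}')})$-coordinates is the graph of a rational function, one may take $\deg A_i,\deg B_i\leqslant\deg W_i\leqslant\deg Z\leqslant\deg Z_{\bs{h}'}$. Setting $\bar P := \delta_1\delta_2\prod_{i=1}^{|\bftheta^{\#}|}B_i$ gives $\deg\bar P\leqslant(2+|\bftheta^{\#}|)\deg Z_{\bs{h}'}$, and for $a$ with $\bar P(b)\neq 0$ each $\theta_i$ equals $A_i(b)/B_i(b)$ on $F_b$, so $|F_b| = 1$, consistent with $d = 1$; thus \ref{thm2:1}, \ref{thm2:2}, \ref{thm2:3} are all true on $\{P\neq 0\}$. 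If $d\geqslant 2$, some $\theta_{i_0}\in\bftheta^{\#}$ is not globally identifiable, so a minimal polynomial of $\theta_{i_0}$ over $\CC(\overline{\pi(Z)})$ has $T$-degree at least $2$ and is separable; taking $\bar P := \delta_1\delta_2$ times its leading coefficient and discriminant, cleared of denominators, forces $\theta_{i_0}$ to take at least two values on $F_b$ whenever $\bar P(b)\neq 0$, so $|F_b|\geqslant 2$, consistent with $d\geqslant 2$; thus \ref{thm2:1}, \ref{thm2:2}, \ref{thm2:3} are all false on $\{P\neq 0\}$. In both cases $P = \pi^\ast\bar P$ fulfils (i)--\ref{it:lastreq}.

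The conceptual step -- converting global identifiability into the one-fibre conditions \ref{thm2:2}/\ref{thm2:3} -- is immediate from Theorem~\ref{thm:alg_criterion}, Proposition~\ref{prop:alg_criterion_local}, and the radicality bookkeeping above. The main obstacle is the \emph{effective} part: producing the boundary factor $\delta_1$, the branch-locus factor $\delta_2$, and (in the case $d\geqslant 2$) the discriminant-type factor while staying inside the budget $(2+|\bftheta^{\#}|)\deg Z_{\bs{h}'}$ rather than merely a polynomial-in-$\deg Z_{\bs{h}'}$ bound; this requires careful Bézout-type degree estimates for projections of $Z_{\bs{h}'}$ together with tight control of the $Q$-localizations.
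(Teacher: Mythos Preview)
Your overall architecture—splitting on whether the generic degree $d$ of $\pi|_Z$ is $1$ or $\geqslant 2$, and building $P$ as a product of an ``image-lifting'' factor (your $\delta_1$, the paper's $P_{\mathrm{lift}}$ from Lemma~\ref{lem:surjective}) and case-specific factors—matches the paper, and your $d=1$ branch is essentially the paper's: the relations $B_i\theta_i - A_i\in\J_{\bs{h}'}$ with $\deg B_i\leqslant\deg Z$ are precisely what Lemma~\ref{lem:finite_fiber}\ref{item:3lem7} packages.

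There is, however, a genuine gap in your $d\geqslant 2$ branch. Non-vanishing of the leading coefficient and discriminant of a minimal polynomial $p(T)$ of $\theta_{i_0}$ over $\CC(\overline{\pi(Z)})$ only tells you that $p(T,b)$ has at least two distinct roots; it does \emph{not} tell you that each root is realised as the $\theta_{i_0}$-coordinate of a point of $F_b = Z\cap\pi^{-1}(b)$. The roots live in $W_{i_0}\cap\pi^{-1}(b)$, and you would need a further surjectivity factor controlling the projection $Z\to W_{i_0}$—another Lemma~\ref{lem:surjective}-type term costing extra degree. Moreover, the discriminant of a degree-$d'$ polynomial with coefficients of degree $\leqslant\deg Z$ has degree up to $(2d'-2)\deg Z$, which need not fit into the remaining $|\bftheta^{\#}|\deg Z_{\bs{h}'}$ budget. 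The paper takes a different, topological route: it uses Lemma~\ref{lem:finite_fiber}\ref{item:1lem7} (a compactness factor $P_\infty$, degree $\leqslant\deg Z$) and Lemma~\ref{lem:finite_fiber}\ref{item:2lem7} (an implicit-function-theorem factor $P_{\mathrm{mult}}$, degree $\leqslant|\bftheta^{\#}|(\deg Z-1)$), and argues that if $|F_{\pi(a)}|=1$ then over a small Euclidean ball around $\pi(a)$ the preimage $\pi^{-1}(B)\cap Z$ is compact with a single connected component mapping bijectively to its image, forcing $d=1$. This avoids discriminants entirely and the degrees add up.

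A smaller point: your $\delta_2$ (cutting out the locus of non-reduced scheme fibres of $\pi_{io}|_{Z_{\bs{h}'}}$ with $\deg\delta_2\leqslant\deg Z_{\bs{h}'}$) is asserted via ``a B\'ezout estimate for the branch locus'', but $\pi_{io}|_{Z_{\bs{h}'}}$ is not generically finite, so branch-locus reasoning does not apply directly and that degree bound needs a real argument. The paper does not need any such radicality factor: in the $d=1$ case the ideal equality~\ref{thm2:2} is obtained by showing that the explicit degree-one relations already generate a maximal ideal once $P_{\mathrm{lift}}$ guarantees properness, and in the $d\geqslant 2$ case only the zero-set statement~\ref{thm2:3} is attacked.
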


Before proving Theorem~\ref{thm:probabilictic}, we formulate and prove two
technical lemmas, in which we do \emph{not} use any notation introduced
beyond Section~\ref{sec:notation}. 
We will denote the $d$-dimensional affine and  projective $\CC$-spaces by $\mathbb{A}^d$ and $\mathbb{P}^d$, respectively.


\begin{lemma}
\label{lem:finite_fiber}  Let $n$, $m$, and $r$ be non-negative integers
such that $n = m + r$, $X \subset \mathbb{A}^n = \mathbb{A}^r \times \mathbb{%
A}^m$ an irreducible variety, and $\pi \colon \mathbb{A}^n \to \mathbb{A}^m$
the projection onto the second component.  Assume that the generic fiber of $%
\pi|_X$ is finite. 
    \begin{enumerate}[(i)]
\item \label{item:1lem7} Then there exists a proper subvariety $Y \subset 
\overline{\pi(X)}$ such that 
            \begin{enumerate}[(a)]
\item $\deg Y \leqslant \deg X$ and 

\item for every point $p \in \overline{\pi(X)} \setminus Y$, there exists a
closed (in the standard topology) ball $B \subset \mathbb{A}^m$  centered at 
$p$ such that $\pi^{-1}(B) \cap X$ is compact (in the standard topology) and 
$\pi^{-1}(p)\cap X\ne \varnothing$. 
\end{enumerate}

\item \label{item:2lem7} Then there exists a hypersurface $H \subset \mathbb{%
                A}^n$ (possibly, empty) not containing $X$ such that 
            \begin{enumerate}[(a)]
\item $\deg H \leqslant r\cdot ( \deg X-1)$ and 

\item for every $p \in X \setminus H$, there exists a closed ball (in the
standard topology) $B \subset \mathbb{A}^m$  centered at $\pi(p)$ such that $%
\pi$ defines a bijection between the connected component (in the standard
topology) of $\pi^{-1}(B) \cap X$  containing $p$ and its image. 
\end{enumerate}
\item \label{item:3lem7} If the generic fiber of $\pi|_X$ is of cardinality
        one,  then there exists a hypersurface $H \subset \mathbb{A}^n$ (possibly empty) not
containing $X$ such that 
            \begin{enumerate}[(a)]
\item $\deg H \leqslant r\cdot \deg X$ and 

\item for every $p \in X \setminus H$,  
\begin{equation*}
        \glssymbolY{ideal}{I}(X) +  \glssymbolY{ideal}{I}(\pi(p))\cdot\CC[\mathbb{A}^n] =  \glssymbolY{ideal}{I}(p).
\end{equation*}
\end{enumerate}
\end{enumerate}
\end{lemma}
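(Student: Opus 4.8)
## Proof plan for Lemma~\ref{lem:finite_fiber}

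The plan is to prove the three parts in order, each building on a common geometric picture: after a generic linear change of coordinates on the fiber direction $\mathbb{A}^r$, the projection $\pi|_X$ becomes a finite map over a dense open subset of $\overline{\pi(X)}$, and one can control the ``bad locus'' by an explicit degree bound coming from resultants or, more efficiently, from the Jacobian.

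\emph{Part (i).} First I would put $X$ in general position: since the generic fiber of $\pi|_X$ is finite, $\dim X = \dim \overline{\pi(X)} =: m'$, and after a generic linear change of the $\mathbb{A}^r$-coordinates the projection $\mathbb{A}^n = \mathbb{A}^r\times\mathbb{A}^m \to \mathbb{A}^{m'}$ obtained by composing $\pi$ with a coordinate projection $\mathbb{A}^m\to\mathbb{A}^{m'}$ is a Noether normalization in a neighbourhood of the generic point, so $X$ is finite over $\overline{\pi(X)}$ away from a proper closed subset. The set $Y$ I would take is the union of $\operatorname{Sing}(\overline{\pi(X)})$, the branch locus, and the image of the locus where $X$ fails to be finite; a standard argument (e.g.\ via the ramification/discriminant divisor, or Noether normalization applied fibrewise) shows $Y$ is a proper subvariety of $\overline{\pi(X)}$ with $\deg Y \leqslant \deg X$ — here I would invoke a B\'ezout-type degree bound on the image of $X$ under $\pi$ together with the fact that the discriminant locus has degree no larger than the degree of $X$. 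Over $\overline{\pi(X)}\setminus Y$ the map $\pi|_X$ is finite and hence proper (and surjective onto its image), so for $p \notin Y$ a small closed ball $B$ around $p$ pulls back to a compact set and $\pi^{-1}(p)\cap X \neq\varnothing$. The main technical care here is getting the degree of $Y$ to be bounded by $\deg X$ rather than something quadratic; I expect to do this by choosing $Y$ to be (the closure of the $\pi$-image of) the non-normal/non-finite locus of $X$, whose degree is controlled by $\deg X$.

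\emph{Part (ii).} Now I would lift the ball from part (i). Take $H$ to be (the closure of) the locus in $X$ where $\pi|_X$ is not \'etale over $\overline{\pi(X)}$, i.e.\ the vanishing of the appropriate maximal minors of the Jacobian of a defining system of $X$ with respect to the fiber coordinates (after the generic linear change of coordinates, so that these $r$ coordinates are exactly the ones being ``solved for''). For $p \in X\setminus H$ the implicit function theorem in the standard topology gives, over a small closed ball $B$ centered at $\pi(p)$, a single-valued analytic section through $p$; the connected component of $\pi^{-1}(B)\cap X$ containing $p$ is then the graph of that section, and $\pi$ restricts to a bijection onto its image. The degree bound $\deg H \leqslant r\cdot(\deg X - 1)$ comes from the fact that each of the $r$ partial-derivative entries in the relevant minor has degree at most $\deg X - 1$, so a single minor (which after the coordinate change can be taken to be a product of such derivatives along a suitable flag, or a single $r\times r$ determinant) has degree at most $r(\deg X-1)$; I would phrase $H$ as the vanishing of one such minor that does not vanish identically on $X$, which exists precisely because the generic fiber is finite (so $\pi|_X$ is generically \'etale after the coordinate change). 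The hardest bookkeeping here is making sure the Jacobian criterion is applied to a defining set of $X$ whose degree is $\deg X$, not larger — I would use that an irreducible variety of degree $d$ is cut out set-theoretically, near its generic point, by hypersurfaces of degree $\leqslant d$.

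\emph{Part (iii).} Finally, under the cardinality-one hypothesis I would enlarge $H$ slightly. The bijectivity from part (ii) already handles the scheme-theoretic statement away from $H$; what remains is to rule out the pathology that $\pi^{-1}(\pi(p))\cap X$ contains a point other than $p$ (globally, not just in the connected component), because the equality $I(X) + I(\pi(p))\cdot\CC[\mathbb{A}^n] = I(p)$ is about the full scheme-theoretic fiber. So I would add to $H$ the ``apparent collision'' locus: the set of $p\in X$ such that some other point of $X$ has the same $\pi$-image and agrees with $p$ in one extra fiber coordinate — concretely, for each pair of fiber coordinates, the image in $X$ of the diagonal-minus-diagonal locus in $X\times_{\overline{\pi(X)}} X$. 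Because the generic fiber is a single reduced point, this locus is a proper subvariety of $X$, and a degree count (B\'ezout on $X\times_{\overline{\pi(X)}}X$, or a resultant in the fiber coordinates) gives that the additional component contributes at most $\deg X$ more, for a total $\deg H \leqslant r\cdot \deg X$. For $p$ outside this $H$, the fiber $\pi^{-1}(\pi(p))\cap X$ is the single reduced point $p$, which is exactly the ideal-theoretic equation claimed. I expect \emph{this last step — controlling the global fiber (not merely a local branch) and squeezing its contribution into the extra $r\cdot\deg X - r\cdot(\deg X-1) = r$ worth of degree budget} — to be the main obstacle; the resolution is to observe that passing from ``\'etale'' to ``fiber is a single point'' costs exactly one extra factor of degree $\leqslant \deg X$ in one fiber coordinate direction, e.g.\ via a resultant whose degree in the relevant variable is $\deg X$ rather than $\deg X - 1$.
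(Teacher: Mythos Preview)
Your overall geometric picture is sound, but there are genuine gaps in parts~(i) and~(iii), and part~(ii) needs the same trick the paper uses to make the degree bound come out.

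\textbf{Part~(i).} You propose $Y$ as the union of the singular locus, the branch locus, and the non-finite locus, and hope to get $\deg Y \leqslant \deg X$. This does not work: the branch (discriminant) locus of a degree-$d$ cover typically has degree of order $d^2$, not $d$, and it is also irrelevant to compactness. What you actually need for compactness of $\pi^{-1}(B)\cap X$ is only that no sequence in $X$ over $B$ escapes to infinity in the $\mathbb{A}^r$-direction. The paper captures exactly this: embed $\mathbb{A}^r\hookrightarrow\mathbb{P}^r$, take the closure $\overline{X}^{\mathbb{P}}\subset\mathbb{P}^r\times\mathbb{A}^m$, and set $Y:=\pi_{\mathbb{P}}\bigl(\overline{X}^{\mathbb{P}}\cap H_\infty\bigr)$ where $H_\infty$ is the hyperplane at infinity. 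Then $\deg Y\leqslant\deg(\overline{X}^{\mathbb{P}}\cap H_\infty)\leqslant\deg X$ immediately, since $H_\infty$ is a hyperplane. Compactness over $p\notin Y$ follows because the closure $\overline{X}^{\mathbb{P}}$ is proper over $\mathbb{A}^m$ and meets $H_\infty$ only above $Y$. Your Noether-normalization route does not give this clean bound.

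\textbf{Part~(ii).} Your Jacobian idea is right, but to make the degree bound $r(\deg X-1)$ come out you cannot work with an arbitrary defining system of $X$. The paper's device: pick a transcendence basis $S\subset\{y_1,\ldots,y_m\}$ for $\mathbb{C}[\bs{y}]$ modulo $I(X)$, and for each $i$ project $X$ to the $(x_i,S)$-plane; this image is an irreducible hypersurface cut out by a single $P_i(x_i,\bs{y})$ with $\deg P_i\leqslant\deg X$. The partials $\partial P_i/\partial x_i$ are then $r$ polynomials of degree $\leqslant\deg X-1$, their product defines $H$, and the implicit function theorem applies to $(P_1,\ldots,P_r)$ because each $P_i$ involves only $x_i$ among the fiber variables. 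Your ``hypersurfaces of degree $\leqslant d$ cutting out $X$ near the generic point'' exist, but you would need a careful argument that some $r$ of them have a Jacobian in the $x$-variables that is generically invertible on $X$; the individual projections give this for free.

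\textbf{Part~(iii).} Here there is both an arithmetic gap and a structural one. You try to enlarge $H$ from part~(ii) by a collision locus and observe that the available degree budget is $r\cdot\deg X - r(\deg X-1)=r$, yet you also say the extra piece costs ``one extra factor of degree $\leqslant\deg X$''. When $\deg X>r$ these are incompatible, so your bound does not close. Moreover, removing collisions only gives the set-theoretic statement $\pi^{-1}(\pi(p))\cap X=\{p\}$; to get the ideal equality you also need reducedness of the scheme-theoretic fiber, which you have not isolated. The paper does not extend part~(ii) at all. Instead it uses that, since the generic fiber has cardinality one, a triangular-set presentation of $I(X)$ has each $P_i$ \emph{linear} in $x_i$ (and involving no other $x_j$). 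By the Dahan--Schost bound the coefficients of $P_i$ have degree $\leqslant\deg X$; let $H$ be the zero set of the product of the $r$ leading coefficients, so $\deg H\leqslant r\cdot\deg X$. For $p\in X\setminus H$, substituting $\pi(p)$ into each $P_i$ gives a linear equation $a_ix_i+b_i$ with $a_i\neq 0$, so $I(X)+I(\pi(p))\cdot\mathbb{C}[\mathbb{A}^n]$ already contains generators of $I(p)$. This argument is both simpler and gives the ideal-theoretic conclusion directly.
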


\begin{proof}
We introduce coordinates $\bs{x} := (x_1, \ldots, x_r)$ in $\mathbb{A%
}^r$ and $\bs{y} := (y_1, \ldots, y_m)$ in $\mathbb{A}^m$.  The
condition that the generic fiber of $\pi|_X$ is finite implies that  $\dim X
= \dim \pi(X)$~\cite[Theorem 1.25(ii)]{Shafarevich}, so each element of $%
\bs{x}$ is algebraic over $\CC[\bs{y}]$ modulo $I(X)$%
. We prove each claim.

    \begin{enumerate}[(i)]
\item Embed $\mathbb{A}^r$ into $\mathbb{P}^r$, then $\pi$ can be extended
to $\pi_\mathbb{P} \colon \mathbb{P}^r \times \mathbb{A}^m \to \mathbb{A}^m$%
.  Let  
\begin{equation*}
H_{\infty} := \mathbb{P}^r \times \mathbb{A}^m \setminus \mathbb{A}^r \times 
\mathbb{A}^m  
\end{equation*}
and $\overline{X}^\mathbb{P}$ be the closure of $X$ in $\mathbb{P}^r \times 
\mathbb{A}^m$.  We set  
\begin{equation*}
Y := \pi_{\mathbb{P}} \big( \overline{X}^\mathbb{P} \cap H_\infty \big).  
\end{equation*}
By \cite[Corollary~9, page 431]{CLO}, $Y\subset\overline{\pi(X)}$.  Since $%
\pi|_X$ has finite generic fiber, $\dim X = \dim \overline{\pi(X)}$ by \cite[%
Theorem~1.25(ii)]{Shafarevich}.  On the other hand,  
\begin{equation*}
\dim Y \leqslant \dim \overline{X}^\mathbb{P} \cap H_{\infty} \leqslant \dim
X - 1,  
\end{equation*}
hence $Y$ is a proper subvariety in $\overline{\pi(X)}$.  Also,  
\begin{equation*}
\deg Y \leqslant \deg \overline{X}^\mathbb{P} \cap H_{\infty} \leqslant \deg
X.  
\end{equation*}

Let $p \in \overline{ \pi(X) } \setminus Y$ and $B \subset \mathbb{A}^m$ a
closed ball (of a positive radius) centered at $p$  such that $\pi^{-1}(B)
\cap X$ is compact.  Such a $B$ exists by \cite[Lemma~2]{LazardRouillier}. 
Moreover, $\pi^{-1}(B) \cap X\ne \varnothing$. Indeed, \cite[Theorem~1,
page~58]{Mumford}  implies that $\pi(X)$ is dense in $\overline{\pi(X)}$
with respect to the standard topology,  so $B$ contains at least one point
of $\pi(X)$.  Let us show that $\pi^{-1}(p)\cap X\ne \varnothing$. For this,
let $p_1, p_2, \ldots \in \pi(X) \cap B$ be a sequence of points  converging
to $p$, which exists because $p \in \overline{\pi(X)}$. Let $q_1,q_2,\ldots
\in \pi^{-1}(B) \cap X$ be such that $\pi(q_i)=p_i$, $i\geqslant 1$. Since $%
\pi^{-1}(B) \cap X$ is compact, there exists a converging subsequence  of
the sequence $q_1, q_2, \ldots$ with a limit $q \in X$. Since $p_1, p_2,
\ldots$ converge to $p$ and $\pi$ is continuous,  $\pi(q) = p$.

\item We choose a subset $S \subset \{ \bs{y} \}$ that is a
transcendence basis of $\CC[\bs{y}]$ modulo $I(X)$
over $%
\CC$.  For every $i$, $1 \leqslant i \leqslant r$, the projection of $%
X$ to the $(x_i, S)$-coordinates is an irreducible hypersurface  of degree
at most $\deg X$.  We denote its defining irreducible polynomial by $%
P_i(x_i, \bs{y})$.  For every $i$, $1\leqslant i \leqslant r$, $%
\frac{\partial}{\partial x_i} P_i$ does not vanish everywhere on $X$. 
Hence,  
\begin{equation*}
P:= \prod\limits_{i = 1}^r \frac{\partial}{\partial x_i} P_i  
\end{equation*}
does not vanish everywhere on $X$. Let $H := Z(P)$.
We prove that $H$ satisfies the requirements.

Consider $p \in X \setminus H$, and let $p = (\hat{\bs{x}}, 
\hat{\bs{y}})$.  Let $\widetilde{X} := Z(P_1, \ldots, P_r)$, a
subvariety in $\mathbb{A}^r \times \mathbb{A}^m$.  Then $X \subset 
\widetilde{X}$.  Since $P(p)\neq 0$, the Jacobian of $P_1, \ldots, P_r$ with
respect to $\bs{x}$  is invertible at $p$.  The implicit function
theorem~\cite[Theorem~3.1.4]{ComplexAnalysis} (applied to $X = \mathbb{A}^m$%
, $Y = \mathbb{A}^r$, $f = (P_1, \ldots, P_r)$)  implies that there exist
neighborhoods $U_1 \subset \mathbb{A}^m$ and $U_2 \subset \mathbb{A}^r$ of  $%
\hat{\bs{y}}$ and $\hat{\bs{x}}$, respectively, such that $S(%
\widetilde{X} \cap U_2\times U_1)$  is a graph of some function $%
\varphi\colon U_1 \to U_2$, where $S(\bs{a},\bs{b}):=(%
\bs{b},\bs{a})$, $\bs{a}\in \mathbb{A}^r$, $%
\bs{b} \in \mathbb{A}^m$.  Hence, $\pi$ defines a bijection between $%
\widetilde{X} \cap U_2\times U_1$ and its image under $\pi$.

Let $B \subset U_{1}$ be a closed ball centered at $\pi(p)$ and $C$ the
connected component of  $\widetilde{X} \cap \pi^{-1}(B)$ containing $p$. 
Let $G_B$ denote the graph of $\varphi|_B$.  We claim that $C \subset S(G_B)$%
.  Consider the intersection  
\begin{equation*}
C \cap S(G_B) = C\cap \big(\widetilde{X} \cap (U_2\times B)\big).  
\end{equation*}
Since $C \subset \widetilde{X}$, the latter intersection is equal to $C\cap
(U_2 \times B)$.  Since $\pi(C) \subset B$, the latter intersection is the
same as $C\cap (U_2 \times U_1)$.  Hence, 
\begin{equation}  \label{eq:SGB}
C \cap S(G_B) = C\cap (U_2\times U_1).
\end{equation}
Since $S(G_B)$ is closed, $U_2 \times U_1$ is open, and $C$ is connected,  
\begin{equation*}
C\cap (U_2\times U_1)=\varnothing\quad\text{or}\quad C\cap (U_2\times U_1)=C.
\end{equation*}
Since $p\in C\cap (U_2\times U_1)$, by~\eqref{eq:SGB}, we have $C \cap
S(G_B)=C$, and so $C \subset S(G_B)$.  Since $S(G_B)$ maps bijectively onto $%
\pi(S(G_B))$, $C$ also maps bijectively onto $\pi(C)$.  Since $X \subset 
\widetilde{X}$, the connected component $C_0$ of $\pi^{-1}(B) \cap X$
containing $p$ is a subset of $C$,  so $\pi$ defines a bijection from $C_0
$ to $\pi(C_0)$.

\item Since each element of $\bs{x}$ is algebraic over $\CC[%
\bs{y}]$ modulo $I(X)$,
there exists a representation of $I(X)$ as
a triangular set $P_1, \ldots, P_r,\ldots$ with respect to an ordering of
the form  
\begin{equation*}
x_1 > x_2 > \ldots > x_r > \bs{y} \text{ in some order}  
\end{equation*}
so that $x_i$ is the leading variable of $P_i$, $1\leqslant i\leqslant r$. 
Since the cardinality of the generic fiber is one, $\deg_{x_i} P_i = 1$ for
every $i$, $1 \leqslant i \leqslant r$ (see \cite[page~562]{Schost}).  Since 
$P_i$ is reduced with respect to $P_{i + 1}, \ldots, P_{r}$, $P_i$ does not
depend on any variable in $\bs{x}$  except for $x_i$.  \cite[%
Theorem~2]{SchostDahan} implies that the triangular set can be chosen in
such a way that the degrees of the  coefficients of $P_i$ as a polynomial
with respect to $x_i$ do not exceed $\deg X$.  We set $P$ to be the product
of the leading coefficients of $P_1, \ldots, P_r$ and $H := Z(P)$
in $\mathbb{A}^n$.  Then \[\deg P \leqslant r\cdot \deg X.\]

Let $p \in X \setminus H$.  Then $\pi(p)$ is not a zero of $P$.  We observe
that $P_1, \ldots, P_r \in I(X)$ since $I(X)$ is a prime ideal.  Then the
ideal 
\begin{equation*}
J := I(X) + I(\pi(p))\cdot \CC[\mathbb{A}^n]
\end{equation*}
contains a polynomial $P_i(\mathbf{x}, \pi(p)) = a_i x_i + b_i$, where $%
a_i\in \CC^\ast$, $b_i \in \CC$, for every $1 \leqslant i
\leqslant r$.  Since $p$ is a common zero of all polynomials in $J$, $\frac{%
-b_i}{a_i}$ is the value of the $x_i$-th coordinate of $p$.  Since $I(\pi(p))
$ also contains a polynomial of the form $y_j - c_j$, where $c_j \in \mathbb{%
C}$ is the value of the $y_j$-th coordinate of $p$, for every $j$, $1
\leqslant j \leqslant m$, the ideal $J$ is simply $I(p)$.\qedhere
\end{enumerate}
\end{proof}


\begin{lemma}
\label{lem:surjective}  Let $n$, $m$, and $r$ be non-negative integers such
that $n=m+r$, $X \subset \mathbb{A}^n = \mathbb{A}^r \times \mathbb{A}^m$ an
irreducible variety, and $\pi \colon \mathbb{A}^n \to \mathbb{A}^m$ the
projection onto the second component.  Then there exists a proper subvariety 
$Y \subset \overline{\pi(X)}$ such that 

    \begin{enumerate}[(i)]
\item $\deg Y \leqslant \deg X$ and 

\item for every $p \in \overline{\pi(X)} \setminus Y$, $\pi^{-1}(p) \cap
X\ne\varnothing$. 
\end{enumerate}
\end{lemma}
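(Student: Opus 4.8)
The plan is to reduce this to Lemma~\ref{lem:finite_fiber}\ref{item:1lem7}, which already produces such a $Y$, with the same bound $\deg Y \leqslant \deg X$, under the extra hypothesis that the generic fiber of $\pi|_X$ is finite. We may assume $X \neq \varnothing$. Put $d := \dim X$, $e := \dim \overline{\pi(X)}$, and $k := d - e$; by~\cite[Theorem~1.25]{Shafarevich} the generic fiber of $\pi|_X$ has dimension $k$, and since it sits inside a copy of $\mathbb{A}^r$ we have $0 \leqslant k \leqslant r$. If $k = 0$, Lemma~\ref{lem:finite_fiber}\ref{item:1lem7} applies to $X$ directly and we are done, so assume $k \geqslant 1$.

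The first real step is to cut $X$ down to the finite-fiber case. I would choose affine-linear forms $\ell_1, \ldots, \ell_k \in \CC[x_1, \ldots, x_r]$ with generic coefficients, regard them as functions on $\mathbb{A}^n = \mathbb{A}^r \times \mathbb{A}^m$ not involving the $\mathbb{A}^m$-coordinates, and set $X' := X \cap \{\ell_1 = \cdots = \ell_k = 0\}$. Passing to the generic fiber $X_\eta$ of $\pi|_X$ over $\overline{\pi(X)}$ --- an integral variety of dimension $k \geqslant 1$ over $F := \CC(\overline{\pi(X)})$ sitting in $\mathbb{A}^r_F$ --- one checks that for generic coefficients none of the $\ell_i$ vanishes on any component of the variety cut out so far (the forms that do form a proper linear subspace), and, by taking projective closures over $\overline{F}$, that the successive hyperplane sections of $X_\eta$ stay non-empty; hence $X_\eta \cap \{\ell_1 = \cdots = \ell_k = 0\}$ is non-empty of dimension $0$. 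Spreading this out over a dense open subset of $\overline{\pi(X)}$ shows that $\overline{\pi(X')} = \overline{\pi(X)}$, while $X'$ is pure of dimension $d - k = e$ by Krull's principal ideal theorem; so some irreducible component $X_0$ of $X'$ has $\overline{\pi(X_0)} = \overline{\pi(X)}$, and this $X_0$ is irreducible with $\deg X_0 \leqslant \deg X$ (B\'ezout) and $\dim X_0 = e = \dim \overline{\pi(X_0)}$, so the generic fiber of $\pi|_{X_0}$ is finite.

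Finally, I would apply Lemma~\ref{lem:finite_fiber}\ref{item:1lem7} to $X_0 \subset \mathbb{A}^r \times \mathbb{A}^m$: it yields a proper subvariety $Y \subset \overline{\pi(X_0)} = \overline{\pi(X)}$ with $\deg Y \leqslant \deg X_0 \leqslant \deg X$ such that $\pi^{-1}(p) \cap X_0 \neq \varnothing$ for every $p \in \overline{\pi(X)} \setminus Y$; since $X_0 \subseteq X$ this gives $\pi^{-1}(p) \cap X \neq \varnothing$ for all such $p$, as required.

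I expect the hyperplane-cutting step to be the main obstacle: the genericity of the forms $\ell_i$ must be made \emph{uniform over the base} $\overline{\pi(X)}$, i.e.\ forms with generic coefficients in $\CC$ must behave on $X_\eta$ like a generic hyperplane section over the larger field $F = \CC(\overline{\pi(X)})$ --- a relative Bertini phenomenon, valid because the bad coefficient vectors form a proper $\CC$-Zariski-closed set. This cutting is genuinely necessary: one cannot simply imitate the compactification argument proving Lemma~\ref{lem:finite_fiber}\ref{item:1lem7}, where $Y = \pi_{\mathbb{P}}\big(\overline{X}^{\mathbb{P}} \cap H_\infty\big)$ is \emph{proper} in $\overline{\pi(X)}$ only because $\dim \overline{\pi(X)} = \dim X$; once the generic fibers of $\pi|_X$ are positive-dimensional (e.g.\ $X = \mathbb{A}^1 \times \{0\} \subset \mathbb{A}^1 \times \mathbb{A}^1$), that set can be all of $\overline{\pi(X)}$.
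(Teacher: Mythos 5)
Your proof is correct and follows the same overall architecture as the paper's: reduce to the case of finite generic fibers by intersecting $X$ with generic linear sections, then invoke statement~\ref{item:1lem7} of Lemma~\ref{lem:finite_fiber} and observe that a preimage in the section is a preimage in $X$. The difference lies in how the crucial point --- that the section still dominates $\overline{\pi(X)}$ --- is established. The paper cuts with generic hyperplanes of the full ambient space $\mathbb{A}^n$, uses Bertini's theorem to keep each successive section irreducible (which is why it needs $\dim X\geqslant 2$ and treats $\dim\overline{\pi(X)}=0$ as a separate base case), and proves dominance by an interpolation argument: it chooses finitely many points $p_1,\ldots,p_N$ of the image over which the fibers are positive-dimensional and such that any polynomial of degree at most $\deg X$ vanishing at all of them must vanish on $\overline{\pi(X)}$, arranges that a generic hyperplane meets each of these fibers, and concludes via the degree bound for projections that $\overline{\pi(H\cap X)}$ cannot be proper. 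You instead cut only in the fiber directions, pass to the generic fiber $X_\eta$ over $F=\CC\big(\overline{\pi(X)}\big)$, and argue that affine-linear forms with generic \emph{complex} coefficients behave like generic forms over $F$ because $\CC^N$ is Zariski-dense in $\mathbb{A}^N$ over $F$; you then dispense with irreducibility of the section altogether by selecting a dominating component. Your route trades Bertini and the interpolation step for the relative-genericity argument, which you correctly flag as the delicate point and which does go through (including the check, which you gesture at, that the successive sections of $X_\eta$ are not pushed entirely to infinity); both routes yield the same bound $\deg Y\leqslant\deg X$. Your closing observation about why the bare compactification argument from the proof of statement~\ref{item:1lem7} of Lemma~\ref{lem:finite_fiber} cannot be applied directly when the fibers are positive-dimensional is also accurate.
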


\begin{proof}
If $\dim \overline{\pi(X)}=0$, then the irreducibility of $X$ implies that $%
\overline{\pi(X)}$ is a single point. Hence, we can choose $Y=\varnothing$.
Before finishing the proof, we will first prove the following.

\begin{claim}
Assume that both the generic fiber of $\pi|_X$ and $\overline{\pi(X)}$  are
not zero-dimensional.  Then there exists a non-empty open subset $\mathcal{U}
$ in the space of all hyperplanes in $\mathbb{A}^n$ such that, for all $H\in 
\mathcal{U}$, 
\begin{itemize}
\item $X \not\subset H$, 

\item $H\cap X$ is irreducible, and 

\item $H\cap X$ projects \glssymbolY{dominant}{dominantly} onto $\pi(X)$. 
\end{itemize}
\end{claim}

\begin{proof}
In this case, $\dim X\geqslant 2$ by~\cite[Theorem~1.25(ii)]{Shafarevich},
so Bertini's theorem~\cite[III.7.(i)]{Baldassarri}  implies that $H \cap X$
is irreducible for a generic $H$.  Let $U$ be a non-empty open subset of $%
\overline{\pi(X)}$ such that, for every $p \in U$, 
\begin{equation*}
\dim \pi^{-1}(p) \cap X > 0.
\end{equation*}
Such a $U$ exists by~\cite[Theorem~1.25(ii)]{Shafarevich}.  Since $U$ is
dense in $\overline{\pi(X)}$, there exists a set of points $p_1, \ldots, p_N
\in U$ for some $N$  such that, if a polynomial  
of degree at most $\deg X$  vanishes at $p_1, \ldots, p_N$, then it vanishes
on $\overline{\pi(X)}$.  Consider a hyperplane $H$ such that 
    \begin{enumerate}[(a)]
\item \label{cond:1} $H \cap X$ is irreducible and 
\item \label{cond:2} for every $i$, $1 \leqslant i \leqslant N$,  
\begin{equation*}
H \cap \pi^{-1}(p_i) \cap X\ne\varnothing.
\end{equation*}
\end{enumerate}
Since conditions~\ref{cond:1} and~\ref{cond:2} are generic, the
conjunction is also generic.  Let $Y := \overline{\pi(H \cap X)}$, then $%
p_1, \ldots, p_N \in Y$.  Since $\deg Y \leqslant \deg X$, \cite[%
Proposition~3]{Heintz} implies that $Y$ can be defined by polynomials of
degree at most $\deg X$.  If $Y\subsetneq\overline{\pi(X)}$, then there
exists a polynomial  
of degree at most $\deg X$ that vanishes  on $Y$ (and, in particular, at $%
p_1, \ldots, p_N$), but does not vanish on $\overline{\pi(X)}$.  This is
impossible.  . 
\end{proof}

We now return to the proof of Lemma~\ref{lem:surjective}. Assume that the
generic fiber of $\pi|_X$ has dimension $d$. Applying the claim $d$ times,
we obtain an affine subspace $L$ such that  $X \cap L$ is irreducible, $X
\cap L$ projects \glssymbolY{dominant}{dominantly} onto $\overline{\pi(X)}$, and the generic fiber
of $\pi|_{X \cap L}$ is finite by~\cite[Theorem~1.25(ii)]{Shafarevich}. 
Applying statement~\ref{item:1lem7} of Lemma~\ref{lem:finite_fiber} to $X
\cap L$, we obtain a subvariety $Y \subset \overline{\pi(X)}$ of degree  at
most $\deg X$ such that every point in $\overline{\pi(X)} \setminus Y$ has a
preimage in $X \cap L$.  Then it has a preimage in $X$.
\end{proof}


\begin{proof}[Proof of Theorem~\ref{thm:probabilictic}]  Since $\bftheta%
^{\#}$ are locally identifiable $\pi|_{\glssymbolY{Z}{Z}}$ has finite  generic fiber due to
Proposition~\ref{prop:alg_criterion_local}.  We will construct a polynomial $%
P \in \CC[\glssymbolY{Vsharp}{V_\#}]$ as follows 
\begin{itemize}
\item If $\bftheta^{\#}$ is globally identifiable, then we set $P$
to be the product  of the polynomials $P_{1}$ and $P_{\func{lift}}$ defined
below. 
        \begin{enumerate}[(a)]
\item Applying statement~\ref{item:3lem7} of Lemma~\ref{lem:finite_fiber} 
with $X = \glssymbolY{Z}{Z}$ and $\pi = \glssymbolY{pi}{\pi}$, we obtain a hypersurface $H \subset \glssymbolY{Vio}{V_{io}}$ 
of degree at most $|\bftheta^\#| \deg Z$.  We set $P_1$ to be the
defining polynomial of $H$.

\item Applying Lemma~\ref{lem:surjective} to $X = \glssymbolY{Zh}{Z_{\bs{h}'}}$ and $\pi = \glssymbolY{pisharp}{\pi_\#}$, we obtain a proper subvariety $Y_2
\subset Z$  of degree at most $\deg Z_{\bs{h}'}$. 
Since the generic fiber of $\glssymbolY{pi}{\pi}|_Z$ is finite, \cite[Theorem~1.25(ii)]%
{Shafarevich} implies  that $\dim Z = \dim \pi(Z)$, so $\overline{\pi(Y_2)}$
is a proper subvariety of $\overline{\pi(Z)}$.  \cite[Proposition~3]{Heintz}
implies that $\overline{\pi(Y_2)}$ can be defined  by polynomials of degree
at most $\deg Z_{\bs{h}'}$.  We set $P_{\func{lift}}$
to be one of these polynomials that  does not vanish everywhere on $Z$. 
\end{enumerate}

\item If $\bftheta^{\#}$ is not globally identifiable, then we
set $P$ to be the product  of the polynomial $P_{\func{lift}}$ defined above
and the polynomials $P_{\infty}$ and $P_{\func{mult}}$ defined below. 
        \begin{enumerate}[(a)]
\item Applying statement~\ref{item:1lem7} of Lemma~\ref{lem:finite_fiber}
with $X = \glssymbolY{Z}{Z}$ and $\pi = \glssymbolY{pi}{\pi}$,  we obtain a proper subvariety $Y_1 \subset 
\overline{\pi(Z)}$ of degree at most $\deg Z$.  \cite[Proposition~3]{Heintz}
implies that $Y_1$ can be defined by polynomials of degree at most $\deg Z$.
We set $P_{\infty}$ to be one of these polynomials that does not vanish
everywhere on $\overline{\pi(Z)}$.

\item Applying statement~\ref{item:2lem7} of Lemma~\ref{lem:finite_fiber}
with $X = Z$ and $\pi = \pi$,  we obtain a hypersurface $H \subset \glssymbolY{Vsharp}{V_\#}$ of
degree at most $|\bftheta^\#| (\deg Z - 1)$.  We set $P_{\func{%
mult}}$ to be the irreducible defining polynomial of~$H$. 
\end{enumerate}
\end{itemize}

Summing up the degree bounds, we obtain  
\begin{equation*}
\deg P \leqslant \max(\deg (P_1 \cdot P_{\func{lift}}), \deg (P_\infty \cdot
P_{\func{mult}} \cdot P_{\func{lift}})) \leqslant \big(2 + |\bs{%
\theta}^\#|\big)\cdot \deg \glssymbolY{Zh}{Z_{\bs{h}'}}.  
\end{equation*}
In order to prove that $P$ satisfies requirement~\ref{it:lastreq}, we 
consider $a \in Z$ such that $P(a) \neq 0$.

To prove \underline{\ref{thm2:1}$\implies$\ref{thm2:2}}, assume that 
the parameters $\bftheta^{\#}$ are globally identifiable.  Since $%
P_1(a) \neq 0$, the choice of $P_1$  (see statement~\ref{item:3lem7} of
Lemma~\ref{lem:finite_fiber}) implies that  
\begin{equation}  \label{eq:IZIa}
I(Z) + I(\pi(a))\cdot\CC[V_\#] = I(a).
\end{equation}
Since $I(Z) = I(\J_{\bs{h}'}) \cap \CC[V_{\#}]
$,  
\begin{equation}  \label{eq:glob_ident_inclusion}
I(Z) + I(\pi(a))\cdot\CC[V_\#] \subset \left( \J_{\bs{h}'} + I(\pi(a))\cdot \CC[\glssymbolY{V}{V}] \right) \cap \CC%
[V_{\#}].
\end{equation}
Since $P_{\func{lift}}(a)\neq 0$, 
\begin{equation*}
\pi_{\#}^{-1}(a) \cap Z_{\bs{h}'}\ne \varnothing.
\end{equation*}
Hence, the ideal $\J_{\bs{h}'} + I(\pi(a))\cdot 
\CC[V]$ is proper,  and so the right-hand side of~%
\eqref{eq:glob_ident_inclusion} is a proper ideal of $\CC[V_{\#}]$. 
Since, by~\eqref{eq:IZIa}, it contains the maximal ideal $I(a)$, it
coincides with $I(a)$.

The implication \underline{\ref{thm2:2}$\implies$\ref{thm2:3}} follows
from $Z(I(a))=\{ a\}$.

To prove \underline{\ref{thm2:3}$\implies$\ref{thm2:1}}, we assume that 
the parameters $\bftheta^{\#}$ are not globally identifiable. 
Denote the cardinality of the generic fiber of $\pi|_{Z}$ by $d > 1$.  We
define  
\begin{equation*}
C(a) := \glssymbolY{pisharp}{\pi_\#}\left( \glssymbolY{piio}{\pi_{io}^{-1}}(\pi(a)) \cap Z_{\bs{h}'} \right).  
\end{equation*}
A direct computation shows that  
        \begin{equation*} 
           J
        := \left( \J_{\bs{h}'} + I(\pi(a))\cdot \CC%
        [V] \right) \cap \CC[V_\#]  
        \end{equation*}
vanishes at all the points of $C(a)$.  Thus, if we prove that $|C(a)| > 1$,
this would imply that the zero set of $J$ in not $\{ a \}$.

Since $P_{\func{lift}}(a) \neq 0$ and $P \in C[V_{io}]$, for all $b\in
\pi^{-1}(\pi(a))$, $P_{\func{lift}}(b) \neq 0$. Hence, the choice of $P_{%
\func{lift}}$ implies (see Lemma~\ref{lem:surjective}) that, for all $p \in
\pi^{-1}(\pi(a)) \cap Z$,  
\begin{equation}  \label{eq:pisharpZhe}
\pi_\#^{-1}(p)\cap Z_{\bs{h}'} \ne\varnothing.
\end{equation}
Since $\pi_\#^{-1}(p)\subset\pi_{io}^{-1}(\pi(p))$ and $\pi(a)=\pi(p)$, we
have $\pi_\#^{-1}(p)\subset \pi_{io}^{-1}(\pi(a))$.  Hence,  
\begin{equation*}
\pi_\#^{-1}(p)\cap Z_{\bs{h}'} \subset
\pi_{io}^{-1}(\pi(a))\cap Z_{\bs{h}'}.
\end{equation*}
Therefore, using~\eqref{eq:pisharpZhe},  
\begin{equation*}
\pi^{-1}(\glssymbolY{pi}{\pi}(a)) \cap Z \subset\pi_\#\left( \pi_{io}^{-1}(\pi(a)) \cap \glssymbolY{Zh}{Z_{%
\bs{h}'}} \right).
\end{equation*}
Thus,  
\begin{equation*}
|C(a)|\geqslant |\pi^{-1}(\pi(a)) \cap Z|.
\end{equation*}
Let $B_1$ and $B_2$ be closed balls in $V_{io}$ centered at $\pi(a)$ such
that 
\begin{itemize}
\item $\pi^{-1}(B_1)\cap Z$ is compact and 

\item $\pi$ defines a bijection between the connected component of $%
\pi^{-1}(B_2)\cap Z$ containing $a$ and its image. 
\end{itemize}
The existence of such $B_1$ and $B_2$ is implied by statements~%
\ref{item:1lem7} and~\ref{item:2lem7} of Lemma~\ref{lem:finite_fiber}
because $P_{\func{mult}}(a) \ne 0$ and $P_\infty(a)\ne0$ . We set $B =
B_1\cap B_2$.  Let $\mathcal{C}(B)$ be the set of connected components of $%
\pi^{-1}(B) \cap Z$. Since $\pi^{-1}(B) \cap Z$ is compact, the set $%
\mathcal{C}(B)$ is finite. Let $D$ be the union of all $C \in \mathcal{C}(B)$
such that 
\begin{equation*}
C\cap \pi^{-1}(\pi(a)) \cap Z=\varnothing.
\end{equation*}
Suppose that $D\ne \varnothing$. Since $D$ is compact, $\pi(D)$ is compact,
therefore, closed. Moreover, $\pi(a)\notin\pi(D)$. Let $B^{\prime }$ be a
closed ball centered at $\pi(a)$ such that $\pi(D)\cap B^{\prime
}=\varnothing$. We have $B^{\prime }\subset B$ and, for every $C \in 
\mathcal{C}(B^{\prime })$, 
\begin{equation*}
C\cap \pi^{-1}(\pi(a)) \cap Z\ne\varnothing.
\end{equation*}
If $D = \varnothing$, we set $B^{\prime }=B$.  Assume that \[|\pi^{-1}(\pi(a)) \cap \glssymbolY{Z}{Z}| = 1.\] Hence, $\pi^{-1}(B^{\prime }) \cap Z$ has
exactly one connected component.  
Therefore, for every $p^{\prime }\in \pi(Z) \cap B^{\prime }$, we have  
\begin{equation*}
|\pi^{-1}(p^{\prime })\cap Z| = 1.
\end{equation*}
Since $\pi(Z)\cap B^{\prime }$ is Zariski dense in $\pi(Z)$, we arrive at a
contradiction with the assumption that the generic fiber is of cardinality $%
d > 1$.  Hence, $|C(a)| > 1$.\qedhere
\end{proof}

\begin{notation}
Recall that
    \glssymbolY{Q}{$Q$}
is the common denominator of
    \glssymbolY{statefunc}{$\bff$}
and 
    \glssymbolY{outputfunc}{$\bfg$}.
Let  $d_{0}=\max(\deg \glssymbolY{Q}{\ensuremath{Q}}\glssymbolY{outputfunc}{\bfg},\deg \glssymbolY{Q}{\ensuremath{Q}}\glssymbolY{statefunc}{\bff})$. 
\end{notation}

The following statement (with Theorem~\ref{thm:probabilictic} and
Proposition~\ref{prop:dominant}) is used in our design of Algorithm~\ref%
{alg:identifiability}.

\begin{proposition}
\label{prop:subst}  For all $\bs{h}^{\prime }\in \mathbb{Z}%
_{\geqslant 0}^m$ and $P \in \R_{\bs{h}^{\prime }}$ such that  $P$
does not vanish everywhere on $Z_{\bs{h}^{\prime }}$,  there exists
a polynomial $\widetilde{P} \in \CC[\glssymbolY{parameters}{\bftheta}, u_{\In(\bs{h}^{\prime})}]$ such that
\begin{itemize}
\item  $\deg \widetilde{P} \leqslant (1 + d_0\cdot(2\max 
\bs{h}^{\prime }- 1))\cdot \deg P$ and
\item for all $\tilde{%
\bftheta}$ and $\tilde{u}_{\In(\bs{h}^{\prime})}$,  
\begin{equation*}
\widetilde{P}(\tilde{\bftheta}, \tilde{u}_{\In(\bs{h}^{\prime})}) \neq 0 \implies P \text{ does not vanish at } \pi_{ip}^{-1}(\tilde{%
\bftheta}, \tilde{u}_{\In(\bs{h}^{\prime})}) \cap Z_{%
\bs{h}^{\prime }},  
\end{equation*}
where $\pi_{ip}$ is the projection from the space with coordinates $( 
\glssymbolY{systemparams}{\bfmu},\bs{x}_{\glssymbolY{State}{\State}(\bs{h^{\prime})}}, \bs{y}_{%
\bs{h^{\prime }}}, u_{\glssymbolY{In}{\In}(\bs{h^{\prime})}})$ to the space with
coordinates $(\glssymbolY{parameters}{\bftheta}, u_{\In(\bs{h^{\prime})}})=(%
\glssymbolY{systemparams}{\bfmu},x_1,\ldots,x_n, u_{\In(\bs{h^{\prime})}})$.
\end{itemize}
\end{proposition}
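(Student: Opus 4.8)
The plan is to take $\widetilde{P}$ to be (a scalar multiple of) the pseudo-remainder of $P$ by the triangular set $\cS_{\bs{h}'}$ and to read off the degree bound by making this reduction explicit. Recall from Remark~\ref{rem:triangular} that, for a suitable ordering, $\cS_{\bs{h}'}$ is a triangular set whose free variables are exactly the coordinates $(\bftheta, u_{\In(\bs{h}')}) = (\bfmu, x_1, \ldots, x_n, u_{\In(\bs{h}')})$, so its leading variables are precisely the remaining coordinates of $\R_{\bs{h}'}$, namely the $x_i^{(j)}$ with $j \geqslant 1$ and the $y_k^{(j)}$ with $j \geqslant 0$. Moreover, by the Leibniz rule each generator of $\cS_{\bs{h}'}$ is \emph{linear} in its leading variable with leading coefficient $Q$: $(Qx_i' - F_i)^{(j-1)} = Q\, x_i^{(j)} + (\text{terms of lower order in } x_i)$ and $(Qy_k - G_k)^{(j)} = Q\, y_k^{(j)} + (\text{terms not involving } y_k^{(j)})$, and these generators belong to $\cS_{\bs{h}'} \subseteq \J_{\bs{h}'}$ whenever $x_i^{(j)}$, resp.\ $y_k^{(j)}$, is a variable of $\R_{\bs{h}'}$.

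First I would solve these relations for the leading variables, one at a time, in order of increasing differentiation order and with the $x$-variables before the $y$-variables, to prove by induction that for each leading variable $v$ there are an exponent $e_v \geqslant 1$ and a polynomial $N_v \in \CC[\bftheta, u_{\In(\bs{h}')}]$ with $Q^{e_v} v \equiv N_v \pmod{\J_{\bs{h}'}}$, where $e_{x_i^{(j)}} = 2j - 1$, $e_{y_k^{(j)}} = 2j + 1$, and $\deg N_v \leqslant e_v\, d_0$. The recursion is $Q\, x_i^{(j+1)} \equiv F_i^{(j)} - \sum_{l=1}^{j}\binom{j}{l}Q^{(l)} x_i^{(j+1-l)} \pmod{\J_{\bs{h}'}}$, and similarly for $y_k$; expanding $F_i^{(j)}$ and $Q^{(l)}$ by the multivariate chain rule, every monomial that occurs is a partial derivative of $F_i$ (resp.\ $Q$) of some order $a$ times a product of $a$ factors $z^{(p)}$ with $p \geqslant 1$ whose orders sum to the number of differentiations, and the monomial carrying $Q$ to the maximal denominator power $2j-1$ is $\frac{\partial F_i}{\partial x_r}\, x_r^{(j)}$; a degree count then closes the induction. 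Since in $\R_{\bs{h}'}$ the orders of the $y_k$ reach at most $h_k' - 1$ and those of the $x_i$ at most $\max\bs{h}' - 1$, the largest of the $e_v$ is $2\max\bs{h}' - 1$.

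Next I would set $E := \max\{\sum_{v \mid m} e_v : m \text{ a monomial of } P\}$, where leading variables are counted with multiplicity and free variables contribute $0$, so $E \leqslant (2\max\bs{h}' - 1)\deg P$; and I would let $\widetilde{P}$ be the polynomial obtained from $Q^{E} P$ by replacing, in each monomial, every leading variable $v$ by $N_v$ and compensating the missing power of $Q$. By construction $Q^{E} P \equiv \widetilde{P} \pmod{\J_{\bs{h}'}}$ and $\widetilde{P} \in \CC[\bftheta, u_{\In(\bs{h}')}]$; also $\widetilde{P} \neq 0$, since $P$ not vanishing everywhere on $Z_{\bs{h}'}$ means $P \notin \J_{\bs{h}'}$, while $\J_{\bs{h}'}$ is prime with $Q \notin \J_{\bs{h}'}$ (Lemmas~\ref{lem:empty_intersection} and~\ref{lem:intersection}). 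For the degree, each monomial $m$ of $P$, with $E_m := \sum_{v \mid m} e_v$ and $a_m$ free-variable factors, contributes a term of degree at most $(E - E_m)d_0 + E_m d_0 + a_m = E d_0 + a_m \leqslant E d_0 + \deg P$ (using that $d_0$ bounds $\deg Q, \deg F_i, \deg G_k$ and that $\deg N_v \leqslant e_v d_0$), whence $\deg \widetilde{P} \leqslant E d_0 + \deg P \leqslant (1 + d_0(2\max\bs{h}' - 1))\deg P$. Finally, for the implication: given $p \in \pi_{ip}^{-1}(\tilde{\bftheta}, \tilde{u}_{\In(\bs{h}')}) \cap Z_{\bs{h}'}$, the element $Q^{E} P - \widetilde{P} \in \J_{\bs{h}'}$ vanishes at $p$, and both $Q$ and $\widetilde{P}$ involve only the coordinates $(\bftheta, u_{\In(\bs{h}')})$, which at $p$ equal $(\tilde{\bftheta}, \tilde{u}_{\In(\bs{h}')})$, so
\[
Q(\tilde{\bftheta}, \tilde{u}_{\In(\bs{h}')})^{E}\, P(p) \;=\; \widetilde{P}(\tilde{\bftheta}, \tilde{u}_{\In(\bs{h}')});
\]
thus $\widetilde{P}(\tilde{\bftheta}, \tilde{u}_{\In(\bs{h}')}) \neq 0$ forces $P(p) \neq 0$, and as $p$ ranges over the whole fiber we are done (if the fiber is empty there is nothing to prove).

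The main obstacle is the induction of the second paragraph together with the bookkeeping of the third: making the power of $Q$ and the degree of $N_v$ come out as exactly $2j \mp 1$ and $\leqslant e_v d_0$ demands a careful Fa\`a di Bruno analysis — pinning down which monomials of $F_i^{(j)}$ and $Q^{(j)}$ realize the maximal denominator, and checking that the coefficient degrees drop by the number of differentiations carried out — and then one must confirm that the per-monomial estimates sum up to exactly the stated constant (this is also where $\deg Q \leqslant d_0$, built into the definition of $d_0$, is used). Everything else — the triangular and linear structure of $\cS_{\bs{h}'}$ and the one-line evaluation argument for the implication — is immediate from Remark~\ref{rem:triangular} and from the primality already available.
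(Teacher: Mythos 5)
Your proposal is correct and is essentially the paper's own argument: the paper also pseudo-reduces $P$ modulo the triangular set $\cS_{\bs{h}'}$, using a weight function $w\big(x_i^{(j)}\big)=\max(0,2j-1)$, $w\big(y_k^{(j)}\big)=2j+1$ that coincides with your exponents $e_v$ to bound the number of multiplications by $Q$ and hence the degree, and it concludes with the same evaluation identity $Q^q(\tilde{\bftheta},\tilde{u})\,P(p)=\widetilde{P}(\tilde{\bftheta},\tilde{u})$ on the fiber. The only cosmetic difference is that the paper iterates a one-step reduction operator $r$ (eliminating one leading variable at a time and tracking termination via the strictly decreasing weight), whereas you precompute the normal forms $Q^{e_v}v\equiv N_v$ and substitute them all at once; the bookkeeping is identical.
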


\begin{proof}
Consider the ranking on $\R_{\bs{h}^{\prime }}$ defined in the proof
of Lemma~\ref{lem:gens_and_prime}.  We will define a linear operator $%
r\colon \R_{\bs{h}^{\prime }} \to \R_{\bs{h}^{\prime }}$. 
Consider a monomial $m \in \R_{\bs{h}^{\prime }}$.  Let $v$ be the
leading variable of $m$, so $m = v \cdot \tilde{m}$.  Then  
\begin{equation*}
r(m) :=  
\begin{cases}
\left(Qx_i^{(j + 1)} - (Qx_i^{\prime }- F_i)^{(j)} \right)\tilde{m}, & v =
x_i^{(j + 1)}, \\ 
\left(Qy_i^{(j)} - (Qy_i - G_i)^{(j)} \right)\tilde{m}, & v = y_i^{(j)}, \\ 
Q\cdot m, & \text{otherwise}.%
\end{cases}
\end{equation*}
By the definition of $r(P)$ and $\J_{\bs{h}^{\prime }}$,  
\begin{equation}  \label{eq:r_prop}
QP - r(P) \in \J_{\bs{h}^{\prime }} \text{ for every } P \in \R_{%
\bs{h}^{\prime }}.
\end{equation}
We introduce the following weight function $w$ by  
\begin{equation*}
\begin{cases}
w\big(u^{(i)}\big) = 0, & i \geqslant 0, \\ 
w\big(x_j^{(i)}\big) = \max(0, 2i - 1), & 1 \leqslant j \leqslant n,\ i
\geqslant 0, \\ 
w\big(y_j^{(i)}\big) = 2i + 1, & 1 \leqslant j \leqslant m,\ i \geqslant 0,
\\ 
w(\mu_i) = 0, & \mu_i \text{ in }\bfmu,%
\end{cases}
\end{equation*}
(extending $w$ multiplicatively to monomials and as the $\max$ to sums of
monomials).  A direct computation shows that
\[r(P) \neq QP \implies w(P) > w(r(P)). 
\]   
Thus, there exists a finite sequence $P_0, \ldots, P_q$ such that  
\begin{equation*}
P_0 = P,\ \ \ P_{i + 1} = r(P_i) \neq QP_i\ \text{ for all }\ 0 \leqslant i
< q,\ \ \text{and}\ \ \ r(P_q) = QP_q.
\end{equation*}
We set $\widetilde{P} := P_q$.  Since $w(P) \leqslant \deg P\cdot
(2\cdot\max \bs{h}^{\prime }- 1)$, we have 
\begin{equation*}
q\leqslant\deg P\cdot (2\cdot\max \bs{h}^{\prime }- 1).
\end{equation*}
Therefore,  
\begin{equation*}
\deg \widetilde{P} \leqslant \deg P + \deg P \cdot d_0 \cdot (2\cdot\max 
\bs{h}^{\prime }- 1) = (1 + d_0\cdot (2\cdot\max \bs{h}%
^{\prime }- 1))\cdot \deg P.  
\end{equation*}
Since $r(\widetilde{P}) = Q\widetilde{P}$, we have $\widetilde{P} \in 
\CC[\bftheta, u_{\In(\bs{h}^{\prime})}]$.  Due to~%
\eqref{eq:r_prop}, we have 
\begin{equation*}
Q^q \cdot P - \widetilde{P} \in \J_{\bs{h}^{\prime }}.
\end{equation*}
Therefore, for all $\tilde{\bftheta}$, $\tilde{u}_{\In(\bs{h}^{\prime})}$, and $p \in \pi_{ip}^{-1}\big(\tilde{\bftheta}, 
\tilde{u}_{\In(\bs{h}^{\prime})}\big) \cap Z_{\bs{h}^{\prime }}$%
,  we have 
\begin{equation*}
\widetilde{P}\big(\tilde{\bftheta}, \tilde{u}_{\In(\bs{h}^{\prime})}\big)=Q^q\big(\tilde{\bftheta}, \tilde{u}_{\In(\bs{h}^{\prime})}\big)\cdot P(p).
\end{equation*}
Hence, if $\widetilde{P}\big(\tilde{\bftheta}, \tilde{u}_{\In(\bs{h}^{\prime})}\big) \ne 0$, then $P(p)\ne 0$.
\end{proof}

\section{Algorithm}

In this section, by integrating Theorems~\ref{thm:alg_criterion} and~\ref%
{thm:probabilictic} and Propositions~\ref{prop:dominant} and~\ref{prop:subst}%
, we provide a probabilistic algorithm for checking global
identifiability~(Algorithm \ref{alg:identifiability}) and prove its
correctness (Theorem~\ref{thm:correctness}).

\begin{remark}
\label{rem:probability}
Note that the input-output specification of
Algorithm~\ref{alg:identifiability} states that   $\bftheta^g$ is equal, with probability at least $p$, to the set of all globally identifiable parameters in $\bftheta^\ell$. Let us state it more formally. Let $\bftheta^{ga}$ stand for the set of all globally identifiable parameters in $\bftheta^\ell$.
Imagine running the algorithm $n$ times with the same input. 
Let  $c_n$ stand for the number of runs with the correct output, that is, $\bftheta^{g}=\bftheta^{ga}$.
Then the following is guaranteed
\begin{equation*}
\lim_{n\to\infty} \frac{c_n}{n}\geqslant p.
\end{equation*}
\end{remark}

\label{sec:algorithm} 

\begin{notation}
In the steps of Algorithm~\ref{alg:identifiability}, we use Notation~\ref{not:subs_der} and 
\[
  \bs{u}=\left( \glssymbolY{input}{\ensuremath{u}}, \glssymbolY{input}{\ensuremath{u}}^{(1)}, \ldots, \glssymbolY{input}{\ensuremath{u}}^{(s)} \right),\quad \text{ where } s
= |\glssymbolY{parameters}{\bftheta}|.
\]
\end{notation}

\begin{algorithm}[p]
\caption{\sl Global\_Identifiability}\label{alg:identifiability}
\begin{description}[style=nextline,leftmargin=0.3cm,labelwidth=1.6em,align=parleft]
  \item[In] 
    \begin{enumerate}
    \item[$\Sigma$:]
      an algebraic differential model 
      
      given by rational functions $\glssymbolY{statefunc}{\bff}(\glssymbolY{states}{\ensuremath{\bfx}},\glssymbolY{systemparams}{\bfmu},\glssymbolY{input}{\ensuremath{u}})$ and $\glssymbolY{outputfunc}{\bfg}(\glssymbolY{states}{\ensuremath{\bfx}},\glssymbolY{systemparams}{\bfmu},\glssymbolY{input}{\ensuremath{u}})$    
    \item[$\bftheta^\ell$:] 
 a subset of 
$\glssymbolY{parameters}{\bftheta}
=\glssymbolY{systemparams}{\bfmu}\cup \glssymbolY{initialstates}{\ensuremath{\bfxs}}$ 
 with
every parameter in $\bs{\protect\theta}^\ell$  
locally identifiable
\item[$p$\hfill:] 
      an element of $(0, 1)$
    \end{enumerate}
 \item[Out]
     \begin{enumerate}
     \item[$\bftheta^g$:] a subset of $\bftheta^\ell$ that is equal, with probability at least $p,$ to
     the set of 
     
     all globally identifiable parameters in $\bftheta^\ell$ (see Remark~\ref{rem:probability})
    \end{enumerate}
\end{description}

\begin{enumerate}[leftmargin=-0.1cm,labelwidth=1.5em,label=\arabic*]
\item {[Construct the maximal system $E$ of algebraic equations]}

\begin{enumerate}[leftmargin=1cm]
  \item $s\leftarrow |\glssymbolY{parameters}{\bftheta}|$,
      $Q$ $\leftarrow$ the common denominator of
    \glssymbolY{statefunc}{$\bff$}
        and
    \glssymbolY{outputfunc}{$\bfg$}

  \item\label{step:1b} $X_{i0}\leftarrow x_{i}^{\left(  0\right)  }-x_{i}^{\ast}\ $for $1\leqslant i\leqslant n$

  \item\label{step:1c} $X_{ij} \leftarrow \left(Qx_i' - Qf_i \right)^{(j-1)}$ 
  for $1\leqslant i\leqslant n$ and $1\leqslant j\,\leqslant s$

  \item\label{step:1d} $Y_{ij} \leftarrow \left(Qy_i - Qg_i \right)^{(j)}$ for
  $1\leqslant i\leqslant m$ and $0\leqslant j\,\leqslant s$

  \item $E\leftarrow$ the set of all $X_{ij}$ and $Y_{ij}$ computed in Steps~\ref{step:1b},~\ref{step:1c}, and~\ref{step:1d}.
\end{enumerate}

\item\label{alg1:Step2} {[Truncate the system $E$, obtaining  $E^t$]}
\begin{enumerate}[leftmargin=1cm]
  \item $d_{0}\leftarrow\max(\deg Q\glssymbolY{statefunc}{\bff},\deg Q\glssymbolY{outputfunc}{\bfg}, \deg Q)$

  \item $D_{1}\leftarrow 2d_{0}s(n + 1)(1 + 2d_{0}s)/(1-p)$
  \item\label{alg1:Step2c} $\bs{\hat{\theta}},\hat{\bs{u}%
  }\;\leftarrow$ 
random vectors  of
integers  from $[1,D_{1}]$  and $Q(\bs{\hat{\theta}}, \hat{u}_0) \neq 0$

  \item\label{alg1:Step2d} Find the unique solution of the triangular system 
  $E\big[\glssymbolY{parameters}{\bftheta}\leftarrow\bs{\hat{\theta}},\bs{u}\leftarrow\hat{\bs{u}}\big]$ 
  
  (in which each equation is linear in its leader) for all the variables. 
  
  Denote the $x_i^{(j)}$- and $y_i^{(j)}$-components of the solutions 
  by $\hat{x}_{ij}$ and $\hat{y}_{ij}$.

  \item Let $\bs{\alpha}\leftarrow\left(  1,\ldots,1\right)  \in\mathbb{Z}_{\geqslant 0}^{n}$, 
  $\bs{\beta}\leftarrow\left(  0,\ldots,0\right)  \in\mathbb{Z}_{\geqslant 0}^{m}$,  $E^t\leftarrow\left\{ X_{10}, \ldots, X_{n0} \right\}  $

  \item\label{step2:f} While there exists $k$ such that 
  
  the rank of
  $\func{Jac}_{\bftheta,\bs{x}_{\bs{\alpha}}}\left(  E^t%
  \cup\{Y_{k \beta_k}\}\right)  $ at $\big(\hat{\bftheta},  \hat{\bs{x}}%
  _{\bs{\alpha}},\hat{\bs{y}}_{\bs{\beta}}%
  ,\hat{\bs{u}}\big)  $ is equal to $|E^t| + 1$

\begin{enumerate}
\item Add $Y_{k,\beta_{k}}$ to $E^t$ and then increment $\beta_k$

\item\label{step:2f2} While $x_{i}^{(j)}$ appears in $E^t \cup \big\{Y_{1 \beta_1},  \ldots, Y_{m \beta_m}\big\}$ but $X_{ij} \notin E^{t}$, 

\quad add $X_{ij}$ to $E^t$

\item\label{step:2f3} Set $\alpha_i \leftarrow 
\max\limits_{P \in E^t} \glssymbolY{difforder}{\ord}_{x_i} P+1$ for every $1 \leqslant i \leqslant n$
\end{enumerate}

\item\label{step:2g} While for some $1 \leqslant i \leqslant m$ all $\bs{x}$-variables
in $Y_{i(\beta_i + 1)}$ belong to $\bs{x}_{\bs{\alpha}}$, 

\quad add $Y_{i(\beta_i + 1)}$ to $E^t$ and increment $\beta_i$

\end{enumerate}

\item {[Randomize some variables in $E^t$, obtaining $\widehat{E^t}$]}
\begin{enumerate}[leftmargin=1cm]
\item $D_{2} \leftarrow 6|\bftheta^\ell|\left(  \prod\limits_{P\in
E^t}\deg P\right)  (1 + 2d_{0}\max\bs{\beta})/(1-p)$

\item \label{alg1:Step3b}$\bs{\hat{\theta}},\hat{\bs{u}%
}\;\leftarrow$ random vectors  of  
integers
from
$[1,D_{2}]$ and $Q(\bs{\hat{\theta}}, \hat{u}_0) \neq 0$

\item Find the unique solution of the triangular system 
  $E^t\big[\glssymbolY{parameters}{\bftheta}\leftarrow\bs{\hat{{\theta}}},
        \bs{u}\leftarrow\hat{\bs{u}}\big]$  
        
        (in which each equation is linear in its leader) for all the variables.  
  
Denote the $x_i^{(j)}$- and $y_i^{(j)}$-components of the solutions 
  by $\hat{x}_{ij}$ and $\hat{y}_{ij}$.
  
\item $\widehat{E^t} \leftarrow E^t\left[  \bs{y}_{\bs{\beta}}\leftarrow \hat{\bs{y}}_{\bs{\beta}},\ \bs{u}\leftarrow
\hat{\bs{u}}\right],\;\;\;\;\;\;  \widehat{Q} \leftarrow  Q[\bs{u} \leftarrow \hat{\bs{u}}]$

\end{enumerate}
\item\label{alg1:Step4} {[Determine $\bftheta^g$ from  $\widehat{E^t}$]} 
\begin{enumerate}[leftmargin=1cm]
  \item $\bftheta^g \leftarrow 
    \big\{ \theta \in \bftheta^\ell \:\big|\:  \text{the system } 
    \widehat{E^t}=0\: \&\: \widehat{Q} \neq 0\: \&\: \theta\neq \hat{\theta} \text{ is inconsistent}\big\}
  $
\end{enumerate}
\end{enumerate}
\end{algorithm}


\begin{remark}
\label{rem:proj_comp} In Step 4 of Algorithm~\ref{alg:identifiability}, we
check the consistency of a system of equations and inequations. This can be
done in many different ways. We list a few.
    \begin{enumerate}[leftmargin=2.2cm,labelsep=0.25cm,label=Method~(\roman*)]
\item \label{method:1} For each $\theta \in \bftheta^\ell$, check
the inconsistency of the following system of equations  
\begin{equation*}
\widehat{E^t} \cup \big\{z\cdot\hat{Q}-1,w(\theta - \hat{\theta}) - 1\big\},
\end{equation*}%
where $z$ and $w$ are new variables introduced for the Rabinowitsch trick.
This can be done by using, for instance, Gr\"obner bases.

\item \label{method:2} For each $\theta \in \bftheta^\ell$, check
\begin{equation}  \label{eq:viaGB}
\theta - \hat{\theta} \in \func{Ideal}\big(\widehat{E^t} \cup \{z\cdot\widehat{Q}%
-1\}\big).
\end{equation}
This can be done by using, for instance, Gr\"obner bases.  Due to Theorem~%
\ref{thm:probabilictic}, condition~\eqref{eq:viaGB} is equivalent to  the
consistency condition from Step~\ref{alg1:Step4} of Algorithm~\ref%
{alg:identifiability}  under assumption~\eqref{eq:cond} on the sampled point
made in the proof of Theorem~\ref{thm:correctness} for the index $i$ of this
particular $\theta$.

\item Check it directly using regular chains, which also allows inequations
(see, e.g., \cite[Section~2.2]{Wang2001}). In practice, we observed that
this method is generally slower than~\ref{method:1} and~\ref{method:2} for
the problems that we considered.

\item \label{method:4}  Use homotopy continuation methods (for example, in
Bertini~\cite{Bertini}) as follows 
\begin{enumerate}
\item Take a square subsystem $\widehat{E^{ts}} = 0$ from the over-determined
system $\widehat{E^t} = 0$ and find all of its roots using homotopy
continuation; 

\item Let $S$ be the set of all the roots of $\widehat{E^{ts}} = 0$ that  are
also roots of $\widehat{E^t} = 0$ and are not roots of $\widehat{Q} = 0$; 

\item For each parameter $\theta \in \bftheta^\ell$, we put $%
\theta$ into $\bftheta^g$ if and only  if the $\theta$-coordinate
of every point in $S$ is equal to $\hat{\theta}$. 
\end{enumerate}

However the number of paths in problems of moderate size  (like Examples~\ref%
{ex:CRN} and~\ref{ex:Cholera}) is too large for practical computation.
\end{enumerate}
\end{remark}


\begin{example}
\label{ex:alg} We will illustrate the steps of the algorithm on a system of small size.

\begin{description}[style=nextline] 

\item[In] 

\begin{enumerate}

\item[$\Sigma$] $:=%
\begin{cases}
x^{\prime }= \mu_2 x + \mu_1, \\ 
y = x^2, \\ 
x(0) = x^\ast.%
\end{cases}%
$ 

\item[$\bs{\protect\theta}^\ell$] $:=\{ \mu_1, \mu_2, x^\ast \}$ 
\textit{(one can show that these parameters are locally identifiable)} 

\item[$p$\hfill] :$= 0.8$ 
\end{enumerate}
\end{description}

\begin{enumerate}[leftmargin=0.5cm,labelwidth=1.5em,label=\arabic*.]
\item {[Construct the maximal system $E$ of algebraic equations]}
\begin{enumerate}

\item $s \leftarrow 3$, $Q \leftarrow 1$

\emph{Since there is only one state variable and only one output variable,
from now on, we will drop the first index from $X$ and $Y$ for brevity, for
instance $X_0$ and $Y_0$ instead $X_{1,0}$ and $Y_{1,0}$, etc.} 

\item $X_{0}\leftarrow x - x^\ast$

\item $X_{1}\leftarrow x^{(1)} - \mu_2 x - \mu_1$, $X_2 \leftarrow x^{(2)} - \mu_2 x^{(1)}$, $X_3 \leftarrow x^{(3)} - \mu_2 x^{(2)}$

\item $Y_0 \leftarrow y - x^2, \ldots, Y_3 \leftarrow y^{(3)} - 2xx^{(3)} - 6x^{(1)} x^{(2)}$

\item $E\leftarrow \{ X_0, \ldots, X_3, Y_0, \ldots, Y_3\}$.
\end{enumerate}

\item {[Truncate the system $E$, obtaining $E^t$]}

\begin{enumerate}

\item $d_0 \leftarrow \deg (\mu_2 x + \mu_1) = 2$.

\item $D_{1} \leftarrow 1560$

\item $(\hat{\mu}_1, \hat{\mu}_2, \hat{x}^\ast) \leftarrow (1210, 896, 453)$

\item \label{ex:step2d} Solve the triangular system \[E\big[\bftheta\leftarrow\bs{\hat{\theta}},\bs{u}\leftarrow\hat{\bs{u}}\big]=E\big[\mu_1\leftarrow\hat\mu_1,\mu_2\leftarrow\hat\mu_2,x^*\leftarrow \hat x^*\big]\]
\begin{align*}
&y^{(3)} - 2x x^{(3)} - 6 x^{(1)}x^{(2)} = 0, & & x^{(3)} - 896 x^{(2)} =
0, \\
&y^{(2)} - 2x x^{(2)} - 2 (x^{(1)})^2 = 0, & & x^{(2)} - 896 x^{(1)} = 0, \\
&y^{(1)} - 2 x x^{(1)} = 0, & & x^{(1)} - 896 x - 1210 = 0, \\
&y - x^2 = 0, & &x - 453 = 0.
\end{align*}
iteratively: first the right column from the bottom to the top, then the
left column.  We obtain  
\begin{align*}
&\hat{x}_0 \leftarrow 453  &&  \hat{y}_0  \leftarrow 205209\\ 
&\hat{x}_1\leftarrow 407098 && \hat{y}_1 \leftarrow 368830788\\
&\hat{x}_2 \leftarrow 364759808&& \hat{y}_2 \leftarrow 661929949256 \\
&\hat{x}_3 \leftarrow 326824787968 && \hat{y}_3 \leftarrow 1187061187802112
\end{align*}

\item $\bs{\alpha} \leftarrow (1)$, $\bs{\beta} \leftarrow (0)$, $E^t \leftarrow \{
X_0 \}$.

\item Iteration 1: Since  $\bs{\alpha} = (0)$ and
\begin{equation*}
\func{Jac}_{(\mu_1, \mu_2, x^\ast, x)}(X_0, Y_0)=  
\begin{pmatrix}
0 & 0 & -1 & 1 \\ 
0 & 0 & 0 & -2x%
\end{pmatrix}
\end{equation*}
at $(\hat{\mu}_1, \hat{\mu}_2, \hat{x}^\ast, \hat{x}_0)$ has rank equal to the number or rows,  we
add $Y_0$ to $E^t$ and set $\bs{\beta} \leftarrow (1)$.  Since $Y_1$
involves $x^{(1)}$, we add $X_1$ to $E^t$ and set $\bs{\alpha} \leftarrow (2)
$.

Iteration 2: Since  
\begin{equation*}
\func{Jac}_{(\mu_1, \mu_2, x^\ast, x, x^{(1)})}(X_0, X_1, Y_0, Y_1) =  
\begin{pmatrix}
0 & 0 & -1 & 1 & 0 \\ 
-1 & -x & 0 & -\mu_2 & 1 \\ 
0 & 0 & 0 & -2x & 0 \\ 
0 & 0 & 0 & -2x^{(1)} & -2 x%
\end{pmatrix}
\end{equation*}
at $(\hat{\mu}_1, \hat{\mu}_2, \hat{x}^\ast, \hat{x}_0, \hat{x}_1)$ has 
rank equal to the number of rows,  we add $Y_1$ to $E^t$ and set $\bs{\beta} \leftarrow (2)$.  Since $Y_2
$ involves $x^{(2)}$, we add $X_2$ to $E^t$ and set $\bs{\alpha} \leftarrow
(3)$.

Carrying out similar computations, we obtain a Jacobian with rank equal to the number of rows in the
next iteration,  but obtain a Jacobian with rank less than the number or rows in the following
iteration, so we stop the while loop.  We obtain $\bs{\alpha} = (4)
$, $\bs{\beta} = (3)$,  and $E^t = \{ X_0, X_1, X_2, X_3, Y_0, Y_1,
Y_2 \}$.

\item We add $Y_3$ to $E^t$, set $\bs{\beta} \leftarrow (4)$. 
\end{enumerate}

\item {[Randomize some variables in $E^t$, obtaining $\widehat{E^t}$]}

\begin{enumerate}

\item $D_{2} \leftarrow 195840$

\item $(\hat{\mu}_1, \hat{\mu}_2, \hat{x}^\ast) \leftarrow (2440, 171852, 68794)$

\item We perform the same computation as in Step~\ref{ex:step2d}, but with
the new numbers, and thus find numerical values $\hat y_0, \hat y_1, \hat y_2, \hat y_3$.

\item $\widehat{E^t}\leftarrow E^t\left[ y \leftarrow \hat{y}_0, y^{(1)} \leftarrow 
\hat{y}_1, y^{(2)} \leftarrow \hat{y}_2, y^{(3)} \leftarrow \hat{y}_3 \right]%
,\;\;\;\;\;\;\hat{Q} \leftarrow 
1
$
\end{enumerate}

\item {[Determine $\bftheta^g$ from $\widehat{E^t}$]}

\item[] This can be done in several ways as described in Remark~\ref%
{rem:proj_comp}.

\begin{itemize}
\item Following \ref{method:1} from Remark~\ref{rem:proj_comp}, we compute 
a Gr\"obner basis for each of the following  
\begin{gather*}
\widehat{E}^t \cup \{ z - 1, (\mu_1 - 2440)w - 1 \},\;\; \widehat{E}^t
\cup \{ z - 1, (\mu_2 - 171852)w - 1 \}, \\
\widehat{E}^t \cup \{ z - 1, (x^\ast - 68794)w - 1 \}.
\end{gather*}
with respect to a monomial ordering (any choice). 
A computation shows that only the second Gr\"obner basis contains 1. 
Thus $\mu_2$  is globally identifiable, and $\mu_1$ and $x^\ast$ are not.

\item Following \ref{method:2} from Remark~\ref{rem:proj_comp}, we compute
the reduced  Gr\"obner basis of $\widehat{E}^t \cup \{ z - 1 \}$  with
respect to (we make this particular choice here just to obtain a concrete answer) the degree reverse lexicographic ordering with the ordering  $%
\mu_1 > \mu_2 > x^\ast > x > \ldots > x^{(3)} > z$ on the variables:  
\begin{align*}
&z - 1, & &171852 x^{(2)} - x^{(3)},\\
&29533109904 x^{(1)} - x^{(3)}, & & 174575955769228371456 x - 34397 x^{(3)},\\
&\mu_2 - 171852, & & 43643988942307092864 \mu_1 - 305 x^{(3)},\\ 
&(x^{(3)})^2 - 349151911538456742912^2, & & x^{\ast} - x,
\end{align*}

Observe that the Gr\"obner basis contains $\mu_2 - 171852$ but does not contain $x^\ast - 68794$ or $%
\mu_1 - 2440$. 
Thus, $\mu_2$  is globally
identifiable and $\mu_1$ and $x^\ast$ are not.
\end{itemize}
\end{enumerate}

\begin{description}[style=nextline]

\item[Out] 

\begin{enumerate}

\item[$\bs{\protect\theta}^g$]$\leftarrow\{ \mu_2 \}$ 
\end{enumerate}
\end{description}
\end{example}


\begin{theorem}
\label{thm:correctness}  Algorithm~\ref{alg:identifiability} produces 
correct output with probability at least $p$.
\end{theorem}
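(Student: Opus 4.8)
The plan is to prove that the output of Algorithm~\ref{alg:identifiability} is correct as soon as the two random samples drawn in Steps~\ref{alg1:Step2c} and~\ref{alg1:Step3b} avoid two explicit proper Zariski-closed ``bad'' subsets, and then to bound the probability of landing in these subsets by the Demillo--Lipton--Schwartz--Zippel lemma, checking that the two failure probabilities add up to at most $1-p$.

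\emph{Deterministic skeleton.} Suppose first that every Jacobian rank evaluated at the Step~\ref{alg1:Step2c} sample equals the corresponding generic rank. Proposition~\ref{prop:dominant} turns each rank test in Step~\ref{step2:f} into a dominance statement about a projection of some $Z_{\bs{h}}$. The loop in Step~\ref{step2:f} maintains the invariant $\func{rank}\func{Jac}_{\bftheta,\bs{x}_{\bs{\alpha}}}(E^t)=|E^t|$: it starts from $E^t=\{x_i-x_i^\ast\}$, whose Jacobian has full rank, and each pass adds an output prolongation $Y_{k\beta_k}$ only when the rank stays full, together with the state prolongations it forces, which add independent rows through their new leading variables. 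By Proposition~\ref{prop:dominant} this means the projection of $Z_{\bs{\beta}}$ onto the input--output coordinates is dominant, i.e.\ $\bs{\beta}$ satisfies requirement~\ref{req:1} of Theorem~\ref{thm:alg_criterion}; upon exit no $Y_{k\beta_k}$ raises the rank, so $Z_{\bs{\beta}+\bs{1}_k}$ is not dominant for any $k$, which is requirement~\ref{req:2}. Hence the value of $\bs{\beta}$ after Step~\ref{step2:f} is an admissible $\bs{h}$ for Theorem~\ref{thm:alg_criterion}, and Step~\ref{step:2g} only enlarges each coordinate, so the final $\bs{\beta}$ is an admissible $\bs{h}'$ with $\bs{h}'-\bs{h}\in\mathbb{Z}_{>0}^m$ --- here one uses $|\bs{h}|\leqslant s=|\bftheta|$ from Corollary~\ref{cor:existence} to see that $E$ already contains all the required equations. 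Applying Theorem~\ref{thm:alg_criterion} with $\bftheta^\#=\{\theta\}$ for each $\theta\in\bftheta^\ell$ reduces the global identifiability of $\theta$ to a ``generic fiber of cardinality one'' condition, and combining the equivalence of~(\ref{thm2:1}) and~(\ref{thm2:3}) in Theorem~\ref{thm:probabilictic} with Proposition~\ref{prop:subst}---which converts the hypothesis ``$P(a)\ne 0$'' on the full sampled point of $Z_{\bs{h}'}$ into ``$\widetilde P(\hat\bftheta,\hat u)\ne 0$'' on the directly sampled parameter and input coordinates---shows that, as long as the Step~\ref{alg1:Step3b} sample avoids $Z(\widetilde P_\theta)$ for the polynomial $P_\theta$ furnished by Theorem~\ref{thm:probabilictic}, the consistency test of Step~\ref{alg1:Step4} decides the global identifiability of $\theta$ correctly.

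\emph{Degree bookkeeping.} For Step~\ref{alg1:Step2c}, the loop in Step~\ref{step2:f} halts prematurely only if some maximal minor of a relevant Jacobian that is not identically zero on the irreducible variety $Z_{\bs{\beta}}$ (prime by Lemma~\ref{lem:intersection}) vanishes at the sampled point. Parametrizing $Z_{\bs{\beta}}$ by $(\hat\bftheta,\hat u)$ via the triangular solve of Step~\ref{alg1:Step2d} (legitimate since the sample satisfies $Q(\hat\bftheta,\hat u_0)\ne 0$), that minor pulls back to a rational function whose numerator, after clearing the powers of $Q$ in the denominators and multiplying once more by $Q$, is a nonzero polynomial in $(\bftheta,\bs{u})$. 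Its degree is bounded by combining: there are at most $s$ prolongation steps; each element of $E$ has degree at most $d_0+1$, and differentiation does not raise degree, so Jacobian entries have degree at most $d_0$, while the number of rows is at most $|\cS_{\bs{h}}|+1=O((n+1)s)$; and the numerators of the $\hat{\bs{x}},\hat{\bs{y}}$ returned by the solve grow according to a weight estimate like the one in the proof of Proposition~\ref{prop:subst}, contributing the factor $1+2d_0s$. This yields total degree at most $d_0\,s\,(n+1)(1+2d_0s)$, so by Demillo--Lipton--Schwartz--Zippel this bad event has probability at most $d_0s(n+1)(1+2d_0s)/D_1=(1-p)/2$. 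For Step~\ref{alg1:Step3b}, the bad event is contained in $\bigcup_{\theta\in\bftheta^\ell}Z(\widetilde P_\theta)$: by Theorem~\ref{thm:probabilictic} each $P_\theta$ has degree at most $(2+1)\deg Z_{\bs{h}'}\leqslant 3\prod_{P\in E^t}\deg P$ (a B\'ezout bound, using $|\bftheta^\#|=1$ and $Z_{\bs{h}'}\subset Z(\cS_{\bs{h}'})$), and Proposition~\ref{prop:subst} multiplies this by at most $1+2d_0\max\bs{\beta}$; summing over the $|\bftheta^\ell|$ polynomials gives total degree at most $3|\bftheta^\ell|\big(\prod_{P\in E^t}\deg P\big)(1+2d_0\max\bs{\beta})$, hence bad-event probability at most that quantity divided by $D_2$, which is $(1-p)/2$. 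Adding the two bounds yields error probability at most $1-p$, i.e.\ correctness with probability at least $p$.

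\emph{Main obstacle.} The overall argument is a reduction to Theorems~\ref{thm:alg_criterion} and~\ref{thm:probabilictic} and Propositions~\ref{prop:dominant} and~\ref{prop:subst}, but the delicate part is the degree accounting for Step~\ref{alg1:Step2c}: one must confirm that the ``generically nonzero minor'' really is nonzero as a function on $Z_{\bs{\beta}}$ itself rather than on a larger variety, then track, over the at most $s$ prolongation steps, how both the powers of $Q$ in the denominators and the numerator degrees of the solved-for $\hat{\bs{x}},\hat{\bs{y}}$ grow, and finally match the resulting estimate to $d_0s(n+1)(1+2d_0s)$. A secondary point requiring care is that the $\bs{\beta}$ produced after Step~\ref{step:2g} is a valid $\bs{h}'$: one must check that this loop increments every coordinate of $\bs{\beta}$ at least once and never exceeds the prolongation order $s$ to which $E$ was built, both consequences of $|\bs{h}|\leqslant s$ from Corollary~\ref{cor:existence}.
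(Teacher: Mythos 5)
Your proposal is correct and follows essentially the same route as the paper's proof: condition on the two sampled points avoiding two explicit bad hypersurfaces, identify the first bad event with a premature halt of the rank-test loop and the second with the vanishing of the polynomials from Theorem~\ref{thm:probabilictic}, pull both back to $(\bftheta,\bs{u})$-space via Proposition~\ref{prop:subst}, and apply the Demillo--Lipton--Schwartz--Zippel lemma so that the two failure probabilities are each $(1-p)/2$ by the choice of $D_1$ and $D_2$. The only step you leave implicit that the paper makes explicit is that a \emph{single} nonvanishing maximal minor of the Jacobian of the final system $E^t_q$ already forces full rank at every intermediate iteration (since each $E^t_i$ is contained in $E^t_q$), which is what lets one charge the whole loop to one polynomial of degree $d_0 s(n+1)(1+2d_0 s)$ rather than a union bound over iterations.
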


\begin{proof}
In our probability analysis of random vectors $\hat{\bftheta}$
and $\hat{\bs{u}}$  sampled in steps~\ref{alg1:Step2c} and~\ref%
{alg1:Step3b}, we will use the following observation.  For all polynomials $%
P, Q \in \CC[\bftheta, \bs{u}]$ such that $Q \neq 0
$ and for every sample set of vectors $(\hat{\bftheta}, \hat{%
\bs{u}})$,  the probability of $P(\hat{\bftheta}, \hat{%
\bs{u}}) \neq 0$ given that $Q(\hat{\bftheta}, \hat{%
\bs{u}}) \neq 0$ is greater than or equal to the probability of $P(%
\hat{\bftheta}, \hat{\bs{u}}) Q(\hat{\bftheta}%
, \hat{\bs{u}}) \neq 0$.

We \textbf{claim} that the value of $\bs{\beta}$ right before  Step~%
\ref{step:2g} will satisfy the requirements for $\bs{h}$ of Theorem~%
\ref{thm:alg_criterion}  with probability at least $\frac{1 + p}{2}$.  We
run Algorithm~\ref{alg:identifiability} replacing checking of the rank of
the Jacobian  in Step~\ref{step2:f} with checking the fact that $Z_{%
\bs{\beta} + \bs{1}_k}$ projects \glssymbolY{dominant}{dominantly}  to the $(%
\bs{y}_{\bs{\beta} + \bs{1}_k}, u_{\In(\bs{\beta}
+ \bs{1}_k)})$-coordinates.  Denote the number of iterations of the
while loop in Step~\ref{step2:f} by $q$.  For all $i$, $1 \leqslant i
\leqslant q$, we denote the values of $\bs{\beta}$ and $E^t$ after
the $i$-th such iteration  by $\bs{\beta}_i$ and $E^t_i$,
respectively, where $\bs{\beta}_i := (\beta_{i1}, \ldots, \beta_{im})
$.  Let $\bs{\beta}_0$ and $E^t_0$ denote the initial values.  Due
to the construction, $\bs{\beta}_q$ satisfies the requirements for  $%
\bs{h}$ of Theorem~\ref{thm:alg_criterion}.

Comparing Step~\ref{step:2f2} of Algorithm~\ref{alg:identifiability} and 
Step~\ref{not3:Step2} of Notation~\ref{not:Sh}, we see that, for every $%
\bs{\beta}$,  
\begin{equation}  \label{eq:connection_S_and_E1}
E^t  \sqcup \big\{ Y_{1, \beta_1}, \ldots, Y_{m, \beta_m}\big\} = \cS_{%
\bs{\beta} + \bs{1}}  \sqcup \big\{X_{10}, \ldots, X_{n0}%
\big\}
\end{equation}
This and Notation~\ref{not:Sh} imply that  
\begin{equation}  \label{eq:connection_S_and_E2}
\begin{aligned}
E^t = &\cS_{\bs{\beta}} \sqcup \big\{X_{10}, \ldots, X_{n0}\big\}\\ %
 &\sqcup \big\{X_{ij}\in \cS_{\bs{\beta}+\bs{1}} \:\big|\:
x_i^{(j)} \text{ does not appear in } \cS_{\bs{\beta}},\ 0 < j,\
1\leqslant i\leqslant m\big\}.
\end{aligned}
\end{equation}
Proposition~\ref{prop:dominant} together with~\eqref{eq:connection_S_and_E2}
imply that, for all $k$, $1\leqslant k \leqslant m$,  if $Z_{\bs{%
\beta} + \bs{1}_k}$ does not project dominantly  to the $(%
\bs{y}_{\bs{\beta} + \bs{1}_k}, u_{\In(\bs{\beta}
+ \bs{1}_k)})$-coordinates,  then the Jacobian condition of the while
loop in Step~\ref{step2:f} is also false.

Let $P_1$ denote a $(|E_q^t| + 1) \times (|E_q^t| + 1)$-minor of the Jacobian of $\cS_{\bs{\beta}%
_q}$  with respect to $(\bfmu,\bs{x}_{\bs{\alpha}_q})$ that is nonzero modulo $\J_{\bs{\beta}_q}$.  Decomposition~%
\eqref{eq:connection_S_and_E2} implies that there  exists a $(|E_q^t| + 1) \times (|E_q^t| + 1)$-minor in
the Jacobian of $E^t_q$ with respect to  $(\bftheta, \bs{x%
}_{\bs{\alpha}_q})$ with the determinant equal to $Q^a P_1$ for some 
$a$.  If $Q \cdot P_1$ does not vanish after the substitution  
\begin{equation}  \label{eq:subs}
\bs{x}_{\bs{\alpha}_q} \leftarrow \hat{\bs{x}}_{\bs{\alpha}_q}, \ \bs{y}_{\bs{\beta}_q} \leftarrow 
\hat{\bs{y}}_{\bs{\beta}_q},\ \bs{u} \leftarrow \hat{%
\bs{u}},\ \bftheta \leftarrow \hat{\bftheta},
\end{equation}
then $\func{Jac}_{\bftheta,\bs{x}_{\bs{\alpha}%
_q}} (E^t_q)$ has rank $|E^t_q| + 1$  at  $(\hat{\bftheta}, \hat{%
        \bs{x}}_{\bs{\alpha}_q},\hat{\bs{y}}_{\bs{%
\beta}_q})$.  Since $E^t_i \subset E^t_q$ for every $1 \leqslant i \leqslant
q$,  the corresponding Jacobian of $E^t_i$ has rank $|E^t_i| + 1$ 
at the corresponding point for $1 \leqslant i \leqslant q$. Thus, with this choice of $(\hat{%
\bftheta}, \hat{\bs{u}})$, the value of $\bs{\beta%
}$  right before Step~\ref{step:2g} will be $\bs{\beta}_q$ and,
therefore,  satisfy the requirements for $\bs{h}$ of Theorem~\ref%
{thm:alg_criterion}.

We will bound the probability of non-vanishing of $Q P_1$ after substitution~\eqref{eq:subs}. Let $\bs{\beta}_q = (\beta_{q1},\ldots,\beta_{qm})$.  Corollary~\ref{cor:existence} implies that $\beta_{q1} + \ldots +
\beta_{qm} \leqslant s$,  so $|\cS_{\bs{\beta}_q}| \leqslant s + ns$.  Hence, 
\begin{equation*}
\deg P_1 \leqslant s(n + 1) d_0.
\end{equation*}
Applying Proposition~\ref{prop:subst} to $P_1$ with this degree bound, we obtain
$\widetilde{P}_1 \in \CC[\bftheta, \bs{u}]$ such that,
\begin{itemize}
\item $\deg \widetilde{P}_1\leqslant d_0 s(n + 1)(1 + d_0 (2s - 1))$ and 

\item for all $\tilde{\bftheta}$ and $\tilde{\bs{u}}$,  $%
\widetilde{P}_1\big(\tilde{\bftheta}, \tilde{\bs{u}}\big)%
\ne 0$ implies that $P_1$ does not vanish at $\pi_{ip}^{-1}(\tilde{%
\bftheta}, \tilde{\bs{u}}) \cap Z_{\bs{\beta}_q}$.
\end{itemize}
Since the coordinates of $(\hat{%
\bftheta}, \hat{\bs{u}})$ are sampled from $1$ to $D_1$, 
the Demillo-Lipton-Schwartz-Zippel lemma (see~\cite[Proposition~98]{Zippel})
implies that $Q(\hat{\bftheta}, \hat{\bs{u}}) \widetilde{P%
}_1(\hat{\bftheta}, \hat{\bs{u}})\ne 0$  with probability
at least  
\begin{multline*}
1 - \frac{d_0 s(n + 1)(1 + d_0 (2s - 1)) + d_0}{D_1}  =1 - \frac{D_1(1-p)-d_0^2 s(n + 1) + d_0}{2D_1} \\
\geqslant 1-\frac{1-p}{2}= \frac{1 + p}{2%
}.  
\end{multline*}
The \textbf{claim} is proved.

The values of $\bs{\alpha}$, $\bs{\beta}$, and $E^t$ right
after  Step~\ref{alg1:Step2} of Algorithm~\ref{alg:identifiability} 
have the following properties:
\begin{enumerate}[(a)]
  \item\label{it:propa} $\bs{\alpha} = \bs{\alpha}_q$;
  \item\label{it:propb} $\bs{\beta} - \bs{\beta}_q \in \mathbb{Z}_{> 0}^m$;
  \item\label{it:propc} $E^t \supset E^t_q \cup \{Y_{1\beta_{q1}}, \ldots, Y_{m\beta_{qm}}\}$.
\end{enumerate}
By~\eqref{eq:connection_S_and_E1} applied to $\bs{\beta}_q$ and by~\ref{it:propc}, after Step~\ref%
{alg1:Step2}, we have 
\[
E^t \supset \cS_{\bs{\beta}_{q}+\bs{1}}  \sqcup \big\{X_{10},\ldots, X_{n0}\big\}.
\]
By~\ref{it:propa},  $\cS_{\bs{\beta}}$ and $\cS_{\bs{\beta}_q + \bs{1}}$ contain the same polynomials of the form $X_{i, j}$.
Hence, by the construction of  $\cS_{\bs{\beta}}$ (see Step~\ref{not3:Step2} of Notation~\ref{not:Sh}) and Step~\ref{step:2g} of Algorithm~\ref{alg:identifiability}, 
\begin{equation}  \label{eq:connectionS_E3}
E^t = \cS_{\bs{\beta}}  \sqcup \big\{X_{10},
\ldots, X_{n0}\big\}.
    \end{equation}
Consider $i$, $1 \leqslant i \leqslant |\bftheta^\ell|$. Let $P_{2,i}$
be a polynomial whose existence is proven in Theorem~\ref{thm:probabilictic}
applied to $\bftheta^\# = (\theta_i^\ell)$, $\bs{h}'=\bs{\beta}$, and $\bs{h}=\bs{\beta}_q$ (see~\ref{it:propb}).  Consider $a :=
\left( \hat{\bfmu}, \hat{\bs{x}}_{\bs{\alpha}}, 
\hat{\bs{y}}_{\bs{\beta}}, \hat{%
\bs{u}}\right)$. Then~\eqref{eq:connectionS_E3} implies that the
projection of the Zariski closure of the zero set of  $\widehat{E^{t}} = 0\
\&\ \widehat{Q} \neq 0$ to the $(\bfmu, \bs{x}_{\bs{\alpha}}, \bs{y}_{\bs{\beta}%
}, \bs{u})$-coordinates is the zero set  of the ideal  
\begin{equation*}
\J_{\bs{\beta}} + I(\pi_{io}(a))\cdot\CC[%
        \bfmu, \bs{x}_{\bs{\alpha}}, \bs{y}_{%
\bs{\beta}}, \bs{u}].  
\end{equation*}
Hence, $\theta_i^{\ell}$ will be added to $\bftheta^g$ if and
only if  
\begin{equation}  \label{eq:step4}
Z\left( \left( \J_{\bs{\beta}} +
I(\pi_{io}(a))\cdot\CC[\bfmu, \bs{x}_{\bs{\alpha}}, \bs{y}_{\bs{\beta}}, 
\bs{u}] \right) \cap \CC[\theta_i^\ell] \right)=\big\{ \hat{%
\theta}^\ell_i \big\}.
\end{equation}
The choice of $P_{2, i}$ implies that~\eqref{eq:step4} is equivalent to the
fact that  $\theta_i^\ell$ is globally identifiable under the assumption
that $P_{2, i}(a) \neq 0$.  Thus, if  
\begin{equation}  \label{eq:cond}
P_{2, i}(a) \neq 0,
\end{equation}
then the decision of Algorithm~\ref{alg:identifiability}  regarding global
identifiability of $\theta_i^\ell$ will be correct.

Let $P_2 := \prod\limits_{i = 1}^ {|\bftheta^\ell|} P_{2, i}$.
Then $P_2(a) \neq 0$ implies that  the output of Algorithm~\ref%
{alg:identifiability} is correct.  Proposition~\ref{prop:subst} applied to$%
\prod\limits_{i = 1}^{ |\bftheta^\ell|} P_{2, i}$  provides a
polynomial $\widetilde{P}_2 \in \CC[\bftheta, \bs{u%
}]$ such that,  for all $\tilde{\bftheta}$ and $\tilde{%
\bs{u}}$,  $\widetilde{P}_2\big(\tilde{\bftheta}, \tilde{%
\bs{u}}\big)\ne 0$ implies that $P_2$ does not vanish at $%
\pi_{ip}^{-1}(\tilde{\bftheta}, \tilde{\bs{u}}) \cap Z_{%
\bs{\beta}}$.  Using the B{\'e}zout bound and the degree bound
from Theorem~\ref{thm:probabilictic}, we obtain  
\begin{equation*}
\deg Q\widetilde{P}_2 \leqslant 3\big|\bftheta^{\ell}\big| \deg
Z_{\bs{\beta}}\cdot (1 + 2d_0 \max \bs{%
\beta}) \leqslant  3\big|\bftheta^{\ell}\big|\left(
\prod\limits_{P \in E^t} \deg P \right)(1 + 2d_0 \max \bs{\beta}).  
\end{equation*}
Since the coordinates of $(\hat{\bftheta}, \hat{\bs{u}})$
are sampled from $1$ to $D_2$,  the Demillo-Lipton-Schwartz-Zippel lemma
(see~\cite[Proposition~98]{Zippel}) implies that the point $(\hat{%
\bftheta}, \hat{\bs{u}})$  is not a zero of $Q\widetilde{P%
}_2$ with probability at least  
\begin{equation*}
1 - \frac{\deg Q\widetilde{P}_2}{D_2}\geqslant 1- \frac{D_2(1-p)}{2D_2}=  \frac{1 + p}{2}  
\end{equation*}
Thus, with probability at least \[1 - \left(\left(1 - \frac{1 + p}{2}\right) + \left(1 - \frac{1 + p}{2}\right)\right) = 2 \cdot \frac{1 + p}{2} - 1 = p,\] the
output of Algorithm~\ref{alg:identifiability} is correct.
\end{proof}


\section{Performance}
\label{sec:examples} 

In this section, we discuss
the performance of an implementation of
Algorithm~\ref{alg:identifiability} using both basic and challenging examples taken from the
literature and also discuss how several other existing software packages perform at these examples.
Briefly, Table~\ref{table:runtimes} below shows that our running time compares favorably to existing software for several
challenging problems taken
from the literature.  
More details may be found in our
companion paper \cite{SIAN}.

We begin by giving a brief descriptions of the implementation of Algorithm~\ref{alg:identifiability} and three other software packages that are available to us.
\begin{description}
\descitem{Algorithm 1}
The current implementation of  Algorithm~\ref{alg:identifiability} is done on the computer algebra system \textsc{Maple}~$2017$. It takes advantage of parallel computing
because
(1) the built-in \textsc{Maple}
function for computing Gr\"obner bases is parallelized at the thread level
and 
(2) while using \ref{method:1} for Step~\ref{alg1:Step4}, we
naturally compute each Gr\"obner basis in a separate process. 
The implementation and examples used in this paper are contained in SIAN v0.5 (available at \url{https://github.com/pogudingleb/SIAN/releases/tag/v0.5}).
Note that the algorithm for Gr\"obner basis computation in \textsc{Maple} is also Monte Carlo with the probability of error at most $10^{-18}$, so this probability should be subtracted from the probability of success of our implementation.

\descitem{DAISY} This is software written in \textsc{Reduce}~\cite{DAISY}.
 For comparison, we used version~$1.9$.
 DAISY takes as an input a positive integer $\func{SEED}$, 
 which is used for sampling random points. 
 We used the default value $35$.
 In our experiments, the software did not appear
 to compute in parallel.  Since the core of the algorithm is a computation of
 a characteristic set decomposition  of a radical differential ideal, it is
 not clear how the algorithm could be efficiently  parallelized.
 
 \descitem{COMBOS} This is a web-based application~\cite{COMBOS}.
 For some of the examples below we received an error message saying ``Model may have been entered incorrectly or cannot be solved with COMBOS algorithms''.
 We will denote such cases by $\ast\ast$ in Table~\ref{table:runtimes}.
 
 \descitem{GenSSI 2.0} This is a package written in \textsc{Matlab}~\cite{LFCBBCH}.
 The algorithm uses \textsc{solve} function from \textsc{Matlab}  to solve
 systems of algebraic equations symbolically.
 For large examples (such as, for example, Examples~\ref{ex:CRN} and~\ref{ex:Cholera}), 
 such a symbolic solution does not exists.
 In such cases, the \textsc{solve} function returns an empty set of solutions together
 with a warning ``Warning: Unable to find explicit solution.'' 
 This means that there might be solutions but they could not be found by \textsc{Matlab}.
The algorithm is unable to conclude whether the parameters
        are globally identifiable due to this \textsc{Matlab} failure
  (see Examples~\ref{ex:CRN} and~\ref{ex:Cholera}).
 We will denote such cases by $\ast$ in Table~\ref{table:runtimes}.
\end{description}

We ran the program on a computer with $96$ CPUs, $2.4$ GHz each, under a Linux
operating system (CentOS 6.9). The runtime is the elapsed time. The sequential time is
the total CPU time spent by all threads of all  processes during the
computation. The former is measured as the \textsc{real} part of the output
of the Linux \textsc{time} command. The latter is  estimated as the sum of the \textsc{%
user} and \textsc{sys} parts of the output of the \textsc{time} command.
We report the runtimes in Table~\ref{table:runtimes} and the sequentual times in the text afterwards.

We determine locally identifiable parameters using software from~\cite{Sed2002}.
In all our examples, it took less than~$5$ seconds; this is negligible compared to
the other timings.

 We ran  Algorithm~\ref{alg:identifiability} 
and DAISY on \emph{all} examples from~\cite{DAISY,DAISY_MED,DAISY_IFAC,DAISY2010}.
The runtimes of both programs 
were below 1 minute.\footnote{
The example built
in~\cite[Section~6]{DAISY2010}   
looks challenging 
since it involves 
42
state variables and parameters. However, it is actually 
not challenging since it can be   straightforwardly divided into several 
non-challenging
problems: one can first analyze the identifiability of the parameters that appear in the
first equation (using only  
this
first equation)  and then add the other equations one-by-one to analyze the remaining parameters. 
The corresponding computation for the first equation takes less than 1 minute for
both programs, and the computations for the other 
equations can be done after that even by hand.}
Thus, from now on, we will elaborate on several (four) 
more challenging
problems taken from  literature
~\cite{NFkB_further,comparison,ConradiShiu,Pharm,Cholera,NFkB_first}
and 
compare the performance of Algorithm~\ref{alg:identifiability} and
the three other  software packages (DAISY, COMBOS, GenSSI 2.0) on them.

Table~\ref{table:runtimes} summarizes the  runtimes.
In the table, our timing is the best of 
timings obtained by performing Step~\ref{alg1:Step4} of Algorithm~\ref{alg:identifiability}
by~\ref{method:1} and~\ref{method:2} from Remark~\ref{rem:proj_comp}.

\begin{table}[h!]
\caption{Runtimes (in minutes)  on challenging problems}\label{table:runtimes}
\begin{center}
\begin{tabular}{|l|r|r|r|r|}
\hline
  Example & \descref{GenSSI 2.0} & \descref{COMBOS}  & \descref{DAISY}  &Algorithm~\ref{alg:identifiability} \\
\hline
  Example~\ref{ex:CRN}  &$\ast$ & $\ast\ast$ & $>$ \hfill $6{,}000$ & $< \hspace{1mm}1$\\
  \hline
  Example~\ref{ex:Cholera} &$\ast$ & $85\;$ & $30$  & $3$\\
  \hline
  Example~\ref{ex:NFkB} & $>$ \hspace{1mm} $12{,}000$ &$\ast\ast$ & $>$ \hfill $6{,}600$ & $48$\\
\hline
  Example~\ref{ex:PharmAeq} & $>$ \hspace{1mm} $12{,}000$ &$\ast\ast$  & $>$ \hfill $7{,}800$ & $993$\\
\hline
\end{tabular}
\end{center}
\small
$\ast$: \;\;\;GenSSI 2.0 returns~``{\it Warning: Unable to find explicit solution.}''\\
$\ast\ast$:\; COMBOS \ returns~``{\it Model may have been entered incorrectly or cannot be solved with COMBOS algorithms.}''
\end{table}

\begin{example}
\label{ex:CRN}  The following system of ODEs corresponds to a chemical
reaction network~\cite[Eq.~3.4]{ConradiShiu},  which is a reduced fully
processive, $n$-site phosphorylation network.  
\begin{align*}
\begin{cases}
\dot x_1 & = -\mu_1 x_1 x_2 + \mu_2 x_4 + \mu_4 x_6, \\ 
\dot x_2 & = -\mu_1 x_1 x_2 + \mu_2 x_4 + \mu_3 x_4, \\ 
\dot x_3 & = \mu_3 x_4 + \mu_5 x_6 - \mu_6 x_3 x_5, \\ 
\dot x_4 & = \mu_1 x_1 x_2 - \mu_2 x_4 - \mu_3 x_4, \\ 
\dot x_5 & = \mu_4 x_6 + \mu_5 x_6 - \mu_6 x_3 x_5, \\ 
\dot x_6 & = -\mu_4 x_6 - \mu_5 x_6 + \mu_6 x_3 x_5%
\end{cases}%
\end{align*}
Setting the outputs $y_1 = x_2$ and $y_2 = x_3$, we obtain a system of the form~%
\eqref{eq:main}.  We run Algorithm~\ref{alg:identifiability} setting $%
\bftheta^\ell := \{\mu_1, \ldots, \mu_6, x_1^\ast,
\ldots, x_6^\ast\}$ (a calculation shows that these parameters are locally
identifiable).  The intermediate results are the following:
\begin{itemize}
\item the system $E$ consists of~$104$ equations in~$116$ variables; 

\item $D_1 = 4{,}204{,}800$; 

\item $\bs{\beta} = (7, 7)$, $\bs{\alpha} = (6, 7, 7, 6, 6,
6)$; 

\item $D_2 = 4{,}936{,}445{,}783\cdot 10^{11}$; 

\item the system $\widehat{E}^t$ consists of $52$ equations in $50$ variables. 
\end{itemize}
The algorithm returns that all the parameters are globally identifiable with
probability at least $99\%$.  If we perform the last step of Algorithm~\ref%
{alg:identifiability} using \ref{method:1} from  Remark~\ref{rem:proj_comp},
the runtime is $0.4$ minutes (the  estimated sequential time is $0.9$ minutes).  If we
use \ref{method:2}, the runtime is $0.4$ minutes (the  estimated sequential time is $0.9$
minutes).  The timings for both methods are similar, and the improvement by
computing in parallel 
is not that  significant because the Gr\"obner bases computations are
relatively easy in this case compared to  the other steps of the algorithm.
\begin{itemize}
  \item \descref{DAISY} did not output any result in $100$ hours.
  \item \descref{COMBOS} returned ``Model may have been entered incorrectly or cannot be solved with COMBOS algorithms''.
  \item \descref{GenSSI 2.0} constructed a polynomial system that could not be
solved symbolically by \textsc{Matlab}.
\textsc{Matlab} returned ``Warning: Unable to find explicit solution.'' and an empty set of solutions.
As a result, the algorithm reports that
$x_2^\ast$ and $x_3^\ast$ are globally identifiable
and the other parameters are  locally identifiable.
It was not able to determine
whether these other parameters are globally identifiable.
\end{itemize}
\end{example}

\begin{example}
\label{ex:Cholera}  The following version of SIWR is an extension of the SIR
model, see~\cite[Eq.~3]{Cholera}:  
\[
\begin{cases}
\dot{s} & = \mu - \beta_I s i - \beta_W s w - \mu s + \alpha r, \\ 
\dot{i} & = \beta_W s w + \beta_I s i - \gamma i - \mu i, \\ 
\dot{w} & = \xi (i - w), \\ 
\dot{r} & = \gamma i - \mu r - \alpha r%
\end{cases}%
\]
where $s$, $i$, and $r$ stand for the fractions of the population that are
susceptible, infectious, and recovered, respectively.  The variable $w$
represents the concentration of the bacteria in the environment. The scalars 
$\alpha, \beta_I, \beta_W, \gamma, \mu, \xi$ are unknown parameters. 
Following~\cite{Cholera}, we assume that we can observe $y_1 = \kappa_1 i$,
where $\kappa_1$ is one more unknown parameter.  We will also assume that
one can measure the total population $s + i + r$, so $y_2 = s + i + r$.  We
run Algorithm~\ref{alg:identifiability} setting $\bftheta^\ell :=
\{\alpha, \beta_I, \beta_W, \gamma, \mu, \xi, \kappa_1, s^\ast, i^\ast,
w^\ast, r^\ast\}$ (it was show in~\cite{Cholera} that they are locally
identifiable).  The intermediate results are the following: 
\begin{itemize}
\item the system $E$ consists of~$66$ equations in~$77$ variables; 

\item $D_1 = 2{,}653{,}200$; 

\item $\bs{\beta} = (10, 8)$, $\bs{\alpha} = (9, 10, 9, 8)$; 

\item $D_2 = 1{,}744{,}556{,}312\cdot 10^{12}$; 

\item the system $\widehat{E}^t$ consists of~$54$ equations in~$47$ variables. 
\end{itemize}
The algorithm   returns
 that all parameters are globally identifiable with
probability at least $99\%$.  If we perform the last step of Algorithm~\ref%
{alg:identifiability} using \ref{method:1} from  Remark~\ref{rem:proj_comp},
the runtime is $3$ minutes (the  estimated sequential time is $77$ minutes).  If we
use \ref{method:2}, the runtime is $15$ minutes (the  estimated sequential time is $%
114$ minutes).  Unlike in Example~\ref{ex:CRN}, in this case, the Gr\"obner
bases computations are the most  time-consuming part of the algorithm. 
Because of this, the parallelization of the Gr\"obner bases computations in 
\textsc{Maple}  gives almost $8$~times speed up while using \ref{method:2}. 
Combined with performing different Gr\"obner bases computations in parallel 
while using \ref{method:1}, it gives an almost $25$~times speed up.
\begin{itemize}
  \item \descref{DAISY} took~$30$ minutes to output the correct result.
  \item \descref{COMBOS} took~$85$ minutes to output the correct result.
  \item \descref{GenSSI 2.0} constructed a polynomial system that could not be
  solved symbolically by \textsc{Matlab}.
  \textsc{Matlab} returned ``Warning: Explicit solution could not be found'' and an empty set of solutions.
   Because of this, the algorithm was not able to determine if the parameters are globally identifiable.
\end{itemize}
\end{example}

\begin{example}\label{ex:NFkB}
  Consider the model of NF$\kappa$B regulatory module proposed in~\cite{NFkB_first}
  (see also~\cite{NFkB_further} and~\cite[Case~6]{comparison}) defined by 
  the following system~\cite[Equation~27]{comparison}
  \begin{equation}
  \begin{cases}
    \dot x_1 = k_{prod} - k_{deg} x_1 - k_1 x_1  u,\\
    \dot x_2 = -k_3 x_2 - k_{deg} x_2 - a_2 x_2 x_{10} + t_1 x_4 - a_3 x_2 x_{13} + t_2 x_5 + (k_1 x_1 - k_2 x_2 x_8) u,\\
    \dot x_3 = k_3 x_2 - k_{deg} x_3 + k_2 x_2 x_8 u,\\
    \dot x_4 = a_2 x_2 x_{10} - t_1 x_4,\\
    \dot x_5 = a_3 x_2 x_{13} - t_2 x_5,\\
    \dot x_6 = c_{6a} x_{13} - a_1 x_6 x_{10} + t_2 x_5 - i_1 x_6,\\
    \dot x_7 = i_1 k_v x_6 - a_1 x_{11} x_7,\\
    \dot x_8 = c_4 x_9 - c_5 x_8,\\
    \dot x_9 = c_2 + c_1 x_7 - c_3 x_9,\\
    \dot x_{10} = -a_2 x_2  x_{10} - a_1 x_{10} x_6 + c_{4a} x_{12} - c_{5a} x_{10} - i_{1a} x_{10} + e_{1a} x_{11},\\
    \dot x_{11} = -a_1 x_{11} x_7 + i_{1a} k_v x_{10} - e_{1a} k_v x_{11},\\
    \dot x_{12} = c_{2a} + c_{1a} x_7 - c_{3a} x_{12},\\
    \dot x_{13} = a_1 x_{10} x_6 - c_{6a} x_{13} - a_3 x_2 x_{13} + e_{2a} x_{14},\\
    \dot x_{14} = a_1 x_{11} x_7 - e_{2a} k_v x_{14},\\
    \dot x_{15} = c_{2c} + c_{1c} x_7 - c_{3c} x_{15}
  \end{cases}  
  \end{equation}
  In the above system,
\glssymbolY{input}{$u$}
    is the input function, $x_1, \ldots, x_{15}$ are the
    state variables, and the rest
  are scalar parameters.
  The outputs are
  $y_1 = x_2$, $y_2 = x_{10} + x_{13}$, $y_3 = x_9$, $y_4 = x_1 + x_2 + x_3$, $y_5 = x_7$, and $y_6 = x_{12}$.
  The values of some of the parameters are known from the existing literature (see~\cite[Table~1]{NFkB_further}), so we run Algorithm~\ref{alg:identifiability} with
  \[
    \bftheta^\ell = (t_1, t_2, c_{3a}, c_{4a}, c_5, k_1, k_2, k_3, k_{prod}, k_{deg}, i_1, e_{2a}, i_{1a}, x_1^\ast, \ldots, x_{14}^\ast).  
  \]
  The intermediate results are the following:
  \begin{itemize}
    \item the system $E$ consists of~$588$ equations in~$623$ variables; 

    \item $D_1 = 80{,}640{,}000$; 

    \item $\bs{\beta} = (8, 7, 6, 7, 6, 6)$, $\bs{\alpha} = (7, 8, 7, 7, 7, 6, 6, 7, 6, 7, 6, 6, 7, 6, 1)$; 

    \item $D_2 = 1{,}856{,}032{,}379 \cdot 10^{24}$; 

    \item the system $\widehat{E}^t$ consists of~$134$ equations in~$121$
    variables. 
  \end{itemize}
  The algorithm
 returns
  that all  parameters are globally identifiable with
  probability at least $99\%$.  If we perform the last step of Algorithm~\ref%
  {alg:identifiability} using \ref{method:1} from  Remark~\ref{rem:proj_comp},
  the runtime is $48$ minutes (the  estimated sequential time is  926 minutes).  
  If we use \ref{method:2}, the runtime is 
  $190$~minutes (the  estimated sequential time is $1{,}830$~minutes).
  \begin{itemize}
  \item \descref{DAISY} did not output any result in $110$ hours.
  \item \descref{COMBOS} returned ``Model may have been entered incorrectly or cannot be solved with COMBOS algorithms''.
  \item \descref{GenSSI 2.0} did not output any result in $200$ hours.
\end{itemize}
 A version of this
   problem was solved by GenSSI-like method in~\cite{NFkB_further} under 
  an additional assumption on the initial conditions~\cite[p.~9]{NFkB_further}, which we do not make.

\end{example}


\begin{example}\label{ex:PharmAeq}
  Consider the following model arising in pharmacokinetics~\cite{Pharm}:
  \begin{equation}\label{eq:Pharm}
  \begin{cases}
    \dot x_1 = a_1 (x_2 - x_1) - \frac{k_a V_m x_1}{k_c k_a + k_c x_3 + k_a x_1},\\
    \dot x_2 = a_2(x_1 - x_2),\\
    \dot x_3 = b_1(x_4 - x_3) - \frac{k_cV_m x_3}{k_c k_a + k_c x_3 + k_a x_1},\\
    \dot x_4 = b_2(x_3 - x_4).
  \end{cases}  
  \end{equation}
  The only output is $y = x_1$.  
  Despite 
 having
  moderate size, the global identifiability of~\eqref{eq:Pharm} is
  out of reach for all software tools we are aware of, see also~\cite[Case~2]{comparison}.
  We consider a simplified version of~\eqref{eq:Pharm} with the additional assumption $a_1 = a_2$.
  We run Algorithm~\ref{alg:identifiability} with
  \[
    \bftheta^\ell = \{ a_1, b_1, b_2, k_a, k_c, V_m, x_1^\ast, \ldots, x_4^\ast\}.  
  \]
  The intermediate results are the following:
  \begin{itemize}
    \item the system $E$ consists of~$55$ equations in~$65$ variables; 

    \item $D_1 = 3{,}240{,}000$; 

    \item $\bs{\beta} = (11)$, $\bs{\alpha} = (11, 10, 10, 9)$; 

    \item $D_2 = 1{,}923{,}937{,}761 \cdot 10^{13}$; 

    \item the system $\widehat{E}^t$ consists of~$51$ equations in~$50$ variables. 
  \end{itemize}
  The algorithm returns that all parameters are globally identifiable with
  probability at least $99\%$.  If we perform the last step of Algorithm~\ref%
  {alg:identifiability} using \ref{method:1} from  Remark~\ref{rem:proj_comp},
  the runtime is $993$ minutes (the  estimated sequential time is  11,944 minutes).  
  If we use \ref{method:2}, the runtime is 
  $6{,}042$ minutes (the  estimated sequential time is $112{,}000$ minutes).
  \begin{itemize}
    \item \descref{DAISY} did not output any result in $130$ hours.
    \item \descref{COMBOS} returned ``Model may have been entered incorrectly or cannot be solved with COMBOS algorithms''.
    \item \descref{GenSSI 2.0} did not output any result in $200$ hours.
  \end{itemize}

\end{example}

        





 \ack 
      This work was partially supported by the NSF grants CCF-0952591,
        CCF-1563942, CCF-1564132, CCF-1319632, DMS-1606334, DMS-1760448, DMS-1853650, by the NSA grant
        \#H98230-15-1-0245, by CUNY CIRG \#2248, by PSC-CUNY grant \#69827-00
        47, by the Austrian Science Fund FWF grant Y464-N18. We
         are
        grateful to
        the CCiS at CUNY Queens College for the computational resources and to the  referees, Julio Banga, Dan Bates, Gr\'egoire Lecerf, Thomas Ligon, David Marker, Nikki Meshkat, Amaury Pouly, Maria Pia Saccomani, Anne Shiu, and Seth Sullivant for useful discussions and feedback.


\frenchspacing
\bibliographystyle{cpam}
\bibliography{bibdata}

\section*{Erratum for Lemma~\ref{lem:nonzero_poly}}

We are grateful to Peter Thompson for pointing out an error in the proof of~Lemma~\ref{lem:nonzero_poly}. 
The original proof worked only under the assumption that $\hat{\theta}$ is a vector of constants.
However, some of the components of $\hat{\bs{\theta}}$ could be the states of the dynamic under consideration, and it was important in Proposition~\ref{prop:equiv_fields} where the lemma was used.

We give a more explicit version of the statement and provide a correct proof.
The desired statement will be deduced from the following:

\begin{lemma}\label{lem:main}
  Consider a system of differential equations 
  \begin{equation}\label{eq:main}
    \begin{cases}
      x_1' = f_1(\bs{x}, \bs{\mu}, \bs{u}),\\
      \vdots\\
      x_n' = f_n(\bs{x}, \bs{\mu}, \bs{u}),
    \end{cases}
  \end{equation}
  where $\bs{x} = (x_1, \ldots, x_n)$ and $\bs{u} = (u_1, \ldots, u_m)$ are tuples of differential indeterminates, $\bs{\mu} = (\mu_1, \ldots, \mu_\lambda)$ are scalar parameters, and $f_1, \ldots, f_n \in \mathbb{C}(\bs{x}, \bs{\mu}, \bs{u})$.
  Let $Q(\bs{x}, \bs{\mu}, \bs{u}) \in \mathbb{C}[\bs{x}, \bs{\mu}, \bs{u}]$ be the LCM of the denominators of $f_1, \ldots, f_n$.
  Let $P \in \mathbb{C}[\bs{x}, \bs{\mu}]\{ \bs{u}\}$ be a nonzero differential polynomial.
  Then there exist nonzero $P_1 \in \mathbb{C}[\bs{x}, {\color{teal}\bs{\mu}}]$ and $P_2 \in \mathbb{C}\{\bs{u}\}$ such that, for every tuple $\hat{\bs{\mu}} \in \mathbb{C}^\lambda$ and every power series solution $(\hat{\bs{x}}, \hat{\bs{u}})$  of~\eqref{eq:main} with parameters $\hat{\bs{\mu}}$ in $\CC[\![t]\!]$ such that
  \[
    Q(\hat{\bs{x}}, \hat{\bs{\mu}}, \hat{\bs{u}})|_{t = 0} \neq 0
  \]
  we have
  \[
    \left(P_1(\hat{\bs{x}}, \hat{\bs{\mu}})|_{t = 0} \neq 0 \;\&\; P_2(\hat{\bs{u}})|_{t = 0} \neq 0\right) \implies P(\hat{\bs{x}}, {\color{teal}\hat{\bs{\mu}},} \hat{\bs{u}}) \neq 0.
  \]
\end{lemma}

\begin{proof}
  Consider the following differential ideal
  \[
    I := \langle (Qx_i' - Qf_i)^{(j)}, P^{(j)} \mid 1 \leqslant i \leqslant n,\; j \geqslant 0 \rangle \colon Q^\infty \subset \mathbb{C}[\bs{\mu}]\{\bs{x}, \bs{u}\}.
  \]
  We claim that $I$ contains a nonzero polynomial of the form $P_1P_2$ such that $P_1 \in \mathbb{C}[\bs{x}, \bs{\mu}]$ and $P_2 \in \mathbb{C}\{\bs{u}\}$.
  First we will show that, if the claim is true, then $P_1$ and $P_2$ satisfy the condition of the lemma.
  Assume the contrary, that there is a power series solution $(\hat{\bs{x}}, \hat{\bs{u}})$ of~\eqref{eq:main} with parameters $\hat{\bs{\mu}}$ such that the constant term of $Q(\hat{\bs{x}}, \hat{\bs{\mu}}, \hat{\bs{u}}) P_1(\hat{\bs{x}}, \hat{\bs{\mu}}) P_2(\hat{\bs{u}})$ is nonzero but $P(\hat{\bs{x}}, \hat{\bs{\mu}}, \hat{\bs{u}}) = 0$.
  Since $(\hat{\bs{x}}, \hat{\bs{\mu}}, \hat{\bs{u}})$ is a zero of differential polynomials $P$ and $Qx_i' - Qf_i$ for every $1 \leqslant i \leqslant n$, it is a zero of the ideal 
  \[
  \langle (Qx_i' - Qf_i)^{(j)}, P^{(j)} \mid 1 \leqslant i \leqslant n,\; j \geqslant 0 \rangle.
  \]
  Since $Q(\hat{\bs{x}}, \hat{\bs{\mu}}, \hat{\bs{u}})|_{t = 0} \neq 0$, every element in $I$ which is the saturation of above ideal at $Q$ also vanishes on $(\hat{\bs{x}}, \hat{\bs{\mu}}, \hat{\bs{u}})$.
  In particular, $P_1P_2$ vanishes on $(\hat{\bs{x}}, \hat{\bs{\mu}}, \hat{\bs{u}})$ , so we arrive at the contradiction with $P_1(\hat{\bs{x}}, \hat{\bs{\mu}}) P_2(\hat{\bs{u}}) \neq 0$.
  
  Now we will prove the claim. 
  Consider the ring $R := \mathbb{C}[\bs{x}, \bs{\mu}]\{\bs{u}\}[1/Q]$.
  Let $J$ be the ideal generated by $I \cap \mathbb{C}[\bs{x}, \bs{\mu}]\{\bs{u}\}$ in $R$.
  The definition of $I$ via the saturation at $Q$ implies that 
  \[
    J \cap  \mathbb{C}[\bs{x}, \bs{\mu}]\{ \bs{u}\} = I \cap \mathbb{C}[\bs{x}, \bs{\mu}]\{ \bs{u}\}.
  \]
  Thus, it is sufficient to prove that there is an element of the form $P_1 P_2$ with $P_1 \in \mathbb{C}[\bs{x}, \bs{\mu}]$ and $P_2 \in \mathbb{C}\{\bs{u}\}$ in $J$.
  We define a derivation $\mathcal{L}$ on $R$ (which is basically the Lie derivative) by
  \[
      \mathcal{L}(g) := \sum\limits_{i = 1}^n f_i\frac{\partial g}{\partial x_i} + \sum\limits_{\ell = 1}^m \sum\limits_{j = 0}^\infty u_\ell^{(j + 1)} \frac{\partial g}{\partial u_\ell^{(j)}}\quad \text{ for } g\in R.
  \]
  Since $Qx_1' - Qf_1, \ldots, Qx_n' - Qf_n \in I$ and $I$ is a differential ideal, $J$ is invariant under $\mathcal{L}$.
  
  Let $\widetilde{R}$ be the localization of $R$ with respect to $\mathbb{C}\{\bs{u}\}$ and $\widetilde{J}$ be the ideal generated by $J$ in this localization.
  The derivation $\mathcal{L}$ can be naturally extended to $\widetilde{R}$, and $\widetilde{J}$ is also $\mathcal{L}$-invariant.
  It is sufficient to prove that $\widetilde{J}\cap \mathbb{C}[\bs{x}, \bs{\mu}] \neq \{0\}$.
  Consider a nonzero element of $\widetilde{J} \cap \mathbb{C}[\bs{x}, \bs{\mu}]\{\bs{u}\}$ with the smallest number of monomials and, among such elements, an element of the smallest total degree. 
  We will call it $S$.
  If $S\in \mathbb{C}[\bs{x}, \bs{\mu}]$, we are done.
  Otherwise, one of $\bs{u}$ appears in $S$, say $u_1$.
  Let $h = \ord_{u_1}S$.
  
  Since $\widetilde{R}$ is a Noetherian ring, there exists $N > 0$ such that 
  \[
    \mathcal{L}^N(S) \in \langle S, \mathcal{L}(S), \ldots, \mathcal{L}^{N - 1}(S) \rangle. 
  \]
  We have $\ord_{u_1} \mathcal{L}^i(S) < N + h$ for $i < N$ and
  \[
    \mathcal{L}^N(S) = \frac{\partial S}{\partial u_1^{(h)}} u_1^{(h + N)} + T, \quad \text{ where } \ord_{u_1}T < N + h.
  \]
  Therefore, we have
  \[
    \frac{\partial S}{\partial u_1^{(h)}} \in \langle S, \mathcal{L}(S), \ldots, \mathcal{L}^{N - 1}(S) \rangle \subset \widetilde{J}.
  \]
  If $S$ were divisible by $u_1^{(h)}$, then $S / u_1^{(h)} \in \widetilde{J}$ would have the same number of monomials but smaller degree, this contradicts to the choice of $S$.
  Therefore, $\frac{\partial S}{\partial u_1^{(h)}}$ has fewer monomials than $S$ thus contradicting the choice of $S$.
\end{proof}

The following corollary is equivalent to Lemma~\ref{lem:nonzero_poly} but explicitly highlights that some of the entries of $\hat{\bs{\theta}}$ may be initial conditions, not only system parameters.

\begin{corollary}[{{Clarified version of~Lemma~\ref{lem:nonzero_poly}}}]
  Let $P(\bs{\mu}, \bs{x}, u, \ldots, u^{(N)})\in%
\CC [\bs{\mu}, \bs{x}]
    \{ u \}$
be nonzero. 
Then there exist nonempty
Zariski open subsets $\Theta{\color{teal}\subset}\CC^{s}$ and $U\subset \ensuremath{\cinfty}$
such that, for every $\hat{\bs{\theta}} = (\hat{\bs{\mu}}, \hat{\bs{x}}^\ast)\in\Theta$,  $\hat{u}\in U$, and the corresponding $\hat{\bs{x}} = X(\hat{\bs{\theta}}, \hat{u})$, the function $P(\hat{\bs{\mu}}, \hat{\bs{x}}, \hat{u},\ldots,(\hat{u})^{(N)})$ 
is a nonzero element of $\ensuremath{\cinfty}$.
\end{corollary}

\begin{proof}
  We apply Lemma~\ref{lem:main} to the model $\Sigma$ and the polynomial $P$ as in the statement,
   and obtain polynomials $P_1(\bs{x}, \bs{\mu})$ and $P_2(\bs{u})$.
  We define Zariski open sets $\Theta$ and $U$ by $P_1 \neq 0$ and $P_2(\bs{u})|_{t = 0} \neq 0$, respectively.
  Then the lemma implies that, for $(\hat{\bs{\mu}}, \hat{\bs{x}}^*) \in \Theta$ and $\hat{u} \in U$, $P(\hat{\bs{\mu}}, \hat{\bs{x}}, \hat{u},\ldots,(\hat{u})^{(N)})$ will be a nonzero function.
\end{proof}

\end{document}